\newcommand{\mcc}{M\textsuperscript{c}Carthy}
\newcommand{\Han}{\operatorname{Han}}
\renewcommand{\MR}[1]{}
\title[Column-row property]{Every complete Pick space satisfies the column-row property}
\author[M. Hartz]{Michael Hartz}
\address{Fachrichtung Mathematik, Universit\"at des Saarlandes, 66123 Saarbr\"ucken, Germany}
\email{hartz@math.uni-sb.de}
\thanks{The author was partially supported by a GIF grant.}
\subjclass[2010]{Primary: 46E22; Secondary: 47B32, 47L30}
\keywords{Reproducing kernel Hilbert space, Nevanlinna--Pick kernel, multiplier, column-row property,
interpolating sequence, weak product, de Branges--Rovnyak space}
\date{\today}
\begin{document}

\begin{abstract}
  In the theory of complete Pick spaces, the column-row property
  has appeared in a variety of contexts.
  We show that it is satisfied by every complete Pick space in the following strong form:
  each sequence of multipliers that induces
  a contractive column multiplication operator also induces a contractive row multiplication operator.
  In combination with known results, this yields a number of consequences.
  Firstly, we obtain multiple applications to the theory of weak product spaces,
  including factorization, multipliers and invariant subspaces.
  Secondly, there is a short proof of the characterization of interpolating
  sequences in terms of separation and Carleson measure conditions,
  independent of the solution of the Kadison--Singer problem.
  Thirdly, we find that in the theory of de Branges--Rovnyak spaces on the ball,
  the column-extreme multipliers of Jury and Martin are precisely the extreme points of the unit ball of the multiplier algebra.
\end{abstract}

\maketitle

\section{Introduction}

\subsection{Background and main result}
Given an $n$-tuple $T = (T_1,\ldots,T_n)$ of bounded linear operators on a Hilbert space $\mathcal{H}$,
one can consider the column operator
\begin{equation*}
  C =
  \begin{bmatrix}
    T_1 \\ \vdots \\ T_n
  \end{bmatrix}
  : \mathcal{H} \to \mathcal{H}^n, \quad x \mapsto
  \begin{bmatrix}
    T_1 x \\ \vdots \\ T_n x
  \end{bmatrix},
\end{equation*}
as well as the row operator
\begin{equation*}
  R =
  \begin{bmatrix}
    T_1 & \cdots & T_n
  \end{bmatrix}
  : \mathcal{H}^n \to \mathcal{H}, \quad
  \begin{bmatrix}
    x_1 \\ \vdots \\ x_n
  \end{bmatrix}
  \mapsto
  \sum_{j=1}^n T_j x_j.
\end{equation*}
In general, not much can be said about the relationship between the operator norm
of the column operator and that of the row operator. The $C^*$-identity shows that
\begin{equation*}
  \|R\| = \|R R^*\|^{1/2} = \Big\| \sum_{j=1}^n T_j T_j^* \Big\|^{1/2} \le \sqrt{n} \max_{1 \le j \le n} \|T_j\| \le \sqrt{n} \|C\|,
\end{equation*}
and similarly $\|C\| \le \sqrt{n} \|R\|$.
Easy examples using matrices with only one non-zero entry demonstrate that the factor $\sqrt{n}$ is best possible in general.

In this paper, we study the relationship between the column operator norm and the row operator norm
of tuples of multiplication operators on complete Pick spaces.
Complete Pick spaces form a class of reproducing kernel Hilbert spaces that includes the Hardy space $H^2$ on the unit disc,
the classical Dirichlet space and standard weighted Dirichlet spaces $\mathcal{D}_\alpha$ on the unit disc,
and the Sobolev space $W^2_1$ on the unit interval.
A particularly important example is the Drury--Arveson space $H^2_d$ on the unit ball $\mathbb{B}_d$ of $\mathbb{C}^d$ or of $\ell^2$,
also known as symmetric Fock space. This space is a universal complete Pick space \cite{AM00};
in addition, it plays a key role in multivariable operator theory \cite{Arveson98}.
More examples of complete Pick spaces are given by superharmonically weighted Dirichlet spaces \cite{Shimorin02}
and by certain radially weighted Besov spaces on the unit ball \cite{AHM+18a}.
We will recall the precise definition of complete Pick spaces in Subsection \ref{ss:CNP}.
Further background can be found in the book \cite{AM02}.

Given a reproducing kernel Hilbert space $\mathcal{H}$, we will write $\Mult(\mathcal{H})$ for its multiplier algebra.
Thus, $\varphi \in \Mult(\mathcal{H})$ if and only if $\varphi \cdot f \in \mathcal{H}$ whenever $f \in \mathcal{H}$.
If $\varphi \in \Mult(\mathcal{H})$, we denote the associated multiplication operator on $\mathcal{H}$ by $M_\varphi$.
The closed graph theorem implies that $M_\varphi$ is automatically bounded.

\begin{defn}
  A reproducing kernel Hilbert space $\mathcal{H}$ is said to satisfy the \emph{column-row property with constant $c \ge 1$}
  if whenever $(\varphi_n)_{n=1}^\infty$ is a sequence in $\Mult(\mathcal{H})$ with
  \begin{equation*}
    \Bigg\|
    \begin{bmatrix}
      M_{\varphi_1} \\ M_{\varphi_2} \\ \vdots
    \end{bmatrix} \Bigg\| \le 1,
  \end{equation*}
  then
  \begin{equation*}
    \|
    \begin{bmatrix}
      M_{\varphi_1} & M_{\varphi_2} & \cdots
    \end{bmatrix} \| \le c.
  \end{equation*}
\end{defn}
The key point in this definition is that the constant $c$ is independent of the length of the sequence.

In recent years, the column-row property has emerged as an important technical property
in the theory of complete Pick spaces. It plays a role in the context of corona theorems \cite{KT13,Trent04},
weak products \cite{AHM+18,CHar,JM18}, interpolating sequences \cite{AHM+17} and de Branges--Rovnyak spaces on the ball \cite{JM18b,Jury14,JM16}.
We will review some of these connections below.

If $\mathcal{H}$ is the Hardy space $H^2$ on the unit disc $\mathbb{D}$, then the norm in $\mathcal{H}$ can be expressed
as an $L^2$-norm, from which it easily follows that for any sequence of multipliers $(\varphi_n)$ on $H^2$,
\begin{equation*}
    \|
    \begin{bmatrix}
      M_{\varphi_1} & M_{\varphi_2} & \cdots
    \end{bmatrix} \|
    = \sup_{z \in \mathbb{D}} \| (\varphi_n(z)) \|_{\ell^2} = 
    \Bigg\|
    \begin{bmatrix}
      M_{\varphi_1} \\ M_{\varphi_2} \\ \vdots
    \end{bmatrix} \Bigg\|.
\end{equation*}
In particular, $H^2$ satisfies the column-row property with constant $1$.
But the behavior of $H^2$ is not typical, as the multiplier norm is typically not
even comparable to a supremum norm. The first non-trivial example
of a complete Pick space with the column-property is due to Trent \cite{Trent04},
who showed that the Dirichlet space satisfies the column-row property with constant $\sqrt{18}$. Moreover,
he showed that there are sequences of multipliers of the Dirichlet space that give bounded row multiplication operators,
but unbounded column multiplication operators. Kidane and Trent \cite{KT13} later showed that standard weighted Dirichlet spaces
$\mathcal{D}_\alpha$ satisfy the column-row property with constant $\sqrt{10}$.
In \cite{AHM+18}, Aleman, \mcc, Richter and the author showed that standard weighted Besov spaces on the unit ball $\mathbb{B}_d$ in $\mathbb{C}^d$ satisfy
the column-row property with some finite constant, possibly depending on $d$ and on the Hilbert function space.
In particular, this implies that for each $d \in \mathbb{N}$, the Drury-Arveson space $H^2_d$ satisfies the column-row property with some finite constant $c_d$, but the case $d=\infty$ remained open. In fact, the upper bound for $c_d$ obtained in \cite{AHM+18} grows exponentially in $d$.
Also in the case of $H^2_d$ for $d \ge 2$, there are sequences of multipliers that give unbounded column, but bounded row operators;
see \cite[Subsection 4.2]{AHM+18}. In fact, the basic $\sqrt{n}$-bound between column
norm and row norm is sharp in this case; see \cite[Lemma 4.8]{CHar}.
The column-row property with some finite constant was extended to radially weighted Besov spaces on the ball in \cite{AHM+18a}. We also mention
the recent paper of Pascoe \cite{Pascoe19}, where it is shown that certain spaces satisfy the column-row property
on average.
Very recently, Augat, Jury and Pascoe \cite{AJP20} showed that the column-row property fails for the full Fock space,
which can be regarded as the natural non-commutative analogue of the Drury--Arveson space.

We now state the main result of this article.

\begin{thm}
  \label{thm:main}
  Every normalized complete Pick space satisfies the column-row property with constant $1$.
\end{thm}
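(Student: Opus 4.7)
\emph{Reduction to Drury--Arveson.} By the universality theorem of Agler and \mcc, together with its extension to operator-valued multipliers by Ball, Trent, and Vinnikov, every normalized complete Pick space $\mathcal{H}$ embeds isometrically into some Drury--Arveson space $H^2(\mathcal{F})$ (with $\mathcal{F}$ a Hilbert space of possibly infinite dimension) in such a way that operator-valued contractive multipliers on $\mathcal{H}$ are precisely the restrictions of operator-valued contractive multipliers on $H^2(\mathcal{F})$. Hence a column contractive sequence $(\varphi_n)$ on $\mathcal{H}$ lifts to a column contractive $(\tilde\varphi_n)$ on $H^2(\mathcal{F})$; if the theorem is known on $H^2(\mathcal{F})$ then $(\tilde\varphi_n)$ is row contractive there, and restricting back yields row contractivity of $(\varphi_n)$ on $\mathcal{H}$. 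It therefore suffices to prove the theorem for $\mathcal{H}=H^2(\mathcal{F})$ for an arbitrary Hilbert space $\mathcal{F}$.

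\emph{Realization.} Given a column contractive $(\varphi_n)$ on $H^2(\mathcal{F})$, view the column as a single contractive operator-valued multiplier from $H^2(\mathcal{F})$ to $H^2(\mathcal{F})\otimes\ell^2$, with symbol $\Phi(z)=(\varphi_n(z))_n\in\mathcal{B}(\mathbb{C},\ell^2)$. By the multi-variable Schur realization theorem (Ball--Trent--Vinnikov), there exist an auxiliary Hilbert space $\mathcal{E}$ and a co-isometry $V=\begin{pmatrix}A&B\\C&D\end{pmatrix}$ between suitable direct sums built from $\mathcal{E}$, $\mathcal{F}$, $\mathbb{C}$, and $\ell^2$, together with a transfer-function formula that expresses each $\varphi_n$ in terms of the entries of $V$. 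This is the structural input that every approach to the problem will need.

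\emph{From column to row, and the main obstacle.} The crux of the proof is to use $V$ to build a new co-isometry $V'$ that realizes the sequence $(\varphi_n)$ as a contractive row multiplier on $H^2(\mathcal{F})$, with constant exactly $1$. This is where I expect the main difficulty to reside. The construction must crucially exploit the commutativity of the multiplier algebra: the fully noncommutative analogue, for the full Fock space, was shown to fail by Augat--Jury--Pascoe, and the earlier commutative results (Trent; Kidane--Trent; Aleman--Hartz--\mcc--Richter) only produced constants that grow with $\dim\mathcal{F}$ and in particular blow up as $\dim\mathcal{F}\to\infty$. Achieving constant $1$ uniformly, and in particular for $\dim\mathcal{F}=\infty$, seems to require exploiting the symmetric Fock space structure of $H^2(\mathcal{F})$ in an essential way---for instance, by symmetrizing the co-isometric realization over the natural permutation-group action on tensor powers of $\mathcal{F}$, or by invoking a commutant lifting argument compatible with that symmetric structure, thereby swapping the roles of the $\mathbb{C}$-summand and the $\ell^2$-summand in $V$ without paying any dimensional cost.
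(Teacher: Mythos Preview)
Your reduction to the Drury--Arveson space is reasonable and is in the same spirit as the paper's first step, though the paper reduces further to $H^2_d\big|_F$ for \emph{finite} $F\subset\mathbb{B}_d$ and finite $d$, which is what makes the subsequent induction possible.

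The genuine gap is in your third step. You have not actually constructed the co-isometry $V'$; you have only stated that one hopes to build it by ``symmetrizing'' or by ``commutant lifting compatible with the symmetric structure,'' without specifying any concrete operation. The phrases ``seems to require'' and ``this is where I expect the main difficulty to reside'' are candid, but they also mark precisely the place where the proposal stops being a proof. Realization theory gives you a transfer-function formula for the column; it does not by itself provide any mechanism for transposing the roles of the scalar input and the $\ell^2$ output without cost, and no such mechanism is exhibited here. As you yourself note, the noncommutative analogue fails, so some specific use of commutativity is mandatory---but your sketch never names which identity or construction plays that role.

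The paper's argument is quite different from what you outline. After the finite reduction, it proceeds by induction on $|F|$ via a Schur-algorithm step: conformal automorphisms on both domain and range arrange $0\in F$ and $\Phi(0)=0$; Leech's theorem then factors $\Phi=(\mathbf{z}\otimes I_N)\Psi$ with $\Psi$ a contractive column of length $dN$. Transposing $\Phi$ turns $\Psi$ into a $d\times N$ \emph{matrix}, and the key new idea is a ``column-to-matrix'' bootstrap: using the Jury--Martin factorization of vectors in $\mathcal{H}\otimes\mathbb{C}^N$ through contractive columns, one shows that the column-row property with constant $1$ on a space already implies the stronger column-matrix inequality there. Applying this on the smaller set $F\setminus\{0\}$ (where the inductive hypothesis gives column-row with constant $1$) and then multiplying by the contractive row $\mathbf{z}$ closes the induction. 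Commutativity enters concretely in the column-to-matrix step, where one rewrites a matrix-times-column product by swapping the roles of the $\psi_{ij}$ and the $\varphi_j$; this is the specific place where the argument would break in the Fock-space setting.
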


The normalization hypothesis is not crucial and merely assumed for convenience;
see the discussion in Subsection \ref{ss:CNP}. 
As a special case of Theorem \ref{thm:main}, we also see that the constants mentioned above for the Dirichlet space,
the standard weighted Dirichlet spaces $\mathcal{D}_\alpha$ and for the Drury--Arveson $H^2_d$ can in fact be replaced by $1$,
which is important for some applications; see for instance Theorem \ref{thm:extreme_point} below.

The proof of Theorem \ref{thm:main} occupies Section \ref{sec:proof}.
A sketch of the overall structure of the argument will be provided in Subsection \ref{ss:sketch}.
In fact, the proof yields the more general ``column-matrix property'';
see Corollary \ref{cor:column-matrix} for the precise statement.
In Theorem \ref{thm:column_matrix_pairs}, we will deduce a version of the column-row property for spaces with a complete Pick factor.

\subsection{Applications}
As alluded to above, the column-row property has appeared in a number of places in recent years.
We review some of the applications that can be obtained by combining Theorem \ref{thm:main}
with known results in the literature.

First, we consider weak product spaces, which play the
role of the classical Hardy space $H^1$ in the theory
of complete Pick spaces. These spaces go back to work of Coifman, Rochberg and Weiss \cite{CRW76}.
If $\mathcal{H}$ is a reproducing kernel Hilbert space, the weak product space is
\begin{equation*}
  \mathcal{H} \odot \mathcal{H} = \Big\{ h = \sum_{n=1}^\infty f_n g_n : \sum_{n=1}^\infty \|f_n\| \|g_n\| < \infty \Big\}.
\end{equation*}
This space is a Banach function space when equipped with the norm
\begin{equation*}
  \|h\|_{\mathcal{H} \odot \mathcal{H}} = \inf \Big\{ \sum_{n=1}^\infty \|f_n\| \|g_n\|:
  h = \sum_{n=1}^\infty f_n g_n \Big\}.
\end{equation*}
(We adopt the convention that norms without subscripts denote norms in the Hilbert space $\mathcal{H}$.)
If $\mathcal{H} = H^2$, then $\mathcal{H} \odot \mathcal{H} = H^1$, with equality
of norms. In fact, in this case, each function $h \in H^1$ can be factored as $h = f g$
with $f,g \in H^2$ and $\|h\|_{\mathcal{H} \odot \mathcal{H}} = \|f\| \|g\|$.
Jury and Martin \cite[Theorem 1.3]{JM18} showed that if $\mathcal{H}$
is a normalized complete Pick space that satisfies the column-row property with constant $c$, then
every $h \in \mathcal{H} \odot \mathcal{H}$ factors as $h = f g$, with $\|f\| \|g\| \le c \|h\|_{\mathcal{H} \odot \mathcal{H}}$. Combining their result with Theorem \ref{thm:main}, we therefore obtain the following
description of $\mathcal{H} \odot \mathcal{H}$.

\begin{thm}
  \label{thm:wp_factorization}
  Let $\mathcal{H}$ be a normalized complete Pick space and let $h \in \mathcal{H} \odot \mathcal{H}$.
  Then there exist $f,g \in \mathcal{H}$ with $h = f g$ and $\|f\| \|g\| = \|h\|_{\mathcal{H} \odot \mathcal{H}}$.
  \qed
\end{thm}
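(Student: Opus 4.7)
The plan is to derive this factorization result as a direct combination of Theorem \ref{thm:main} and the Jury--Martin factorization theorem \cite[Theorem 1.3]{JM18} quoted just before the statement. Since Theorem \ref{thm:main} is allowed as a given, essentially no new work is required; the task is to verify that the constant $c = 1$ yields equality of norms rather than merely a bound.

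First I would invoke Theorem \ref{thm:main} to obtain that $\mathcal{H}$ satisfies the column-row property with constant $c = 1$. Then, applying the Jury--Martin result with this value of $c$, every $h \in \mathcal{H} \odot \mathcal{H}$ admits a factorization $h = fg$ with $f,g \in \mathcal{H}$ satisfying
\begin{equation*}
  \|f\|\,\|g\| \le \|h\|_{\mathcal{H} \odot \mathcal{H}}.
\end{equation*}

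For the reverse inequality, I would simply observe that $h = f g$ is itself a one-term representation of $h$ as a sum of products from $\mathcal{H} \times \mathcal{H}$, so by the very definition of the weak product norm as an infimum, one has
\begin{equation*}
  \|h\|_{\mathcal{H} \odot \mathcal{H}} \le \|f\|\,\|g\|.
\end{equation*}
Combining the two inequalities gives equality $\|f\|\,\|g\| = \|h\|_{\mathcal{H} \odot \mathcal{H}}$, completing the proof.

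Since the two ingredients are already in place, there is no substantive obstacle here; the only mild point to note is that the sharpening of the constant to $1$ in Theorem \ref{thm:main} is precisely what upgrades the Jury--Martin norm inequality to a norm equality, i.e.\ to an \emph{isometric} factorization rather than merely an equivalence of norms.
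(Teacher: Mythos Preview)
Your proposal is correct and matches the paper's own argument exactly: the paper also derives the theorem by combining Theorem \ref{thm:main} (column-row property with constant $1$) with Jury and Martin's factorization result \cite[Theorem 1.3]{JM18}, and the reverse inequality $\|h\|_{\mathcal{H} \odot \mathcal{H}} \le \|f\|\,\|g\|$ is immediate from the definition of the weak product norm. The paper in fact treats this as so routine that it provides no separate proof beyond the explanatory paragraph preceding the statement.
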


The multiplier algebra $\Mult(\mathcal{H} \odot \mathcal{H})$ of the weak product space
is the algebra of all functions that pointwise multiply $\mathcal{H} \odot \mathcal{H}$ into itself,
equipped with the norm of the multiplication operator.
In the case of $H^2$, it is easy to see that $\Mult(H^2) = H^\infty = \Mult(H^1)$,
with equality of norms.
Richter and Wick showed that if $\mathcal{H}$ is a first order Besov space on $\mathbb{B}_d$,
then $\Mult(\mathcal{H} \odot \mathcal{H}) = \Mult(\mathcal{H})$, with equivalence of norms \cite{RW16}.
For normalized complete Pick spaces $\mathcal{H}$, a description of
$\Mult(\mathcal{H} \odot \mathcal{H})$ in terms of column multiplication operators
of $\mathcal{H}$ was obtained by Clou\^atre and the author in \cite{CHar}.
As observed there, in the presence of the column-row property, this leads to the equality
$\Mult(\mathcal{H} \odot \mathcal{H}) = \Mult(\mathcal{H})$. Thus, combining \cite[Corollary 1.3]{CHar}
with Theorem \ref{thm:main}, we obtain the following result.

\begin{thm}
  \label{thm:wp_mult}
  Let $\mathcal{H}$ be a normalized complete Pick space. Then $\Mult(\mathcal{H} \odot \mathcal{H}) = \Mult(\mathcal{H})$ and
  \begin{equation*}
    \|\varphi\|_{\Mult(\mathcal{H} \odot \mathcal{H})} = \|\varphi\|_{\Mult(\mathcal{H})}
  \end{equation*}
  for all $\varphi \in \Mult(\mathcal{H})$. \qed
\end{thm}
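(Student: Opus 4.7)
The plan is to prove the two inclusions with matching norm bounds separately. For the easy inclusion $\Mult(\mathcal{H}) \subseteq \Mult(\mathcal{H} \odot \mathcal{H})$, I would argue directly from the definition of the weak product norm: given $\varphi \in \Mult(\mathcal{H})$ and any factorization $h = \sum_n f_n g_n$ of an element of $\mathcal{H} \odot \mathcal{H}$, one has $\varphi h = \sum_n (\varphi f_n) g_n$ with
\[
\sum_n \|\varphi f_n\|\,\|g_n\| \le \|\varphi\|_{\Mult(\mathcal{H})} \sum_n \|f_n\|\,\|g_n\|,
\]
so taking an infimum over representations yields $\|\varphi h\|_{\mathcal{H} \odot \mathcal{H}} \le \|\varphi\|_{\Mult(\mathcal{H})} \|h\|_{\mathcal{H} \odot \mathcal{H}}$, hence $\varphi \in \Mult(\mathcal{H} \odot \mathcal{H})$ with $\|\varphi\|_{\Mult(\mathcal{H} \odot \mathcal{H})} \le \|\varphi\|_{\Mult(\mathcal{H})}$.

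For the reverse inclusion and the matching norm bound, which carries the actual content, my plan is simply to feed Theorem \ref{thm:main} into \cite[Corollary 1.3]{CHar}. That corollary characterizes $\Mult(\mathcal{H} \odot \mathcal{H})$ in terms of column multiplication norms on $\mathcal{H}$, and, as the paragraph preceding the theorem makes explicit, it reduces to the isometric identity $\Mult(\mathcal{H} \odot \mathcal{H}) = \Mult(\mathcal{H})$ as soon as $\mathcal{H}$ satisfies the column-row property with constant $1$. Theorem \ref{thm:main} supplies precisely this hypothesis for every normalized complete Pick space.

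No genuine obstacle arises in the argument, which is a bookkeeping combination of two independent results. The one point worth emphasizing is that the sharpness of the constant $1$ in Theorem \ref{thm:main} is essential: if one only had the column-row property with some larger constant $c$, the Clou\^atre--Hartz characterization would yield only the equivalence $\|\varphi\|_{\Mult(\mathcal{H} \odot \mathcal{H})} \le \|\varphi\|_{\Mult(\mathcal{H})} \le c\,\|\varphi\|_{\Mult(\mathcal{H} \odot \mathcal{H})}$ rather than the equality of norms asserted in the theorem. This is exactly why the earlier results establishing the column-row property with finite but larger constants on specific examples (Dirichlet, $\mathcal{D}_\alpha$, $H^2_d$) are not sufficient to conclude Theorem \ref{thm:wp_mult}.
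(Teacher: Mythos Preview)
Your proposal is correct and matches the paper's own argument: the theorem is stated with a \qed and justified in the preceding paragraph precisely by combining Theorem~\ref{thm:main} with \cite[Corollary 1.3]{CHar}. Your explicit verification of the easy inclusion and your remark on the necessity of the constant $1$ are accurate elaborations of what the paper leaves implicit.
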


A subspace $\mathcal{M}$ of a Banach function space $\mathcal{B}$ is said to be multiplier
invariant if $\varphi \cdot f \in \mathcal{M}$ whenever $f \in \mathcal{M}$ and $\varphi$ is a function
that multiplies $\mathcal{B}$ into itself.
Multiplier invariant subspaces of $H^2$ and of $H^1$ are described by Beurling's theorem; in particular, they are in $1$-to-$1$ correspondence. This $1$-to-$1$ correspondence
was extended to complete Pick spaces satisfying the column-row property in \cite{AHM+18}.
Combining \cite[Theorem 3.7]{AHM+18} with Theorem \ref{thm:main}, we obtain the following result.

\begin{thm}
  Let $\mathcal{H}$ be a normalized complete Pick space. Then the mappings
    \begin{align*}
      \cM &\mapsto \ol{\cM}^{\cH \odot \cH}  \\
      \cN \cap \cH &\mapsfrom \cN.
    \end{align*}
  establish a bijection between closed multiplier invariant subspaces $\mathcal{M}$ of $\mathcal{H}$
  and closed multiplier invariant subspaces $\mathcal{N}$ of $\mathcal{H} \odot \mathcal{H}$.
  \qed
\end{thm}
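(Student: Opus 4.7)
The plan is to reduce directly to Theorem~3.7 of \cite{AHM+18}, which establishes precisely the stated bijection for any normalized complete Pick space satisfying the column-row property with some finite constant. Theorem \ref{thm:main} supplies this hypothesis (with constant $1$) for every normalized complete Pick space, so the result follows immediately. To make the role of the column-row property visible, I would describe the overall structure of the argument as below.

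First I would verify that both maps are well defined. The closure $\overline{\mathcal{M}}^{\mathcal{H} \odot \mathcal{H}}$ is manifestly closed, and by Theorem \ref{thm:wp_mult} the equality $\Mult(\mathcal{H}) = \Mult(\mathcal{H} \odot \mathcal{H})$ shows that multiplier invariance is preserved under $\mathcal{H} \odot \mathcal{H}$-closure. Dually, $\mathcal{N} \cap \mathcal{H}$ inherits closedness and multiplier invariance from $\mathcal{N}$ via the continuous inclusion $\mathcal{H} \hookrightarrow \mathcal{H} \odot \mathcal{H}$, which comes from the trivial factorization $f = f \cdot 1$. Next one checks the two compositions. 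The inclusions $\mathcal{M} \subseteq \overline{\mathcal{M}}^{\mathcal{H} \odot \mathcal{H}} \cap \mathcal{H}$ and $\overline{\mathcal{N} \cap \mathcal{H}}^{\mathcal{H} \odot \mathcal{H}} \subseteq \mathcal{N}$ are formal; the substance lies in the reverse inclusions.

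The main obstacle is the density inclusion $\mathcal{N} \subseteq \overline{\mathcal{N} \cap \mathcal{H}}^{\mathcal{H} \odot \mathcal{H}}$, and this is where the column-row property becomes indispensable. Given $h \in \mathcal{N}$, Theorem \ref{thm:wp_factorization} (itself a consequence of Theorem \ref{thm:main}) produces a factorization $h = f g$ with $f,g \in \mathcal{H}$. One then regularizes by choosing multipliers $\varphi_k$ so that $\varphi_k h \to h$ in $\mathcal{H} \odot \mathcal{H}$ and $\varphi_k g \in \mathcal{H}$; multiplier invariance of $\mathcal{N}$ places the approximants $\varphi_k h$ in $\mathcal{N}$, while the factored form $\varphi_k h = f \cdot (\varphi_k g)$ combined with the multiplier property places them in the appropriate subspace intersected with $\mathcal{H}$. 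The reverse inclusion $\overline{\mathcal{M}}^{\mathcal{H} \odot \mathcal{H}} \cap \mathcal{H} \subseteq \mathcal{M}$ proceeds along the same circle of ideas. Without Theorem \ref{thm:main}, neither the factorization nor the identification of multiplier algebras is available, and the approximation strategy collapses at the first step; this is why the bijection could previously be established only for spaces where the column-row property was known.
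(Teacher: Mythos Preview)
Your reduction to \cite[Theorem~3.7]{AHM+18} combined with Theorem~\ref{thm:main} is exactly the paper's proof; the theorem is stated there with a \qed\ immediately following the citation, and nothing more. One caveat about your supplementary sketch: writing $\varphi_k h = f\cdot(\varphi_k g)$ with $f,\varphi_k g \in \mathcal{H}$ only places $\varphi_k h$ in $\mathcal{H}\odot\mathcal{H}$, not in $\mathcal{H}$, so this step does not yet yield $\varphi_k h \in \mathcal{N}\cap\mathcal{H}$; the argument in \cite{AHM+18} runs somewhat differently, but this does not affect the correctness of your actual reduction.
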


In Section \ref{sec:applications}, we will collect a few more applications to weak product spaces.

Let $\mathcal{H}$ be normalized complete Pick space of functions on $X$. A sequence $(z_n)$ in $X$
is said to be an \emph{interpolating sequence}  for $\Mult(\mathcal{H})$ if the evaluation map
\begin{equation*}
  \Mult(\mathcal{H}) \to \ell^\infty, \quad \varphi \mapsto (\varphi(z_n)),
\end{equation*}
is surjective. Interpolating sequences for $H^\infty$ were characterized by Carleson
in terms of what are now known as Carleson measure and separation conditions.
In \cite{AHM+17}, Carleson's theorem was extended by Aleman, \mcc, Richter and the author to normalized complete Pick spaces,
but the proof required the solution of the Kadison--Singer problem due to Marcus, Spielman
and Srivastava \cite{MSS15}. It was also observed in \cite{AHM+17}
that in the presence of the column-row property, a simpler proof that is independent
of the theorem of Marcus, Spielman and Srivastava can be given.
For the convenience of the reader, we provide the essential part of the argument in which the column-row property enters in Section \ref{sec:applications}.

Our final application of Theorem \ref{thm:main} concerns de Branges--Rovnyak spaces on the ball.
Classical de Branges--Rovnyak spaces on the unit disc play an important role
in function theory and operator theory; see the book \cite{Sarason94} for background.
Much of the classical theory was extended to the multivariable setting by Jury \cite{Jury14}
and by Jury and Martin \cite{JM18b,JM16}.
If $b$ is an element of the closed unit ball of $\Mult(H^2_d)$, the space $\mathcal{H}(b)$
is the reproducing kernel Hilbert space on $\mathbb{B}_d$ with reproducing kernel
\begin{equation*}
  \frac{1 - b(z) \overline{b(w)}}{1 - \langle z,w \rangle}.
\end{equation*}
If $d=1$, then $\Mult(H^2_d) = H^\infty$, and we recover the de Branges--Rovnyak spaces on the unit disc.
A key feature of the classical theory is a qualitative difference in the behavior of $\mathcal{H}(b)$,
depending on whether or not $b$ is an extreme point of the unit ball of $H^\infty$.
The results of \cite{JM18b,Jury14,JM16} show a similar dichotomy in the multivariable setting,
depending on whether or not $b$ is column extreme. Here, a contractive multiplier $b$ of a reproducing kernel Hilbert space $\mathcal{H}$
is said to be column extreme if there does not exist $a \in \Mult(\mathcal{H}) \setminus \{0\}$ so that
\begin{equation*}
  \Big\|
  \begin{bmatrix}
    M_b \\ M_a
  \end{bmatrix} \Big\| \le 1.
\end{equation*}
In \cite{JM16}, Jury and Martin showed that every extreme point of the closed unit ball of $\Mult(H^2_d)$ is column extreme,
and they asked if the converse holds. Combining Theorem \ref{thm:main} with a result of Jury and Martin, we obtain
a positive answer.

\begin{thm}
  \label{thm:extreme_point}
  Let $\mathcal{H}$ be a normalized complete Pick space and let $b$ belong to the closed unit ball of $\Mult(\mathcal{H})$.
  Then $b$ is an extreme point of the closed unit ball of $\Mult(\mathcal{H})$ if and only if $b$ is column extreme.
\end{thm}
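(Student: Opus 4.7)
My plan is to prove the two directions of the equivalence separately. The implication column extreme $\Rightarrow$ extreme is a direct consequence of the parallelogram identity in $\mathcal{H}$ and requires nothing beyond standard Hilbert space reasoning; the converse implication extreme $\Rightarrow$ column extreme is the substantive part and relies on Theorem \ref{thm:main} together with the result of Jury and Martin from \cite{JM16}.

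For the direction column extreme $\Rightarrow$ extreme, I would proceed by contrapositive. Assume $b$ is not extreme, so that there exists $a \in \Mult(\mathcal{H}) \setminus \{0\}$ with $\|M_{b+a}\| \le 1$ and $\|M_{b-a}\| \le 1$. I claim the same $a$ witnesses that $b$ is not column extreme. Indeed, for every $f \in \mathcal{H}$, the parallelogram identity in $\mathcal{H}$ applied to $bf$ and $af$ gives
\begin{equation*}
  \|(b+a)f\|^2 + \|(b-a)f\|^2 = 2\|bf\|^2 + 2\|af\|^2 \le 2\|f\|^2,
\end{equation*}
so $\|bf\|^2 + \|af\|^2 \le \|f\|^2$, and taking the supremum over unit vectors $f$ yields $\|\begin{bmatrix} M_b \\ M_a \end{bmatrix}\| \le 1$. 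Thus $b$ is not column extreme, and this half of the argument is independent of Theorem \ref{thm:main}.

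For the converse direction, I would invoke the result of Jury and Martin from \cite{JM16}, which proves extreme $\Rightarrow$ column extreme in $\Mult(H^2_d)$. Their argument depends crucially on the column-row property with constant exactly $1$, a structural ingredient that was not previously available in the full generality of normalized complete Pick spaces. Theorem \ref{thm:main} now supplies this property with constant $1$ for every such space, so the Jury--Martin argument can be carried out in this general setting, yielding extreme $\Rightarrow$ column extreme.

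The main obstacle is that Theorem \ref{thm:main} alone cannot be used formally to replace Jury and Martin's construction. Starting from $\|\begin{bmatrix} M_b \\ M_a \end{bmatrix}\| \le 1$ with $a \neq 0$, Theorem \ref{thm:main} provides the companion row bound $\|\begin{bmatrix} M_b & M_a \end{bmatrix}\| \le 1$, but a direct combination of these two estimates only yields $\|M_{b \pm a}\| \le \sqrt{2}$, which is insufficient to certify that $b$ fails to be extreme. Hence one must follow the delicate Jury--Martin construction, which produces a perturbation $a' \neq 0$ satisfying $\|M_{b \pm a'}\| \le 1$. The role of Theorem \ref{thm:main} is to ensure that each step of this construction, originally executed in $\Mult(H^2_d)$, remains valid in an arbitrary normalized complete Pick space, thereby turning a theorem about the Drury--Arveson space into one about all such spaces.
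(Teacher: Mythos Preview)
Your treatment of the implication column extreme $\Rightarrow$ extreme via the parallelogram identity is correct and is essentially the paper's argument in different clothing; the paper uses a unitary $2\times 2$ matrix factorization to the same effect.

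The converse direction is where your proposal has a genuine gap. You correctly note that once Theorem~\ref{thm:main} produces the row bound from the column bound, a naive combination only yields $\|M_{b\pm a}\|\le\sqrt{2}$, which is insufficient. Your remedy is to invoke an unspecified ``delicate Jury--Martin construction'' from \cite{JM16} and assert that Theorem~\ref{thm:main} now validates each of its steps in general. But you never say what that construction is, and your historical premise is off: the column-row property with constant exactly $1$ was not known for $H^2_d$ (with $d\ge 2$) prior to the present paper, so \cite{JM16} cannot have relied on it in the way you describe. As it stands, your second direction is a pointer to a proof rather than a proof.

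What the paper actually supplies is a concrete operator-theoretic lemma (Lemma~\ref{lem:JM_extreme}, credited to Jury--Martin): if the column with entries $B,A$ and the row with entries $B,A$ are both contractions, then $\|B\pm\tfrac{1}{2}A^2\|\le 1$; the proof is a three-line matrix factorization. Applying this with $B=M_b$ and $A=M_a$, after invoking Theorem~\ref{thm:main} to pass from the column bound to the row bound, gives $\|b\pm\tfrac{1}{2}a^2\|_{\Mult(\mathcal{H})}\le 1$. Since $\Mult(\mathcal{H})$ is an algebra of functions with no nonzero nilpotents, $a^2\neq 0$, so $b$ is not extreme. This is precisely the idea missing from your argument: the perturbation $a'$ you are looking for is $\tfrac{1}{2}a^2$.
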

This result will also be proved in Section \ref{sec:applications}.
We remark that the proof crucially uses the fact that the column-row property holds with constant $1$, as opposed
to some larger constant.

In the final section, we exhibit counterexamples to some potential strengthenings of Theorem \ref{thm:main}.
In particular, we put the main results in the context of operator spaces and
record the simple observation that the complete version of Theorem \ref{thm:main} fails,
even though some matrix versions of Theorem \ref{thm:main} hold.
Moreover, we show that Theorem \ref{thm:main} may fail in the absence of the complete Pick property.
In fact, we construct a reproducing kernel Hilbert space of holomorphic functions on the unit disc
that does not satisfy the column-row property with constant $1$ (and is not a complete Pick space).
Finally, we mention some open questions.

\subsection*{Acknowledgements}
The author is grateful to Alexandru Aleman, John \mcc\ and Stefan Richter, as well as to James Pascoe, for valuable discussions at several occasions.
Moreover, he is indebted to Mike Jury and Rob Martin for sharing Lemma \ref{lem:JM_extreme}.

\section{Preliminaries}
\label{sec:prelim}

\subsection{Kernels and multipliers}

We briefly recall some preliminaries from the theory of reproducing kernel Hilbert spaces.
For more background, the reader is referred to the books \cite{AM02,PR16}.
A reproducing kernel Hilbert space is a Hilbert space $\mathcal{H}$ of complex valued functions
on a set $X$ such that for each $w \in X$, the evaluation functional
\begin{equation*}
  \mathcal{H} \to \mathbb{C}, \quad f \mapsto f(w),
\end{equation*}
is bounded.
The \emph{reproducing kernel}  of $\mathcal{H}$ is the unique function $K: X \times X \to \mathbb{C}$
satisfying
\begin{equation*}
  \langle f, K(\cdot,w) \rangle_{\mathcal{H}}  = f(w)
\end{equation*}
for all $w \in X$ and $f \in \mathcal{H}$.
We will assume for simplicity that all reproducing kernel Hilbert spaces are separable.

A function $\varphi: X \to \mathbb{C}$ is said to be a \emph{multiplier} of $\mathcal{H}$ if
$\varphi \cdot f \in \mathcal{H}$ whenever $f \in \mathcal{H}$. We write $\Mult(\mathcal{H})$
for the algebra of all multipliers of $\mathcal{H}$.
More generally, if $\mathcal{E}$ is an auxiliary Hilbert space, we can think
of elements of $\mathcal{H} \otimes \mathcal{E}$ as $\mathcal{E}$-valued functions on $X$.
If $\mathcal{K}$ is another reproducing kernel Hilbert space on $X$ and if
$\mathcal{F}$ is another auxiliary Hilbert space, we define
$\Mult(\mathcal{H} \otimes \mathcal{E}, \mathcal{K} \otimes \mathcal{F})$
to be the space of all $B(\mathcal{E},\mathcal{F})$-valued functions on $X$
that pointwise multiply $\mathcal{H}  \otimes \mathcal{E}$ into $\mathcal{K} \otimes \mathcal{F}$.
If $\mathcal{E}$ and $\mathcal{F}$ are understood from context, we simply call elements of $\Mult(\mathcal{H} \otimes \mathcal{E}, \mathcal{H} \otimes \mathcal{F})$
multipliers of $\mathcal{H}$.
The closed graph theorem easily implies that every element of $\Mult(\mathcal{H} \otimes \mathcal{E}, \mathcal{K} \otimes \mathcal{F})$ induces a bounded multiplication operator $M_{\Phi}: \mathcal{H} \otimes \mathcal{E} \to \mathcal{K} \otimes \mathcal{F}$.
The \emph{multiplier norm} of $\Phi$ is the operator norm of $M_{\Phi}$.
In particular, we say that $\Phi$ is a \emph{contractive multiplier} if $M_\Phi$
is a contraction, i.e.\ if $\|M_\Phi\| \le 1$.
We write $\Mult_1(\mathcal{H} \otimes \mathcal{E}, \mathcal{K} \otimes \mathcal{F})$
for the closed unit ball of $\Mult(\mathcal{H} \otimes \mathcal{E}, \mathcal{K} \otimes \mathcal{F})$,
i.e.\ for the set of all contractive multipliers from $\mathcal{H} \otimes \mathcal{E}$ into $\mathcal{K} \otimes \mathcal{F}$.

An element $\Phi \in \Mult(\mathcal{H} \otimes \mathbb{C}^M, \mathcal{H} \otimes \mathbb{C}^N)$
can be identified with an $N \times M$ matrix of elements $\varphi_{i j} \in \Mult(\mathcal{H})$,
and
\begin{equation*}
  \|\Phi\|_{\Mult(\mathcal{H} \otimes \mathbb{C}^M, \mathcal{H} \otimes \mathbb{C}^N)}
  =  \bigg\|
  \begin{bmatrix}
    M_{\varphi_{1 1}} & \cdots & M_{\varphi_{1M}} \\
    \vdots & \ddots & \vdots \\
    M_{\varphi_{N1}} & \cdots & M_{\varphi_{NM}}
  \end{bmatrix} \bigg\|.
\end{equation*}
In particular, columns of $N$ multiplication operators are identified with elements
of $\Mult(\mathcal{H}, \mathcal{H} \otimes \mathbb{C}^N)$, and rows of $M$ multiplication
operators are identified with elements of $\Mult(\mathcal{H} \otimes \mathbb{C}^M, \mathcal{H})$.

A function $L: X \times X \to B(\mathcal{E})$ is said to be \emph{positive} if for all $n \in \mathbb{N}$
and all $z_1,\ldots,z_n \in X$, the operator matrix
\begin{equation}
  \label{eqn:kernel_matrix_prelim}
  \big[ L(z_i,z_j) \big]_{i,j=1}^n
\end{equation}
defines a positive operator on $\mathcal{E}^n$.
In this case, we write $L \ge 0$.
It is easy to see that in this definition, it suffices to consider distinct points $z_i$.
In particular, if $X$ itself consists of $n$ points, positivity of $L$ can be checked by considering the single operator matrix \eqref{eqn:kernel_matrix_prelim}.

We will heavily use the following well-known characterization of multipliers, which is essentially contained in
\cite[Theorem 6.28]{PR16}.

\begin{lem}
  \label{lem:multipliers_basic}
  Let $\mathcal{H}$ and $\mathcal{K}$ be reproducing kernel Hilbert spaces of functions on $X$
  with reproducing kernels $K$ and $L$, respectively.
  Let $\mathcal{E},\mathcal{F}$ be auxiliary Hilbert spaces.
  Then the following assertions are equivalent for a function $\Phi: X \to B(\mathcal{E},\mathcal{F})$:
  \begin{enumerate}[label=\normalfont{(\roman*)}]
    \item $\Phi \in \Mult_1(\mathcal{H} \otimes \mathcal{E}, \mathcal{K} \otimes \mathcal{F})$.
    \item The function
      \begin{equation*}
        X \times X \to B(\mathcal{E}), \quad (z,w) \mapsto
        K_2(z,w) I_{\mathcal{E}} - K_1(z,w) \Phi(z) \Phi(w)^*,
      \end{equation*}
      is positive.
  \end{enumerate}
\end{lem}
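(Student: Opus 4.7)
The strategy is the standard duality between multiplication operators and their action on kernel sections. The key computation is that, whenever $M_{\Phi}$ is bounded, its adjoint acts on kernel sections by the formula
$$M_{\Phi}^{*}\bigl(L(\cdot,w) \otimes \eta\bigr) = K(\cdot,w) \otimes \Phi(w)^{*}\eta,$$
as one sees by pairing both sides against an elementary tensor $f \otimes \xi$ and applying the reproducing property in $\mathcal{H}$ and $\mathcal{K}$.

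For the direction (i) $\Rightarrow$ (ii), I would start from the contractivity $I - M_{\Phi} M_{\Phi}^{*} \ge 0$ on $\mathcal{K} \otimes \mathcal{F}$ and evaluate the associated quadratic form on finite linear combinations $g = \sum_{j} L(\cdot,w_{j}) \otimes \eta_{j}$. Using the adjoint formula and the reproducing identity, $\langle (I - M_{\Phi} M_{\Phi}^{*})g,g\rangle$ collapses to the scalar double sum
$$\sum_{i,j} \bigl\langle \bigl(L(w_{i},w_{j}) I_{\mathcal{F}} - K(w_{i},w_{j}) \Phi(w_{i}) \Phi(w_{j})^{*}\bigr) \eta_{j}, \eta_{i} \bigr\rangle \ge 0,$$
which is precisely the block-positivity asserted in (ii).

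For the converse (ii) $\Rightarrow$ (i), the obstacle is that $\Phi$ is not yet known to be a multiplier, so $M_{\Phi}$ has no initial meaning. My plan is to define a linear map $T$ on the algebraic span of kernel sections in $\mathcal{K} \otimes \mathcal{F}$ by
$$T\Bigl(\sum_{j} L(\cdot,w_{j}) \otimes \eta_{j}\Bigr) := \sum_{j} K(\cdot,w_{j}) \otimes \Phi(w_{j})^{*}\eta_{j},$$
and to run the Gram-matrix calculation from the previous paragraph in reverse: the positivity hypothesis then yields $\|Tg\|^{2} \le \|g\|^{2}$ for every such $g$. This single inequality simultaneously guarantees well-definedness of $T$ (the output vanishes whenever the argument does) and contractivity, so $T$ extends to a contraction on $\mathcal{K} \otimes \mathcal{F}$. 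The adjoint $T^{*}: \mathcal{H} \otimes \mathcal{E} \to \mathcal{K} \otimes \mathcal{F}$, when tested against a kernel section $L(\cdot,w) \otimes \eta$ and an arbitrary $f \in \mathcal{H} \otimes \mathcal{E}$, satisfies $\langle T^{*}f, L(\cdot,w) \otimes \eta\rangle = \langle \Phi(w)f(w), \eta\rangle$, which identifies $T^{*}f$ with the pointwise product $\Phi \cdot f$ and exhibits $\Phi$ as a contractive multiplier.

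The main obstacle is this well-definedness step for $T$: kernel sections at different base points are generally linearly dependent via relations specific to the kernel $L$, and a priori there is no reason for these relations to persist after inserting the twists $\Phi(w_{j})^{*}$. The positivity of the block kernel is exactly what is needed to transport relations in $\mathcal{K} \otimes \mathcal{F}$ to relations in $\mathcal{H} \otimes \mathcal{E}$; once this is in hand, the remainder is routine functional analysis.
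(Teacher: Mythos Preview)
Your argument is correct and is precisely the standard proof of this well-known characterization. The paper does not supply its own proof of this lemma; it merely cites \cite[Theorem 6.28]{PR16} as the source, so there is nothing to compare against beyond noting that your approach is the expected one found in that reference. One minor remark: the statement as printed in the paper contains a typo (the kernel should be $B(\mathcal{F})$-valued with $I_{\mathcal{F}}$, not $B(\mathcal{E})$-valued), and you correctly work with $\eta_j \in \mathcal{F}$ and $I_{\mathcal{F}}$ in your computation.
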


Moreover, we will frequently use the following basic fact, which follows directly
from the definition of positivity. For easier reference, we state it as a lemma.

\begin{lem}
  \label{lem:positive_conjugation}
  Let $\mathcal{E},\mathcal{F}$ be Hilbert spaces, let $L: X \times X \to B(\mathcal{E})$ be positive
  and let $f: X \to B(\mathcal{F},\mathcal{E})$ be a function. Then
  \begin{equation*}
    X \times X \to B(\mathcal{F}), \quad (z,w) \mapsto f(z) L(z,w) f(w)^*,
  \end{equation*}
  is positive as well. \qed
\end{lem}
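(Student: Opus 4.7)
The plan is to verify positivity directly from the definition in Section~\ref{sec:prelim}, which says that a $B(\mathcal{F})$-valued function $M$ on $X \times X$ is positive if and only if for every $n$ and every choice of points $z_1,\ldots,z_n \in X$, the operator matrix $[M(z_i,z_j)]_{i,j=1}^n$ defines a positive operator on $\mathcal{F}^n$. So fix $n$ and points $z_1,\ldots,z_n \in X$; I want to show that $[f(z_i) L(z_i,z_j) f(z_j)^*]_{i,j=1}^n \ge 0$.

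The cleanest way is to view this matrix as a conjugation. Let $F: \mathcal{E}^n \to \mathcal{F}^n$ denote the block diagonal operator with diagonal entries $f(z_1),\ldots,f(z_n)$; then its adjoint $F^*: \mathcal{F}^n \to \mathcal{E}^n$ is block diagonal with entries $f(z_i)^*$. A direct block-matrix computation gives
\begin{equation*}
  \bigl[ f(z_i) L(z_i,z_j) f(z_j)^* \bigr]_{i,j=1}^n
  \;=\; F \cdot \bigl[ L(z_i,z_j) \bigr]_{i,j=1}^n \cdot F^*.
\end{equation*}
The middle factor is a positive operator on $\mathcal{E}^n$ by the positivity of $L$, and conjugation $A \mapsto F A F^*$ preserves positivity. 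Hence the left-hand side is positive on $\mathcal{F}^n$, as needed.

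If one prefers to avoid introducing $F$, the same argument can be phrased on vectors: for any $v_1,\ldots,v_n \in \mathcal{F}$, setting $u_i = f(z_i)^* v_i \in \mathcal{E}$ gives
\begin{equation*}
  \sum_{i,j=1}^n \bigl\langle f(z_i) L(z_i,z_j) f(z_j)^* v_j,\, v_i \bigr\rangle_{\mathcal{F}}
  \;=\; \sum_{i,j=1}^n \bigl\langle L(z_i,z_j) u_j,\, u_i \bigr\rangle_{\mathcal{E}} \;\ge\; 0,
\end{equation*}
the last inequality being the positivity of $L$ tested against $u_1,\ldots,u_n$. There is no real obstacle here; the statement is a straightforward unpacking of the definition and merely isolates a bookkeeping lemma to be reused throughout the paper.
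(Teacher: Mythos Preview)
Your proof is correct and is exactly the direct verification from the definition that the paper alludes to; the paper itself gives no explicit argument (the lemma is stated with a \qed), so there is nothing further to compare. One minor remark: for the compositions to make sense as written, $f(z)$ must act $\mathcal{E} \to \mathcal{F}$ (i.e., $f: X \to B(\mathcal{E},\mathcal{F})$), which is how the lemma is actually used throughout the paper and how your block-diagonal $F$ implicitly treats it.
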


\subsection{Complete Pick spaces and normalization}
\label{ss:CNP}
Complete Pick spaces are defined in terms of an interpolation property
for multipliers that recovers the classical Pick interpolation theorem in the case of the Hardy space $H^2$.
If $\mathcal{H}$ is a reproducing kernel Hilbert space of functions on $X$ with kernel $K$ and $Y \subset X$,
then $\mathcal{H} \big|_Y$ denotes the reproducing kernel Hilbert space on $Y$
with kernel $K \big|_{Y \times Y}$. The restriction map $\mathcal{H} \to \mathcal{H} \big|_Y$
is a co-isometry (see \cite[Corollary 5.8]{PR16}).
On the level of multipliers, the restriction map $\Mult(\mathcal{H}) \to \Mult(\mathcal{H} \big|_Y)$
is a complete contraction (``complete'' means that all induced maps on matrices of multipliers
also have the property in question).
The space $\mathcal{H}$ is said to be a \emph{complete Pick space} if restriction
$\Mult(\mathcal{H}) \to \Mult(\mathcal{H} \big|_Y)$ is a complete exact quotient map for all
finite sets $Y \subset X$ (``exact quotient map'' means that the closed unit ball is mapped onto the closed unit ball).
The multiplier characterization of Lemma \ref{lem:multipliers_basic} recovers the familiar formulation involving Pick matrices.
(A weak-$*$ compactness argument shows that the condition ``exact complete quotient map'' could be weakened to ``quotient map''.)

Theorems of McCullough \cite{McCullough92}, Quiggin \cite{Quiggin93} and Agler and M\textsuperscript{c}Carthy \cite{AM00} give an equivalent
characterization of the complete Pick property in terms of the reproducing kernel,
which we might also take as the definition here.
The reproducing kernel Hilbert space $\mathcal{H}$ (or its kernel $K$) is said to be \emph{irreducible}
if the underlying set $X$ cannot be partitioned into two non-empty disjoint sets $X_1,X_2$ so that $K(x_1,x_2) = 0$
for all $x_1 \in X_1, x_2 \in x_2$.
The kernel $K$ of an irreducible complete Pick space satisfies $K(z,w) \neq 0$ for all $z,w \in X$;
see \cite[Lemma 1.1]{AM00}.
By Theorem 3.1 of \cite{AM00}, the space $\mathcal{H}$ is an irreducible complete Pick space if and only if
there exist a function
$\delta: X \to \mathbb{C} \setminus \{0\}$, a number $d \in \mathbb{N} \cup \{\infty\}$ and a function
$b: X \to \mathbb{B}_d$, where $\mathbb{B}_d$ denotes the open unit ball of $\mathbb{C}^d$
if $d < \infty$ and that of $\ell^2$ if $d = \infty$, so that
\begin{equation}
  \label{eqn:CNP}
  K(z,w) = \frac{\delta(z) \overline{\delta(w)}}{1 - \langle b(z), b(w) \rangle} \quad (z,w \in X).
\end{equation}
We also refer the reader to \cite{Knese19a} for a simple and elegant proof of necessity.

A kernel $K$ is \emph{normalized} at $z_0 \in X$ if $K(z,z_0) = 1$ for all $z \in X$.
Clearly, every normalized kernel is irreducible.
By a \emph{normalized complete Pick space}, we mean an irreducible complete Pick space whose kernel
is normalized at a point.
If $K$ is the reproducing kernel of a complete Pick space that is normalized at $z_0 \in X$,
then one can achieve that in \eqref{eqn:CNP},
the function $\delta$ is the constant function $1$, hence $b(z_0) = 0$; see the discussion following
\cite[Theorem 3.1]{AM00}.

Given an irreducible complete Pick space $\mathcal{H}$ on $X$ with kernel $K$ and $z_0 \in X$, one can
consider the rescaled kernel
\begin{equation*}
  L(z,w) = \frac{K(z,w) K(z_0,z_0)}{K(z,z_0) K(z_0,w)},
\end{equation*}
which is normalized at $z_0$.
The corresponding reproducing kernel Hilbert space $\mathcal{K}$
is a normalized complete Pick space whose multipliers agree with those of $\mathcal{H}$.
More precisely,
$\Mult(\mathcal{H} \otimes \mathcal{E}, \mathcal{H} \otimes \mathcal{F}) =
\Mult(\mathcal{K} \otimes \mathcal{E}, \mathcal{K} \otimes \mathcal{F})$
for all Hilbert spaces $\mathcal{E},\mathcal{F}$, with equality of norms.
This follows, for instance, from Lemma \ref{lem:multipliers_basic} and Lemma \ref{lem:positive_conjugation}.
In particular, Theorem \ref{thm:main} for normalized complete Pick spaces
implies the same result for irreducible complete Pick spaces.
The main result was formulated for normalized spaces as this has become
the standard setting, but in the proof, it is convenient to work in the slightly more flexible
class of irreducible complete Pick spaces.
More background on rescaling kernels can be found in \cite[Section 2.6]{AM02}.

\begin{rem}
  Each general complete Pick space can be decomposed as an orthogonal direct sum
  of irreducible complete Pick spaces; see \cite[Lemma 1.1]{AM00}.
  Moreover, the direct summands are reducing for all multiplication operators,
  so if each summand satisfies the column-row property, then the entire space does.
  We omit the details as the theory of complete Pick spaces is usually only developed
  in the irreducible setting.

  We remark that sometimes, irreducibility is assumed to include the condition that $K(\cdot,w_1)$ and
  $K(\cdot,w_2)$ are linearly independent if $w_1 \neq w_2$; see \cite[Definition 7.1]{AM02}.
  However, we will not make that assumption here.
\end{rem}

\section{Proof of main result}
\label{sec:proof}

\subsection{Brief outline}
\label{ss:sketch}

We briefly discuss the main ideas that go into the proof of Theorem \ref{thm:main} (the column-row property
of complete Pick spaces).
First, we use the universality of the Drury--Arveson space and a straightforward approximation argument
to reduce Theorem \ref{thm:main} to the case where the complete Pick space $\mathcal{H}$
is the restriction of the Drury--Arveson space $H^2_d$ to a finite subset of $\mathbb{B}_d$, $d < \infty$,
and the sequence of multipliers is finite.

To treat the case when $\mathcal{H} = H^2_d \big|_F$ for a finite set $F \subset \mathbb{B}_d$,
we use a variant of the Schur algorithm. Classically, the Schur algorithm can be used
to solve Pick interpolation problems on the disc; see \cite[p.8]{AM02} for a sketch.
It consists of two steps. In the first step, one applies conformal automorphisms of the disc
to the interpolation nodes and to the targets to move one node and the corresponding target to the origin.
In the second step, one factors the desired solution $f \in H^\infty$ as $f = z g$ for another
function $g \in H^\infty$. This reduces an interpolation problem with $n$ points for $f$
to an interpolation problem with $n-1$ points for $g$, and hence makes an inductive approach possible.

Our approach to proving the column-row property of $H^2_d \big|_F$ follows a similar outline,
proceeding by induction on $|F|$. Given a contractive column multiplier $\Phi$ of $H^2_d \big|_F$
with $N$ components, we apply a conformal automorphism of $\mathbb{B}_d$ to the domain
to arrange that $0 \in F$, and a conformal automorphism of $\mathbb{B}_N$ to the range
to achieve that $\Phi(0) = 0$.
In the factorization step, we use Leech's theorem to factor
\begin{equation*}
  \Phi = \Big(
  \begin{bmatrix}
    z_1 & \cdots & z_d
\end{bmatrix} \otimes I_{N} \Big) \Psi,
\end{equation*}
where $\Psi$ is a contractive column multiplier with $d N$ components.
Taking the transpose of $\Phi$ corresponds to rearranging the column $\Psi$ into a $d \times N$ matrix.
To deal with this process, we show, using a result of Jury and Martin, that the column-row
property implies the ``column-matrix property''.
Roughly speaking, this allows us to reduce the problem for $\Phi$ on the finite set $F$
to a problem for $\Psi$ on the set $F \setminus \{0\}$ with one fewer point,
which makes it possible to apply induction.

\begin{rem}
  A version of the Schur algorithm for the Drury--Arveson space appears in \cite{ABK02},
  but the results do not seem to be directly applicable to our problem.
\end{rem}

\subsection{Reductions}
\label{ss:red}

We carry out a few straightforward reductions for the proof of Theorem \ref{thm:main}.
Firstly, it is obvious that it suffices to consider finite sequences of multipliers in the definition
of the column-row property (since the constant $c=1$ is independent of the length of the sequence).

Secondly, recall that the Drury--Arveson space $H^2_d$ is the reproducing kernel Hilbert
space on the open unit ball $\mathbb{B}_d$
of a $d$-dimensional Hilbert space, where $d \in \mathbb{N} \cup \{ \infty\}$,
with reproducing kernel
\begin{equation*}
  \frac{1}{1 - \langle z,w \rangle }.
\end{equation*}
We will use the universality of the Drury--Arveson space $H^2_d$, which is essentially the representation
\eqref{eqn:CNP}, to replace
a general irreducible complete Pick space with $H^2_d$.
Moreover, an approximation argument can be used to reduce to $d < \infty$
(once again, this works if the column-row constant of $H^2_d$ is independent of $d$).
In fact, it is sufficient to consider restrictions of $H^2_d$ to finite subsets of the ball,
which yields the reduction to finite $d$ at the same time.

If $F \subset \mathbb{B}_d$, we let $H^2_d \big|_F$ be the reproducing kernel Hilbert
space on $F$ whose reproducing kernel is the restriction of the kernel of $H^2_d$ to $F \times F$.

\begin{lem}
  \label{lem:reduction_finite}
  In order to prove Theorem \ref{thm:main}, it suffices to show that for each $d \in \mathbb{N}$ and all
  finite sets $F \subset \mathbb{B}_d$, the space $H^2_d \big|_F$ satisfies the column-row property
  with constant $1$.
\end{lem}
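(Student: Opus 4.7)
The plan is to establish Lemma \ref{lem:reduction_finite} through two elementary reductions: first, that both hypothesis and conclusion of the column-row property can be read off from every finite sub-problem, and second, that a finite normalized complete Pick space can be identified with a restriction of some $H^2_d$ with $d$ finite.

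For the first reduction, I would begin with the trivial observation that since the column-row constant $c = 1$ is to be independent of the length of the sequence, it suffices by truncation to consider a finite sequence $\varphi_1, \ldots, \varphi_N \in \Mult(\mathcal{H})$. Now Lemma \ref{lem:multipliers_basic} translates the condition that the column has norm at most $1$ into the positivity of the $B(\mathbb{C}^N)$-valued kernel
\begin{equation*}
  (z, w) \mapsto K(z, w) \bigl(I_N - [\varphi_i(z) \overline{\varphi_j(w)}]_{i,j=1}^N\bigr)
\end{equation*}
on $X \times X$, and it translates the conclusion ``row norm $\le 1$'' into an analogous scalar positivity statement. Positivity of an operator-valued kernel on $X \times X$ is by definition checked on finite subsets, so each of these two conditions on $\mathcal{H}$ is equivalent to the corresponding condition on every restriction $\mathcal{H}\big|_F$, $F \subset X$ finite. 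Therefore it is enough to prove the column-row property with constant $1$ for $\mathcal{H}\big|_F$ for every finite $F \subset X$.

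For the second reduction, I would invoke the characterization \eqref{eqn:CNP} to write the kernel of $\mathcal{H}$ in the form $K(z, w) = (1 - \langle b(z), b(w) \rangle)^{-1}$ for some $d \in \mathbb{N} \cup \{\infty\}$ and some $b: X \to \mathbb{B}_d$. Fix a finite $F \subset X$. If $b(z_1) = b(z_2)$ for distinct $z_1, z_2 \in F$, then $K(\cdot, z_1) = K(\cdot, z_2)$ on $F$, so every function in $\mathcal{H}\big|_F$ and in particular every multiplier must take equal values at $z_1$ and $z_2$; identifying such points does not affect any column or row norm, so we may assume $b$ is injective on $F$. Pullback by $b$ then identifies $\mathcal{H}\big|_F$ isometrically with $H^2_d\big|_{b(F)}$, together with their multiplier algebras. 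Since $b(F)$ is a finite subset of $\mathbb{B}_d$, it spans a finite-dimensional subspace of $\mathbb{C}^d$ (resp.\ of $\ell^2$); fixing an isometric identification of this span with $\mathbb{C}^k$ for some $k \le |F|$ realizes $H^2_d\big|_{b(F)}$ as $H^2_k\big|_{F'}$ for a finite set $F' \subset \mathbb{B}_k$. Combined with the first reduction, this shows that the column-row property (with constant $1$) for $H^2_d\big|_F$ in the range $d \in \mathbb{N}$, $F \subset \mathbb{B}_d$ finite, implies Theorem \ref{thm:main}.

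I do not anticipate a real obstacle; the entire argument is formal, since the relevant positivity and multiplier conditions supplied by Lemma \ref{lem:multipliers_basic} depend only on the matrix entries $K(z, w)$ and the values $\varphi_i(z)$ at finitely many points of $X$, all of which are preserved by the identifications above. The one spot that deserves care is verifying that the pullback by $b$ genuinely transfers multiplier, column, and row norms between $\mathcal{H}\big|_F$ and $H^2_d\big|_{b(F)}$; this follows by applying Lemma \ref{lem:multipliers_basic} on both sides and comparing the corresponding kernel positivity conditions, which coincide because the Gram data agree under pullback.
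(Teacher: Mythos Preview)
Your proposal is correct and follows essentially the same route as the paper's proof: both reduce to finite sequences, invoke Lemma~\ref{lem:multipliers_basic} to test the column and row conditions on finite point sets, use the representation \eqref{eqn:CNP} to transfer to $H^2_d$ restricted to the image under $b$, handle coincidences $b(z_1)=b(z_2)$ by observing that $K(\cdot,z_1)=K(\cdot,z_2)$ forces equal multiplier values (using $1\in\mathcal{H}$), and finally pass to a finite-dimensional ball via the span of $b(F)$. The only cosmetic difference is that the paper works directly with the Pick matrices rather than phrasing things as a pullback identification of spaces.
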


\begin{proof}
  Suppose that $H^2_d \big|_F$ satisfies the column-row property with constant $1$
  for all $d \in \mathbb{N}$ and all finite sets $F \subset \mathbb{B}_d$.
  Let $\mathcal{H}$ be a normalized complete Pick space on $X$.
  Thus, the reproducing kernel $K$ of $\mathcal{H}$ is of the form
  \begin{equation*}
    K(z,w) = \frac{1}{1 - \langle b(z), b(w) \rangle }
  \end{equation*}
  for a function $b: X \to \mathbb{B}_{e}$ and a number $e \in \mathbb{N} \cup \{\infty\}$.

  Let $N \in \mathbb{N}$ and let $\Phi \in \Mult_1(\mathcal{H},\mathcal{H} \otimes \mathbb{C}^N)$ be a contractive
  column multiplier of $\mathcal{H}$.
  Our goal is to show that the transposed function $\Phi^T$ is a contractive row multiplier of $\mathcal{H}$, i.e. that \ $\Phi^T \in \Mult_1(\mathcal{H} \otimes \mathbb{C}^N, \mathcal{H})$.
  By Lemma \ref{lem:multipliers_basic}, this is equivalent to showing that
  for any finite collection of points $x_1,\ldots,x_n \in X$,
  the matrix
  \begin{equation}
    \label{eqn:cr_finite_red_goal}
    \big[ K(x_i,x_j) ( 1 - \Phi^T(x_i) (\Phi^T(x_j))^*) \big]_{i,j=1}^n
  \end{equation}
  is positive.

  Since $\Phi \in \Mult_1(\mathcal{H} , \mathcal{H} \otimes \mathbb{C}^N)$, we know
  from Lemma \ref{lem:multipliers_basic} that the matrix
  \begin{equation}
    \label{eqn:cr_finite_red}
    \big[
    K(x_i,x_j) ( I_N - \Phi(x_i) \Phi(x_j)^*) \big]_{i,j=1}^n
  \end{equation}
  is positive.
  Consider the points $b(x_1), \ldots, b(x_n) \in \mathbb{B}_e$,
  which are contained in a subspace of dimension $d \le n$.
  Thus, there exist points $\lambda_1,\ldots,\lambda_n \in \mathbb{B}_d$ so
  that
    $\langle \lambda_i, \lambda_j \rangle = \langle b(x_i), b(x_j) \rangle$
  for $1 \le i, j \le n$ and so that $\lambda_i = \lambda_j$ if and only if $b(x_i) = b(x_j)$.
  Hence,
  \begin{equation*}
    K(x_i,x_j) = \frac{1}{1 - \langle \lambda_i, \lambda_j \rangle }
  \end{equation*}
  for $1 \le i,j \le n$. Let $F= \{\lambda_1,\ldots,\lambda_n\} \subset \mathbb{B}_d$.
  We wish to define
  \begin{equation*}
    \Psi: F \to B(\mathbb{C}, \mathbb{C}^N) \text{ by } \Psi(\lambda_j) = \Phi(x_j) \text{ for } 1 \le j \le n.
  \end{equation*}
  To see that $\Psi$ is well defined, we have to check that $\Phi(x_j) = \Phi(x_i)$ if $b(x_i) = b(x_j)$.
  But if $b(x_i) = b(x_j)$, then $K(\cdot,x_i) = K(\cdot,x_j)$,
  hence $f(x_i) = f(x_j)$ for all $f \in \mathcal{H} \otimes \mathbb{C}^N$ and therefore
  $\Phi(x_i) = \Phi(x_j)$ as $1 \in \mathcal{H}$ by the normalization assumption.
  Thus, $\Psi$ is well defined.

  Having defined $\lambda_1,\ldots,\lambda_n$ and $\Psi$, we can now rewrite \eqref{eqn:cr_finite_red} as
  \begin{equation*}
    \Big[ \frac{1}{1 - \langle \lambda_i, \lambda_j \rangle } ( I_N - \Psi(\lambda_i) \Psi(\lambda_j)^*) \Big]_{i,j=1}^n \ge 0.
  \end{equation*}
  By Lemma \ref{lem:multipliers_basic}, this means that $\Psi \in \Mult_1(H^2_d \big|_F, H^2_d \big|_F \otimes \mathbb{C}^N)$.
  Since $H^2_d \big|_F$ satisfies the column-row property with constant $1$ by assumption,
  it follows that $\Psi^T \in \Mult_1(H^2_d \big|_F \otimes \mathbb{C}^N, H^2_d \big|_F)$, so that
  by the same lemma,
  \begin{equation*}
    \Big[
    \frac{1}{1 - \langle \lambda_i, \lambda_j \rangle } ( 1 - \Psi^T(\lambda_i) (\Psi^T(\lambda_j))^*) \Big]_{i,j=1}^n \ge 0.
  \end{equation*}
  Recalling the choice of $\lambda_j$ and $\Psi$, we see that the matrix in \eqref{eqn:cr_finite_red_goal} is
  positive, which finishes the proof.
\end{proof}

The proof above shows that we could assume that $|F| = d$,
but it is convenient to keep the size of $F$ and the dimension $d$ independent.

\subsection{Conformal automorphisms on the domain}

For the remainder of this section, we assume that $d \in \mathbb{N}$.
Our next task is to understand the action of biholomorphic
automorphisms of $\mathbb{B}_d$ on the domain of multipliers, which
is needed in the first step of the Schur algorithm.
Let $\Aut(\bB_d)$ denote the group of biholomorphic automorphisms of $\bB_d$;
see \cite[Section 2.2]{Rudin08} for background.
It is well known that $\Aut(\bB_d)$ acts transitively on $\bB_d$,
see \cite[Theorem 2.2.3]{Rudin08}. We require the following basic facts
about $\Aut(\bB_d)$, which can be found in \cite[Therem 2.2.5]{Rudin08}.

\begin{lem}
  \label{lem:auto_basic}
  Let $\theta \in \Aut(\bB_d)$ and let $a = \theta^{-1}(0)$.
  \begin{enumerate}[label=\normalfont{(\alph*)}]
    \item $\theta$ extends to a homeomorphism of $\ol{\bB_d}$.
    \item The identity
      \begin{equation*}
        1 - \langle \theta(z), \theta(w) \rangle = \frac{( 1- \langle a,a \rangle)( 1 - \langle z, w \rangle)}{(1 - \langle z, a \rangle)(1 - \langle a, w \rangle)}
      \end{equation*}
      holds for all $z,w \in \ol{\bB_d}$.
  \end{enumerate}
\end{lem}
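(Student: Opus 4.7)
The plan is to reduce everything to the structure theorem for $\Aut(\mathbb{B}_d)$, which says that every $\theta \in \Aut(\mathbb{B}_d)$ can be written as $\theta = U \circ \varphi_a$, where $a = \theta^{-1}(0)$, $U$ is a unitary on $\mathbb{C}^d$, and $\varphi_a$ is the Möbius-type involution interchanging $0$ and $a$. Since unitaries trivially extend to homeomorphisms of $\overline{\mathbb{B}_d}$ and preserve the inner product, both assertions (a) and (b) reduce to the case $\theta = \varphi_a$.

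For the involution $\varphi_a$, I would write down the explicit formula
\begin{equation*}
  \varphi_a(z) = \frac{a - P_a z - \sqrt{1 - \langle a, a \rangle}\, Q_a z}{1 - \langle z, a \rangle},
\end{equation*}
where $P_a$ is the orthogonal projection onto the span of $a$ and $Q_a = I - P_a$ (taking $\varphi_0 = -\operatorname{id}$, say, when $a = 0$). The denominator satisfies $|1 - \langle z, a \rangle| \geq 1 - |a| > 0$ for all $z \in \overline{\mathbb{B}_d}$, so the formula defines a rational map that is continuous on a neighborhood of $\overline{\mathbb{B}_d}$. This immediately gives a continuous extension to $\overline{\mathbb{B}_d}$, and the identity $\varphi_a \circ \varphi_a = \operatorname{id}$ (which I would verify from the formula, or take as part of the structure theorem) then shows the extension is a homeomorphism, proving (a).

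For (b), the plan is a direct computation. Expand $\langle \varphi_a(z), \varphi_a(w) \rangle$ using the formula, exploiting the orthogonality of the ranges of $P_a$ and $Q_a$ together with $P_a x = \frac{\langle x, a \rangle}{\langle a, a \rangle} a$. The numerator of $\langle \varphi_a(z), \varphi_a(w) \rangle$ breaks into a $P_a$-component, which contributes a rational expression in $\langle z, a \rangle$, $\langle a, w \rangle$, and $\langle a, a \rangle$, and a $Q_a$-component, which contributes $(1 - \langle a, a \rangle)\langle Q_a z, Q_a w \rangle = (1 - \langle a, a \rangle)(\langle z, w \rangle - \langle z, a \rangle \langle a, w \rangle / \langle a, a \rangle)$. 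Combining these over the common denominator $(1 - \langle z, a \rangle)(1 - \langle a, w \rangle)$ and subtracting from $1$, the terms involving $\langle a, a \rangle$ in the denominator cancel after collecting, and one arrives at the claimed formula.

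The main obstacle is bookkeeping in this algebraic identity: it is not conceptually hard, but one must be careful with the $P_a$/$Q_a$ decomposition and with the edge case $a = 0$. Once (b) is established, a secondary use of it with $w = z$ also reproves that $\varphi_a$ maps $\partial \mathbb{B}_d$ into itself (since the right-hand side vanishes precisely when $\langle z, z \rangle = 1$), which strengthens the argument for (a). The alternative conceptual route — appealing to the general theory that $\Aut(\mathbb{B}_d)$-action preserves the pseudo-hyperbolic kernel $K(z,w) = (1 - \langle z, w \rangle)^{-1}$ up to the standard cocycle — gives (b) without hand computation, but still requires writing down enough of $\varphi_a$ to identify the cocycle factor as $(1 - \langle z, a \rangle)(1 - \langle a, w \rangle)/(1 - \langle a, a \rangle)$.
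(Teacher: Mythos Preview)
Your proposal is correct and follows the standard argument. Note that the paper does not actually prove this lemma: it is stated as a collection of basic facts about $\Aut(\mathbb{B}_d)$ with a reference to \cite[Theorem 2.2.5]{Rudin08}, and no proof is given in the paper itself. Your sketch is precisely the proof one finds in Rudin---reduce via the decomposition $\theta = U \circ \varphi_a$ to the canonical involution, observe that the denominator $1 - \langle z, a \rangle$ is bounded away from zero on $\overline{\mathbb{B}_d}$ to get the continuous extension, and then verify the identity in (b) by a direct expansion using the $P_a/Q_a$ decomposition. So there is nothing to compare: you have supplied the proof that the paper outsources to the reference.
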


It is well known that every conformal automorphism of $\bB_d$ induces a
completely isometric composition operator on $\Mult(H^2_d)$. This can be deduced from part (b) of Lemma \ref{lem:auto_basic}. We require the following variant of this fact, which also easily follows from the same identity.

\begin{lem}
  \label{lem:conformal_domain}
  Let $F \subset \bB_d$, let $\cE,\cF$ be Hilbert spaces and let $\Phi: F \to B(\mathcal{E},\mathcal{F})$
  be a function.
  Let $\theta \in \Aut(\mathbb{B}_d)$.
  Then $\Phi$ is a contractive multiplier of $H^2_d \big|_F$ if and only if $\Phi \circ \theta$
  is a contractive multiplier of $H^2_d \big|_{\theta^{-1}(F)}$.
\end{lem}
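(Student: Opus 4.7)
The plan is to translate the multiplier contractivity into positivity of a kernel via Lemma \ref{lem:multipliers_basic}, use the explicit transformation formula from Lemma \ref{lem:auto_basic}(b) to relate the two kernels by conjugation with an invertible scalar factor, and then invoke Lemma \ref{lem:positive_conjugation} to conclude.

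More concretely, write $k(z,w) = 1/(1-\langle z,w\rangle)$ for the kernel of $H^2_d$, and let $G = \theta^{-1}(F)$. By Lemma \ref{lem:multipliers_basic}, $\Phi$ is a contractive multiplier of $H^2_d\big|_F$ if and only if the $B(\mathcal{F})$-valued kernel
\begin{equation*}
  K_F(z,w) := k(z,w)\bigl(I - \Phi(z)\Phi(w)^*\bigr) \qquad (z,w \in F)
\end{equation*}
is positive, while $\Phi\circ\theta$ is a contractive multiplier of $H^2_d\big|_G$ exactly when the analogous kernel $K_G$ on $G\times G$ built from $\Phi\circ\theta$ is positive. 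Writing $z = \theta(\tilde z)$ and $w = \theta(\tilde w)$ with $\tilde z, \tilde w \in G$, and setting $a = \theta^{-1}(0)$, Lemma \ref{lem:auto_basic}(b) yields
\begin{equation*}
  k(\theta(\tilde z), \theta(\tilde w))
  = \frac{(1-\langle \tilde z, a\rangle)(1-\langle a, \tilde w\rangle)}{(1-\langle a,a\rangle)(1-\langle \tilde z,\tilde w\rangle)},
\end{equation*}
so that
\begin{equation*}
  K_F(\theta(\tilde z), \theta(\tilde w))
  = f(\tilde z)\, K_G(\tilde z,\tilde w)\, \overline{f(\tilde w)},
\end{equation*}
where $f(\tilde z) = (1-\langle \tilde z, a\rangle)/\sqrt{1 - \langle a,a\rangle}$ is a scalar-valued function on $\overline{\bB_d}$.

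The final step is to observe that $f$ is nowhere zero on $\overline{\bB_d}$: by Cauchy--Schwarz, $|\langle \tilde z, a\rangle| \le |\tilde z|\,|a| < 1$ for every $\tilde z \in \overline{\bB_d}$, so $f$ is invertible as a function and the factors $f(\tilde z) I_\mathcal{F}$ define an invertible $B(\mathcal{F})$-valued conjugation. Applying Lemma \ref{lem:positive_conjugation} with this conjugation (and with its inverse, to reverse the implication) shows that $K_F$ is positive on $F\times F$ if and only if $K_G$ is positive on $G\times G$. Combining this with the two applications of Lemma \ref{lem:multipliers_basic} gives the claimed equivalence.

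There is no real obstacle here beyond careful bookkeeping; the only substantive point is confirming that the scalar rescaling $f$ does not vanish, which is where the constraint $a \in \bB_d$ (equivalently, $\theta$ maps into $\bB_d$) is used. All the heavy lifting has already been done in Lemma \ref{lem:auto_basic}(b), and the argument is symmetric in $\theta$ and $\theta^{-1}$, which automatically gives both directions of the equivalence.
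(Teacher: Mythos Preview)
Your proof is correct and follows essentially the same approach as the paper: translate contractivity to kernel positivity via Lemma \ref{lem:multipliers_basic}, use the identity of Lemma \ref{lem:auto_basic}(b) to relate the two kernels by a scalar conjugation, and appeal to Lemma \ref{lem:positive_conjugation}. The only cosmetic difference is that you handle both directions at once by noting the scalar factor $f$ is invertible, whereas the paper proves one direction and obtains the other by applying the argument to $\theta^{-1}$.
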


\begin{proof}
  Let $K(z,w) = \frac{1}{1 - \langle z,w \rangle}$ be the reproducing kernel of $H^2_d$.
  Suppose that $\Phi$ is a contractive multiplier of $H^2_d \big|_F$. Then Lemma \ref{lem:multipliers_basic}
  shows that
  \begin{equation*}
    (z,w) \mapsto  K(z,w)(I_{\mathcal{F}} - \Phi(z) \Phi(w)^*) \ge 0
  \end{equation*}
  on $F \times F$, hence
  \begin{equation}
    \label{eqn:conf_pos}
    (z,w) \mapsto K(\theta(z),\theta(w))(I_{\mathcal{F}} - \Phi(\theta(z)) \Phi(\theta(w))^*) \ge 0
  \end{equation}
  on $\theta^{-1}(F) \times \theta^{-1}(F)$.
  Let $a = \theta^{-1}(0)$.
  Part (b) of Lemma \ref{lem:auto_basic} implies that
  \begin{equation*}
    K(\theta(z),\theta(w)) = \frac{K(z,w) K(a,a)}{K(z,a) K(a,w)}
  \end{equation*}
  for all $z,w \in \mathbb{B}_d$, hence
  \begin{align*}
    &K(z,w)(I_{\mathcal{F}} - \Phi(\theta(z)) \Phi(\theta(w))^*) \\
    = \, &\frac{1}{K(a,a)} K(z,a) K(\theta(z),\theta(w)) (I_{\mathcal{F}} - \Phi(\theta(z)) \Phi(\theta(w))^*) K(a,w),
  \end{align*}
  which is positive as a function of $(z,w)$ on $\theta^{-1}(F) \times \theta^{-1}(F)$ by
  Lemma \ref{lem:positive_conjugation} and \eqref{eqn:conf_pos}.
  This means that $\Phi \circ \theta$ is a contractive multiplier
  of $H^2_d \big|_{\theta^{-1}(F)}$ by Lemma \ref{lem:multipliers_basic}.
  The converse follows by consideration of $\theta^{-1}$.
\end{proof}

\subsection{Conformal automorphisms on the range}

Next, we study the action of conformal automorphism on the range of multipliers.
Observe that conformal automorphisms of the unit ball act on both row vectors and column vectors.
More precisely, for $N \in \mathbb{N}$, the group $\Aut(\bB_N)$ acts on the closed unit ball of $M_{1,N}(\bC)$ and on the closed
unit ball of $M_{N,1}(\bC)$.
The next lemma shows that this gives an action on contractive row and column multipliers.

\begin{lem}
  \label{lem:conformal_range}
  Let $\mathcal{H}$ be a reproducing kernel Hilbert space of functions on a set $F$ and
  let $\theta \in \Aut(\mathbb{B}_N)$.
  \begin{enumerate}[label=\normalfont{(\alph*)}]
    \item
  Let $\Phi: F \to M_{1,N}$ be a function with $\|\Phi(z)\| \le 1$
  for all $z \in F$. Then $\Phi$ is a contractive
  multiplier of $\mathcal{H}$ if and only if $\theta \circ \Phi$ is a contractive
  multiplier of $\mathcal{H}$.
    \item
  Let $\Phi: F \to M_{N,1}$ be a function with $\|\Phi(z)\| \le 1$
  for all $z \in F$. Then $\Phi$ is a contractive
  multiplier of $\mathcal{H}$ if and only if $\theta \circ \Phi$ is a contractive
  multiplier of $\mathcal{H}$.
  \end{enumerate}
\end{lem}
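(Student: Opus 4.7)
The plan is to use Lemma \ref{lem:multipliers_basic} to recast contractivity as positivity of an associated auxiliary kernel, then rewrite the $\theta$-composed kernel as a pointwise congruence of the original one, and conclude via Lemma \ref{lem:positive_conjugation}. In each part, only one implication needs to be proved, since applying the same argument to $\theta^{-1} \in \Aut(\mathbb{B}_N)$ yields the reverse.

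For (a), the multiplier condition for $\Phi : F \to M_{1,N}$ translates to positivity of the scalar kernel $K(z,w)(1 - \langle \Phi(z), \Phi(w) \rangle)$. Setting $a = \theta^{-1}(0) \in \mathbb{B}_N$ and applying Lemma \ref{lem:auto_basic}(b) pointwise to $\Phi(z), \Phi(w) \in \overline{\mathbb{B}_N}$ (the identity extends continuously to the closed ball) would yield
\begin{equation*}
  K(z,w)\bigl(1 - \langle \theta \circ \Phi(z),\, \theta \circ \Phi(w) \rangle\bigr) = (1 - \|a\|^2)\, f(z)\, \overline{f(w)}\, K(z,w)\bigl(1 - \langle \Phi(z), \Phi(w) \rangle\bigr),
\end{equation*}
where $f(z) = (1 - \langle \Phi(z), a \rangle)^{-1}$ is well-defined and nonzero since $|\langle \Phi(z), a \rangle| \le \|a\| < 1$. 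Applying Lemma \ref{lem:positive_conjugation} with the scalar conjugator $f$ then transfers positivity.

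For (b), the multiplier condition for $\Phi : F \to M_{N,1}$ is positivity of the matrix-valued kernel $K(z,w)(I_N - \Phi(z) \Phi(w)^*)$. I would use the standard factorization $\theta = U \circ \phi_a$ into a unitary $U$ and a M\"obius transformation $\phi_a$ with $\phi_a(a) = 0$, where $a = \theta^{-1}(0)$. The unitary case is immediate from Lemma \ref{lem:positive_conjugation} with constant conjugator $U$. For the M\"obius case, using the explicit formula $\phi_a(v) = (a - P_a v - s_a Q_a v)/(1 - \langle v, a \rangle)$, with $P_a = a a^* / \|a\|^2$, $Q_a = I_N - P_a$, and $s_a = \sqrt{1 - \|a\|^2}$, I would establish the matrix identity
\begin{equation*}
  I_N - \phi_a(v) \phi_a(u)^* = \frac{M(v)\,(I_N - v u^*)\,M(u)^*}{(1 - \langle v, a \rangle)(1 - \langle a, u \rangle)},
\end{equation*}
where $M(v) = s_a P_a + (1 - \langle v, a \rangle) Q_a + (Q_a v)\, a^*$; this is a routine expansion using $P_a^2 = P_a$, $Q_a^2 = Q_a$, $P_a Q_a = 0$, and $P_a a = a$ (and can be sanity-checked at $v = u = 0$ and $v = u = a$). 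Multiplying by $K(z,w)$ and applying Lemma \ref{lem:positive_conjugation} with the matrix-valued conjugator $z \mapsto M(\Phi(z))/(1 - \langle \Phi(z), a \rangle)$ then transfers matrix positivity between the two kernels.

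The main obstacle is identifying the correct matrix refinement of Lemma \ref{lem:auto_basic}(b) needed in (b): part (a) follows almost formally from the scalar identity, while part (b) requires guessing the right form of $M(v)$. Once the factorization is in hand, verification is mechanical, and the rest of the argument runs in parallel with (a).
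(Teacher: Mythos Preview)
Your proposal is correct and follows essentially the same approach as the paper's first proof: part (a) is identical, and for part (b) you establish the same matrix congruence identity for $I_N - \phi_a(v)\phi_a(u)^*$ that the paper proves as Lemma~\ref{lem:automorphism_columns}, just parametrizing the conjugator as $M(v)/(1-\langle v,a\rangle)$ rather than in the equivalent closed form $(I-aa^*)^{1/2}(I-va^*)^{-1}$ used there. The paper also offers a second, dilation-theoretic proof, but your elementary route matches the first one.
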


We will provide two proofs of Lemma \ref{lem:conformal_range}. The first proof is elementary, but requires
some computations. The second proof is shorter, but uses dilation theory.

In the first proof, we require the following analogue of the formula in part (b) of Lemma \ref{lem:auto_basic}.
Formulas of this type are certainly known, see for instance \cite[Chapter 8]{IS85}.
Since we do not have a reference for the precise formula we need, we provide the proof.

\begin{lem}
  \label{lem:automorphism_columns}
  Let $\theta \in \Aut(\mathbb{B}_N)$ and let $a = \theta(0) \in M_{N,1}(\mathbb{C})$. Then for all
  $z,w$ in the closed unit ball of $M_{N,1}$, the identity
  \begin{equation*}
    I - \theta(z) \theta(w)^* = (I - a a^*)^{1/2} (I - z a^*)^{-1} (I - z w^*)
    (I - a w^*)^{-1} (I - a a^*)^{1/2}
  \end{equation*}
  holds.
\end{lem}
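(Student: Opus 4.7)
The plan is to verify the identity by direct computation for the canonical involution $\varphi_a \in \Aut(\mathbb{B}_N)$ sending $0 \leftrightarrow a$, which is the main case: every $\theta \in \Aut(\mathbb{B}_N)$ with $\theta(0) = a$ factors as $\theta = \varphi_a \circ V$ for some unitary $V$ on $\mathbb{C}^N$, and the unitary factor $V$ can be handled separately in the application to Lemma \ref{lem:conformal_range}.

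I will use Rudin's explicit formula $\varphi_a(z) = (a - Az)/(1 - \langle z, a \rangle)$, where $A = P_a + s_a Q_a$, $P_a = aa^*/|a|^2$ is the rank-one projection onto $\mathbb{C}a$, $Q_a = I - P_a$, and $s_a = \sqrt{1 - |a|^2}$. Setting $B = (I - aa^*)^{1/2} = Q_a + s_a P_a$, a direct check in the spectral decomposition on $\mathbb{C}a \oplus (\mathbb{C}a)^\perp$ yields the identities
\[
Ba = s_a a, \quad a^*B = s_a a^*, \quad AB = BA = s_a I, \quad B^2 = I - aa^*, \quad aa^* + s_a B = A,
\]
with the last one being responsible for the key cancellation in what follows.

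I then clear denominators in the claimed identity by multiplying both sides by $(1-\langle z,a\rangle)(1-\langle a,w\rangle)$. The left-hand side expands to $(1-\langle z,a\rangle)(1-\langle a,w\rangle)I - aa^* + aw^*A + Aza^* - Azw^*A$. On the right-hand side, the Sherman--Morrison formula gives $(1-\langle z,a\rangle)(I-za^*)^{-1} = (1-\langle z,a\rangle)I + za^*$ and likewise for the other factor; a short bilinear expansion (exploiting the cancellation $-za^* zw^* - (1-\langle z,a\rangle)zw^* = -zw^*$) reduces the right-hand side to $B\,M(z,w)\,B$, where
\[
M(z,w) = (1-\langle z,a\rangle)(1-\langle a,w\rangle)I + (1-\langle z,a\rangle)aw^* + (1-\langle a,w\rangle)z(a^*-w^*) + (|a|^2-\langle a,w\rangle)zw^*.
\]
The identity then reduces to matching the coefficients of the four matrix monomials $I$, $aw^*$, $za^*$, $zw^*$ in the resulting expressions, using the spectral identities listed above.

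The main obstacle is the bookkeeping in this final matching step, most notably for the $zw^*$ coefficient, which collects contributions from $BIB = I-aa^*$, the $zw^*$ piece of $B\,z(a^*-w^*)\,B$, and $(|a|^2-\langle a,w\rangle)\,Bzw^*B$, and must be shown equal to the contribution $-Azw^*A$ on the left-hand side; this is where the relation $A^2 = s_a^2 I + aa^*$ (equivalently $AB = s_a I$) enters most decisively. A more structural alternative would be to exhibit a unitary colligation on $\mathbb{C}^N \oplus \mathbb{C}$ whose transfer function is $\varphi_a$ and to read the identity off as the associated de Branges--Rovnyak kernel factorization, but the direct computation outlined above is self-contained and not long once the spectral identities for $A$ and $B$ are in place.
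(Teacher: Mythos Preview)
Your reduction to the canonical involution $\varphi_a$ and the algebraic identities you list ($AB=s_aI$, $Ba=s_a a$, $A=aa^*+s_aB$, etc.) are all correct, and the Sherman--Morrison expansion leading to your $M(z,w)$ is accurate. The overall architecture matches the paper's: reduce to $\theta_a$, then compute. Where you diverge is in the computation itself. The paper first establishes the closed form
\[
\theta_a(z)=(I-aa^*)^{1/2}(I-za^*)^{-1}(a-z)(1-a^*a)^{-1/2},
\]
from which the identity drops out almost immediately upon forming $I-\theta_a(z)\theta_a(w)^*$ and simplifying the bracketed middle factor. Your route instead expands both sides from Rudin's original formula and matches terms directly via $A=aa^*+s_aB$. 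Both work; the paper's alternate formula gives a cleaner structural picture, while your approach avoids having to discover that formula first.

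One point of care: your phrase ``matching coefficients of $I$, $aw^*$, $za^*$, $zw^*$'' is slightly misleading, since conjugation by $B$ does not preserve these monomials (for instance $Bzw^*B$ is not a scalar multiple of $zw^*$ for generic $z,w$). What actually makes the matching go through is that substituting $A=aa^*+s_aB$ into the left-hand side rewrites every term there as a combination of the four blocks $aa^*$, $aw^*B$, $Bza^*$, $Bzw^*B$, and these then agree term-by-term with the expansion of $B\,M(z,w)\,B$. This is the computation you are pointing at, and it does go through cleanly.

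Finally, your caution about the unitary factor is well placed and in fact sharper than the paper. The identity as stated does \emph{not} literally hold for general $\theta$: take $a=0$ and $\theta$ a nontrivial unitary, so the right-hand side is $I-zw^*$ while the left is $I-\theta(z)\theta(w)^*=I-U^*zw^*U$. The paper's sentence ``it suffices to show the lemma for $\theta=\theta_a$'' glosses over this. It is harmless for the application (Lemma~\ref{lem:conformal_range}), where one only needs that $K(z,w)(I-\theta(\Phi(z))\theta(\Phi(w))^*)$ arises from $K(z,w)(I-\Phi(z)\Phi(w)^*)$ by conjugation with a function of $z$, and the extra unitary is absorbed into that conjugation; your plan to handle $V$ there is exactly right.
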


\begin{proof}
  For $a \in \mathbb{B}_N$ and $z \in \overline{\mathbb{B}_N}$, let
  \begin{equation*}
    \theta_a(z) = \frac{a - P_a z - s_a Q_a z}{1 - \langle z,a \rangle},
  \end{equation*}
  where $P_a$ is the orthogonal projection onto $\mathbb{C} a$, $Q_a = I - P_a$ and $s_a = (1 - \|a\|^2)^{1/2}$.
  Then $\theta_a \in \Aut(\mathbb{B}_N)$ is an involution that takes $0$ to $a$ and vice versa, see \cite[Theorem 2.2.2]{Rudin08}.
  Moreover, \cite[Theorem 2.2.5]{Rudin08} shows that the automorphism $\theta^{-1}$ is of the form
  \begin{equation*}
    \theta^{-1} = U \circ \theta_a,
  \end{equation*}
  where $a = \theta(0)$ and $U$ is unitary. Thus, $\theta = \theta_a \circ U^*$, and it suffices to show the lemma
  for $\theta = \theta_a$.

  We claim that the alternate formula
  \begin{equation}
    \label{eqn:automorphism_column}
    \theta_a(z) = (I - a a^*)^{1/2} (I - z a^*)^{-1} (a - z) (1 - a^* a)^{-1/2}
  \end{equation}
  holds.
  To see this, we will compare inner products with $a$ and with vectors in $(\mathbb{C} a)^\bot$. 
  Let $\tau_a(z)$ denote the right-hand side of \eqref{eqn:automorphism_column}.
  Using the basic identities $(I - a a^*)^{1/2} a = a (1 - a^* a)^{1/2}$
  and $(I - a z^*)^{-1} a = a (1 - z^* a)^{-1}$,
  we find that
  \begin{equation*}
    \langle \tau_a(z), a \rangle
    = \langle (I - z a^*)^{-1} (a - z), a \rangle  = \frac{\langle a-z,a \rangle}{1 - \langle z,a \rangle }
    = \langle \theta_a(z),a \rangle.
  \end{equation*}
  On the other hand, let $v \in (\mathbb{C} a)^{\bot}$. Observe that
  \begin{equation*}
    (I - z a^*)^{-1} (a-z)
    = a - z (1 - a^* z)^{-1} ( 1- a^* a),
  \end{equation*}
  which is easily checked by multiplying both sides with $I-z a^*$ on the left.
  Moreover, $(I - a a^*)^{1/2} v = v$, for instance by expanding $(1 - a a^*)^{1/2}$ in a power series.
  Thus,
  \begin{align*}
    \langle \tau_a(z),v \rangle &= \langle (I - z a^*)^{-1} (a - z) (1 - a^* a)^{-1/2} ,v \rangle  \\
    &= - \langle z (1 - a^* z)^{-1}(1 - a^* a)^{1/2}, v \rangle 
    = \langle \theta_a(z),v \rangle.
  \end{align*}
  This proves \eqref{eqn:automorphism_column}.

  Using \eqref{eqn:automorphism_column}, we now conclude that
  \begin{align*}
    I - \theta_a(z) \theta_a(w)^*
    = &(I - a a^*)^{1/2} (I - z a^*)^{-1}
    \Big[ (I - z a^*) (I - a a^*)^{-1} (I - a w^*) \\ &\quad - (a - z)(1 - a^* a)^{-1} (a^* - w^*) \Big]
    (I - a w^*)^{-1} (I - a a^*)^{1/2}.
  \end{align*}
  To finish the proof, one checks that the quantity in square brackets
  equals $I - z w^*$, which is a straightforward computation
  by repeatedly using the identity $(I - a a^*)^{-1} a = a (1 - a^* a)^{-1}$.
\end{proof}

We are now ready for the first proof of Lemma \ref{lem:conformal_range}.

\begin{proof}[First proof of Lemma \ref{lem:conformal_range}]
  Let $K$ be the reproducing kernel of $\mathcal{H}$.

  (a) Suppose that $\Phi$ is a contractive row multiplier of $\mathcal{H}$.
  Then
  \begin{equation*}
    K(z,w) (1  - \Phi(z) \Phi(w)^*) \ge 0
  \end{equation*}
  as a function of $(z,w)$ by Lemma \ref{lem:multipliers_basic}.
  Let $a = \theta^{-1}(0) \in M_{1,N}$.
  Notice that if $x,y \in M_{1,N}$, then $x y^* = \langle x^T, y^T \rangle$. Thinking of $\Aut(\mathbb{B}_N)$ as acting on rows
  and applying part (b) of Lemma \ref{lem:auto_basic},
  we find that
  \begin{equation*}
    K(z,w) (1 - \theta(\Phi(z)) \theta(\Phi(w))^*)
    = K(z,w) \frac{(1 - a a^*) (1 - \Phi(z) \Phi(w)^*)}{(1 - \Phi(z) a^*)(1 - a \Phi(w)^*)},
  \end{equation*}
  which is positive as a function of $(z,w)$ by Lemma \ref{lem:positive_conjugation}.
  Hence $\theta \circ \Phi$ is a contractive multiplier as well.
  The converse follows by considering $\theta^{-1}$.

  (b) We use similar reasoning as in (a), but applying Lemma \ref{lem:automorphism_columns} in place of Lemma \ref{lem:auto_basic}.
  Explicitly, suppose that $\Phi$ is a contractive column multiplier of $\mathcal{H}$. Then
  \begin{equation*}
    K(z,w) (I - \Phi(z) \Phi(w)^*) \ge 0.
  \end{equation*}
  Let $a = \theta(0)$. We now think of $\Aut(\mathbb{B}_N)$ as acting  on columns.
  By Lemma \ref{lem:automorphism_columns}, we have
  \begin{gather*}
    K(z,w) ( I - \theta(\Phi(z)) \theta(\Phi(w))^*) \\
    = (I - a a^*)^{1/2} (I - \Phi(z) a^*)^{-1} K(z,w)(I - \Phi(z) \Phi(w)^*) (I - a \Phi(w)^*)^{-1} (I - a a^*)^{1/2},
  \end{gather*}
  which is positive by Lemma \ref{lem:positive_conjugation}.
  Hence $\theta \circ \Phi$ is a contractive multiplier of $\mathcal{H}$. The converse again follows
  by considering $\theta^{-1}$.
\end{proof}

We now provide a second proof of Lemma \ref{lem:conformal_range}, which is dilation theoretic.
Recall that a tuple $T = (T_1,\ldots,T_N)$ of operators on a Hilbert space is said to be a row
contraction if the row operator
\begin{equation*}
  \begin{bmatrix}
    T_1 & \ldots & T_N
  \end{bmatrix}
\end{equation*}
has operator norm at most $1$, and a column contraction if the column operator
\begin{equation*}
  \begin{bmatrix}
    T_1 \\ \vdots \\ T_N
  \end{bmatrix}
\end{equation*}
has operator norm at most $1$. If a tuple $T = (T_1,\ldots,T_N)$ of commuting operators is either a row or a column contraction
and if $\theta \in \Aut(\mathbb{B}_N)$ with component functions $\theta_1,\ldots,\theta_N$,
then the explicit formula for elements  of $\Aut(\mathbb{B}_N)$ given in \cite[Theorem 2.2.5]{Rudin08} shows that one can
define an operator tuple $\theta(T) = (\theta_1(T), \ldots, \theta_N(T))$ by means of a norm convergent power series.
(Using more machinery, one could also use the Taylor functional calculus to define $\theta(T)$.)
Moreover, if $T$ is in fact a tuple of multiplication operators on a reproducing kernel Hilbert space, say $T_{j} = M_{\varphi_j}$,
and $\Phi = (\varphi_1, \ldots, \varphi_N)$, then $\theta_j(T) = M_{\theta_j \circ \Phi}$. Therefore,
Lemma \ref{lem:conformal_range} is also a special case of the following operator theoretic result.

\begin{prop}
  Let $T = (T_1,\ldots,T_N)$ be a tuple of commuting operators on a Hilbert space and let $\theta \in Aut(\mathbb{B}_N)$.
  \begin{enumerate}[label=\normalfont{(\alph*)}]
    \item If $T$ is a row contraction, then the tuple $\theta(T)$ is also a row contraction.
    \item If $T$ is a column contraction, then the tuple $\theta(T)$ is also a column contraction.
  \end{enumerate}
\end{prop}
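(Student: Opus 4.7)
The plan is to deduce the proposition from the scalar Lemma \ref{lem:conformal_range} by means of a dilation argument. Part (a) carries the main content; part (b) will follow from (a) by an adjoint trick.

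For part (a), let $T = (T_1, \ldots, T_N)$ be a commuting row contraction on $\mathcal{H}$. I would invoke the standard dilation theorem for commuting row contractions (due to M\"uller--Vasilescu and Arveson): there exists a Hilbert space $\mathcal{K} \supseteq \mathcal{H}$ and a commuting row contraction $V = (V_1,\ldots,V_N)$ on $\mathcal{K}$ of the form $V = (M_z \otimes I_{\mathcal{E}}) \oplus U$, where $M_z = (M_{z_1}, \ldots, M_{z_N})$ is the Drury--Arveson shift on $H^2_N$, $U$ is a spherical unitary (a commuting tuple of normal operators with joint spectrum in $\partial \mathbb{B}_N$), and $\mathcal{H}$ is co-invariant for $V$ with $T_j = P_{\mathcal{H}} V_j|_{\mathcal{H}}$; consequently $p(T) = P_{\mathcal{H}} p(V)|_{\mathcal{H}}$ for every polynomial $p$. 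Applying Lemma \ref{lem:conformal_range}(a) to the coordinate function $\Phi(z) = z$, which is a contractive row multiplier of $H^2_N$, one obtains that the row $(\theta_1, \ldots, \theta_N)$ is also a contractive row multiplier of $H^2_N$; hence $\theta(M_z \otimes I_{\mathcal{E}})$ is a row contraction. Moreover, since $\theta$ extends continuously to $\overline{\mathbb{B}_N}$ with $\theta(\partial \mathbb{B}_N) = \partial \mathbb{B}_N$ by Lemma \ref{lem:auto_basic}(a), the Borel functional calculus applied to $U$ produces a spherical unitary $\theta(U)$, which is in particular a row contraction. Thus $\theta(V)$ is a row contraction on $\mathcal{K}$. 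Because $\theta$ is a tuple of rational functions with no poles in $\overline{\mathbb{B}_N}$, its Taylor series at the origin converges in operator norm at any commuting row contraction; passing to the limit in $p(T) = P_{\mathcal{H}} p(V)|_{\mathcal{H}}$ over the Taylor partial sums yields $\theta(T) = P_{\mathcal{H}} \theta(V)|_{\mathcal{H}}$. Since compressions of row contractions are row contractions, $\theta(T)$ is a row contraction.

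For part (b), let $T$ be a commuting column contraction. Then $T^* = (T_1^*, \ldots, T_N^*)$ is a commuting row contraction. Complex conjugation $\sigma(z) = \bar z$ is an anti-holomorphic involution of $\mathbb{B}_N$, so $\widetilde{\theta} := \sigma \circ \theta \circ \sigma$ belongs to $\Aut(\mathbb{B}_N)$; its Taylor coefficients are the conjugates of those of $\theta$, and a direct comparison of power series gives $\widetilde{\theta}_j(T^*) = \theta_j(T)^*$ for each $j$. Applying part (a) to $T^*$ and $\widetilde{\theta}$ shows that $\theta(T)^* = \widetilde{\theta}(T^*)$ is a row contraction, which is precisely the statement that $\theta(T)$ is a column contraction.

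The main obstacle is the invocation of the dilation theorem for commuting row contractions; once this decomposition is in hand, handling $\theta(V)$ summand by summand (shift via the already-proved multiplier version, spherical unitary via Borel calculus) and transferring to $\theta(T)$ through the Taylor expansion of $\theta$ on a neighborhood of $\overline{\mathbb{B}_N}$ are routine.
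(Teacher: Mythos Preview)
Your argument is correct and follows essentially the same dilation route as the paper: reduce part~(a) to the fact that $(\theta_1,\ldots,\theta_N)$ is a contractive row multiplier of $H^2_N$, and derive part~(b) from part~(a) via the conjugate automorphism $\widetilde\theta(z)=\overline{\theta(\bar z)}$. The paper streamlines your part~(a) by quoting Arveson's von~Neumann inequality \cite[Theorem~8.1]{Arveson98} directly rather than unpacking the dilation into a shift summand and a spherical unitary summand; this bypasses the separate Borel-calculus treatment of the unitary part. One stylistic point: you invoke Lemma~\ref{lem:conformal_range}(a) to bound the row multiplier norm of $\theta$, whereas the paper uses Lemma~\ref{lem:conformal_domain} instead---since the proposition is presented precisely as an alternative, dilation-theoretic proof of Lemma~\ref{lem:conformal_range}, citing that lemma here (though logically legitimate, given its independent first proof) makes the argument look circular, and Lemma~\ref{lem:conformal_domain} serves equally well for this step.
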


\begin{proof}
  (a) Since $T$ is a row contraction, the Drury--M\"uller--Vasilecu--Arveson dilation theorem
  (see \cite[Theorem 8.1]{Arveson98}) shows that
  \begin{equation*}
    \big\|
    \begin{bmatrix}
      \theta_1(T) & \ldots & \theta_N(T)
    \end{bmatrix} \big\|
    \le \big\|
    \begin{bmatrix}
      \theta_1 & \ldots & \theta_N
    \end{bmatrix}
    \big\|_{\Mult(H^2_N \otimes \mathbb{C}^N, H^2_N)}.
  \end{equation*}
  On the other hand, it is well known (and directly follows from Lemma \ref{lem:multipliers_basic}) that
  \begin{equation*}
    \big\|
    \begin{bmatrix}
      z_1 & \ldots & z_N
    \end{bmatrix}
    \big\|_{\Mult(H^2_N \otimes \mathbb{C}^N, H^2_N)} \le 1,
  \end{equation*}
  hence Lemma \ref{lem:conformal_domain}, applied with $F = \mathbb{B}_d$, 
  implies that
  \begin{equation*}
    \big\|
    \begin{bmatrix}
      \theta_1 & \ldots & \theta_N
    \end{bmatrix}
    \big\|_{\Mult(H^2_N \otimes \mathbb{C}^N, H^2_N)} \le 1.
  \end{equation*}
  Therefore, $\theta(T)$ is a row contraction.

  (b)
  Consider the adjoint tuple $T^* = (T_1^*,\ldots,T_N^*)$, which is a row contraction.
  Define $\widetilde \theta(z) = \overline{\theta(\overline{z})}$ for $z \in \overline{\mathbb{B}_N}$, where the complex
  conjugations are defined componentwise. Clearly, $\widetilde \theta \in \Aut(\mathbb{B}_N)$. Thus, part (a)
  shows that $\widetilde \theta(T^*)$ is a row contraction. Moreover, $\widetilde \theta(T^*) = \theta(T)^*$,
  so $\theta(T)$ is a column contraction.
\end{proof}

\subsection{Factorization}

Our next goal is the factorization step in the Schur algorithm.
If $\varphi \in H^\infty$ with $\varphi(0) = 0$, then we may define $\psi = \varphi / z$,
so that $\psi \in H^\infty$ with $\varphi = z \psi$ and $\|\psi\|_\infty = \|\varphi\|_\infty$.
A generalization of this fact to the Drury--Arveson space was proved by Greene, Richter and Sundberg in
\cite[Corollary 4.2]{GRS05}. Using a version of Leech's theorem, they showed that if $\varphi \in \Mult(H^2_d)$ with $\varphi(0) = 0$,
then there exist $\psi_1,\ldots,\psi_d \in \Mult(H^2_d)$ so that $ \varphi = \sum_{j=1}^d z_j \psi_j$.
Moreover, one can achieve that the column norm of $(\psi_1,\ldots,\psi_d)$ is at most the multiplier norm
of $\varphi$.
The next proposition is a generalization of this factorization result to columns of multipliers
and to restrictions of $H^2_d$.
The proof of \cite[Corollary 4.2]{GRS05} carries over with minimal changes.

Let $\mathbf{z} =
\begin{bmatrix}
  z_1 & \cdots & z_d
\end{bmatrix}$ denote the row vector of coordinate functions.

\begin{prop}
  \label{prop:factorization}
  Let $F \subset \mathbb{B}_d$ with $0 \in F$ and let $\varphi_1,\ldots,\varphi_N \in \Mult(H^2_d \big|_F)$
  such that $\varphi_i(0) = 0$ for all $i$. Then there exist
  $\psi_{ij} \in \Mult(H^2_d \big|_F)$ for $1 \le i \le d, 1 \le j \le N$,
  so that
  \begin{equation*}
    \begin{bmatrix}
      \varphi_1 \\ \vdots \\ \varphi_N
    \end{bmatrix}
    =
    \begin{bmatrix}
      \mathbf{z} & 0 & \cdots & 0 \\
      0 & \mathbf{z} & \cdots & 0 \\
      \vdots & \ddots & \ddots & \vdots \\
      0 & 0 & \cdots & \mathbf{z}
    \end{bmatrix}
    \begin{bmatrix}
      \psi_{11} \\ \vdots \\ \psi_{d 1} \\ \psi_{12} \\ \vdots \\ \psi_{d N}
    \end{bmatrix}
  \end{equation*}
  and
  \begin{equation*}
    \left\| \begin{bmatrix}
      \psi_{11} \\ \vdots \\ \psi_{d 1} \\ \psi_{12} \\ \vdots \\ \psi_{d N}
    \end{bmatrix} \right\|_{\Mult(H^2_d \big|_F, H^2_d \big|_F \otimes \mathbb{C}^{d N})}
    = \left\|
    \begin{bmatrix}
      \varphi_1 \\ \vdots \\ \varphi_N
    \end{bmatrix} \right\|_{\Mult(H^2_d \big|_F, H^2_d \big|_F \otimes \mathbb{C}^{N})}.
  \end{equation*}
  
\end{prop}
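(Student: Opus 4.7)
The plan is to apply the matrix-valued Leech factorization theorem for $H^2_d \big|_F$. By homogeneity I may assume $\|\Phi\|_{\Mult} = 1$, where $\Phi = (\varphi_1, \ldots, \varphi_N)^T$; the target is then a contractive column $\Psi \in \Mult_1(H^2_d \big|_F, H^2_d \big|_F \otimes \mathbb{C}^{dN})$ with entries $(\psi_{ij})$ satisfying $Z \Psi = \Phi$, where $Z$ denotes the $N \times dN$ block-diagonal row with $N$ copies of $\mathbf{z}$ along the diagonal. Since $\mathbf{z}$ is contractive as a row multiplier of $H^2_d$ (Lemma \ref{lem:multipliers_basic}), we have $\|Z\| \leq 1$, so any such $\Psi$ automatically satisfies $\|\Phi\| \leq \|Z\|\|\Psi\| \leq 1 = \|\Phi\|$, giving the claimed equality of multiplier norms.

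The Leech-type theorem I require, analogous to \cite[Corollary 4.2]{GRS05}, asserts that such a $\Psi$ exists if and only if the operator-valued kernel
\begin{equation*}
K(z,w)\bigl(Z(z)Z(w)^* - \Phi(z)\Phi(w)^*\bigr) = K(z,w)\bigl(\langle z,w\rangle I_N - \Phi(z)\Phi(w)^*\bigr)
\end{equation*}
is positive on $F \times F$, where the first equality uses the block-diagonal identity $Z(z)Z(w)^* = \langle z,w\rangle I_N$. To verify this positivity I exploit the vanishing hypothesis $\Phi(0) = 0$ via a Schur-complement argument. Lemma \ref{lem:multipliers_basic} gives that the defect kernel
\begin{equation*}
P(z,w) := K(z,w)\bigl(I_N - \Phi(z)\Phi(w)^*\bigr)
\end{equation*}
is positive on $F \times F$; since $K(\cdot,0) \equiv 1$ and $\Phi(0) = 0$, it satisfies $P(z,0) = I_N = P(0,0)$. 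The standard fact that subtracting the base-point contribution at $0$ preserves positivity of a positive operator-valued kernel (i.e., passing to the reproducing kernel of the orthogonal complement of the fibre over $0$ inside the RKHS associated with $P$) then yields
\begin{equation*}
P(z,w) - P(z,0)P(0,0)^{-1}P(0,w) = P(z,w) - I_N \geq 0,
\end{equation*}
and a direct computation using $K(z,w) - 1 = K(z,w)\langle z,w\rangle$ identifies this with the kernel demanded by Leech.

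The main obstacle I anticipate is not the kernel calculation itself but verifying the precise matrix-valued form of Leech's theorem for the restriction $H^2_d \big|_F$. Since $F$ is finite, the positivity condition is purely finite-dimensional and the theorem for $H^2_d$ transfers, so in practice this should amount to checking that the argument of \cite[Corollary 4.2]{GRS05} carries over with an auxiliary Hilbert space $\mathbb{C}^N$ in place throughout, as the authors indicate.
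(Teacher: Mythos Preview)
Your proposal is correct and follows essentially the same route as the paper: normalize to $\|\Phi\|_{\Mult}=1$, establish positivity of $K(z,w)\bigl(\langle z,w\rangle I_N - \Phi(z)\Phi(w)^*\bigr)$ on $F\times F$, and then invoke Leech's theorem for complete Pick spaces (\cite[Theorem~8.57]{AM02}) to produce the contractive $\Psi$. The only cosmetic difference is that the paper obtains the positivity by recognizing $\Phi$ as a contractive multiplier into $\mathcal{H}_0\otimes\mathbb{C}^N$, where $\mathcal{H}_0=\{f:f(0)=0\}$ has kernel $\langle z,w\rangle/(1-\langle z,w\rangle)$, whereas you reach the same conclusion via the equivalent Schur-complement subtraction $P(z,w)-I_N$; note also that the proposition does not assume $F$ finite, but this is harmless since Leech's theorem applies to any complete Pick space.
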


\begin{proof}
  To shorten notation, set $\mathcal{H} = H^2_d \big|_F$ and $\mathcal{H}_0 = \{ f \in \mathcal{H}: f(0) = 0 \}$.
  Let
  \begin{equation*}
    \Phi =
    \begin{bmatrix}
      \varphi_1 \\ \vdots \\ \varphi_N
    \end{bmatrix} \in \Mult(\mathcal{H}, \mathcal{H} \otimes \mathbb{C}^N)
  \end{equation*}
  and assume without loss of generality that $\Phi$ has multiplier norm $1$.
  Since $\Phi(0) = 0$, $\Phi$ is in fact a contractive multiplier
  from $\cH$ to $\cH_0 \otimes \bC^N$. Note that the reproducing kernel
  of $\cH_0$ is
  \begin{equation*}
    \frac{1}{1 - \langle z , w \rangle} - 1 = \frac{\langle z , w \rangle}{1 - \langle z, w \rangle}.
  \end{equation*}
  Therefore, Lemma \ref{lem:multipliers_basic} implies that
  \begin{equation}
    \label{eqn:kernel_zero}
    \frac{1}{1 - \langle z, w \rangle} ( \langle z, w \rangle I_N - \Phi(z) \Phi(w)^*) \ge 0
  \end{equation}
  as a function of $(z,w)$ on $F \times F$.
  Consider the $B(\mathbb{C}^{d N}, \mathbb{C}^N)$-valued function
  \begin{equation*}
    \Theta(z) =
    \begin{bmatrix}
      \mathbf{z} & 0 & \cdots & 0 \\
      0 & \mathbf{z} & \cdots & 0 \\
      \vdots & \ddots & \ddots & \vdots \\
      0 & 0 & \cdots & \mathbf{z}
    \end{bmatrix}.
  \end{equation*}
  Then \eqref{eqn:kernel_zero} can be equivalently written as
  \begin{equation*}
    \frac{1}{1 - \langle z, w \rangle} ( \Theta(z) \Theta(w)^* - \Phi(z) \Phi(w)^* ) \ge 0.
  \end{equation*}
  In this setting, Leech's theorem for complete Pick spaces (see \cite[Theorem 8.57]{AM02})
  yields a contractive multiplier $\Psi \in \Mult(\mathcal{H}, \mathcal{H} \otimes \mathbb{C}^{dN})$ so that $\Phi = \Theta \Psi$. Since $\Theta$ is a contractive multiplier,
  we see that $\Psi$ has in fact multiplier norm equal to $1$.
  Writing $\Psi$ as the column of its coordinate functions finishes the proof.
\end{proof}

\subsection{From columns to matrices}

Suppose that we are given a factorization of a column multiplier as in Proposition \ref{prop:factorization}.
Then the corresponding row multiplier is given by
\begin{equation*}
  \begin{bmatrix}
    \varphi_1 & \cdots & \varphi_N
  \end{bmatrix} =
  \begin{bmatrix}
    z_1 & \cdots & z_d
  \end{bmatrix}
  \begin{bmatrix}
    \psi_{1 1} & \cdots & \psi_{1 N} \\
    \psi_{2 1} & \cdots & \psi_{2 N} \\
    \vdots  & \ddots & \vdots \\
    \psi_{d 1} & \cdots & \psi_{d N}
  \end{bmatrix}.
\end{equation*}

Therefore, we also have to study the process of going from columns of multipliers
to rectangular matrices. The Schur algorithm does not seem to be well suited
for this problem, as the automorphism groups of the unit balls
of $M_{d N,1}(\bC)$ and $M_{d,N}(\bC)$ are different.

Instead, we will make use of the following lemma, which crucially uses a result of Jury and Martin \cite{JM18}; see also \cite[Lemma 3.3]{CHar}.

\begin{lem}
  \label{lem:JM}
  Let $\cH$ be an irreducible complete Pick space and let $\Psi \in \Mult(\cH \otimes \bC^N, \cH \otimes \bC^M)$.
  Then
  \begin{equation*}
    \|\Psi\|_{\Mult(\cH \otimes \bC^N, \cH \otimes \bC^M)}
    = \sup \{ \|\Psi \Phi\|_{\Mult(\cH, \cH \otimes \bC^M)}:
    \Phi \in \Mult_1(\cH, \cH \otimes \bC^N) \}.
  \end{equation*}
\end{lem}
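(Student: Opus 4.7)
First, the inequality
\[
\sup\{\|\Psi\Phi\|_{\Mult(\cH, \cH \otimes \bC^M)} : \Phi \in \Mult_1(\cH, \cH \otimes \bC^N)\} \le \|\Psi\|_{\Mult(\cH \otimes \bC^N, \cH \otimes \bC^M)}
\]
is a one-line consequence of submultiplicativity of the multiplier norm: for any contractive column multiplier $\Phi$, one has $\|M_\Psi M_\Phi\| \le \|M_\Psi\|\,\|M_\Phi\| \le \|M_\Psi\|$. So one direction is essentially free.

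The reverse inequality is the substantive content, and my plan is to reduce it to a column factorization theorem for complete Pick spaces. Concretely, I would invoke the Jury--Martin result (whose variant appears as \cite[Lemma~3.3]{CHar}): for an irreducible complete Pick space $\cH$, every $f$ in the closed unit ball of $\cH \otimes \bC^N$ admits a representation $f = M_\Phi h$, where $\Phi \in \Mult_1(\cH, \cH \otimes \bC^N)$ is a contractive column multiplier and $h \in \cH$ has $\|h\| \le 1$. (If the precise form of the cited result only yields $\|h\| \le 1+\epsilon$, the sharp bound is recovered by a weak-$*$ compactness argument, using that the unit ball of $\Mult(\cH, \cH \otimes \bC^N)$ is weak-$*$ closed by the positivity characterization of Lemma \ref{lem:multipliers_basic} together with the Banach--Alaoglu theorem.) Granted this factorization, for any unit vector $f \in \cH \otimes \bC^N$,
\[
\|M_\Psi f\|_{\cH \otimes \bC^M} = \|M_\Psi M_\Phi h\|_{\cH \otimes \bC^M} \le \|\Psi\Phi\|_{\Mult(\cH, \cH \otimes \bC^M)} \cdot \|h\|,
\]
which is bounded by the right-hand supremum in the claimed identity. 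Taking the supremum over $f$ in the unit ball of $\cH \otimes \bC^N$ then yields $\|\Psi\| \le \sup\{\|\Psi\Phi\|\}$, finishing the reverse inequality.

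The hard part is precisely the column factorization step: this is the deep structural input from the Jury--Martin work, and it is the only place where the irreducible complete Pick hypothesis is actually used. Everything else in the argument is purely formal (submultiplicativity, a supremum, and, if needed, a soft weak-$*$ compactness step). In particular, no induction, scaling or Schur-algorithmic manipulation is required at this stage; the lemma is effectively a packaging of the JM factorization into a statement that will be convenient later for converting the column-row property into the column-matrix property in the main inductive argument.
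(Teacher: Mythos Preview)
Your argument is correct and matches the paper's proof essentially line for line: the trivial inequality via submultiplicativity, followed by the Jury--Martin factorization of an arbitrary unit vector $F \in \cH \otimes \bC^N$ as $\Phi f$ with $\Phi$ a contractive column multiplier and $\|f\|=1$, then taking the supremum. The only minor differences are that the paper first normalizes the kernel at a point before invoking \cite[Theorem~1.1]{JM18} (which is stated for normalized spaces), and that your weak-$*$ compactness contingency is unnecessary, since the cited factorization already delivers $\|f\|=1$ exactly.
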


\begin{proof}
  By normalizing the reproducing kernel at a point, we may assume that $\mathcal{H}$ is normalized,
  see Subsection \ref{ss:CNP}.
  The inequality ``$\ge$'' in the lemma is trivial.

  To prove the reverse inequality, let $F \in \mathcal{H} \otimes \mathbb{C}^N$ with $\|F\| = 1$.
  Applying \cite[Theorem 1.1]{JM18}, we obtain $\Phi \in \Mult_1(\mathcal{H}, \mathcal{H} \otimes \mathbb{C}^N)$
  and $f \in \mathcal{H}$ with $\|f\| = 1$ and $F = \Phi f$. Thus,
  \begin{equation*}
    \|\Psi F\|_{\mathcal{H} \otimes \mathbb{C}^M} = \|\Psi \Phi f\|_{\mathcal{H} \otimes \mathbb{C}^M} \le \|\Psi \Phi\|_{\Mult(\mathcal{H}, \mathcal{H} \otimes \mathbb{C}^N)}.
  \end{equation*}
  Taking the supremum over all $F$ in the unit sphere of $\mathcal{H} \otimes \mathbb{C}^N$ yields the remaining inequality.
\end{proof}

With the help of Lemma \ref{lem:JM}, we can establish the key fact that the column-row property
implies the more general ``column-matrix property''.
For the Hardy space $H^2$, the multiplier norm of any matrix of multipliers
is simply the supremum of the pointwise operator norms. In this case, the following
result follows from the basic inequality $\|A\|_{op} \le \|A\|_{HS}$ between
the operator norm and the Hilbert--Schmidt norm of a matrix $A$.

\begin{prop}
  \label{prop:column_to_matrix}
  Let $\cH$ be an irreducible complete Pick space that satisfies the column-row property with constant $1$.
  Let $\psi_{i j} \in \Mult(\cH)$ for $1 \le i \le M$ and $1 \le j \le N$. Then
  \begin{equation*}
    \left\|
  \begin{bmatrix}
    \psi_{1 1} & \cdots & \psi_{1 N} \\
    \psi_{2 1} & \cdots & \psi_{2 N} \\
    \vdots  & \ddots & \vdots \\
    \psi_{M 1} & \cdots & \psi_{M N}
  \end{bmatrix}
     \right\|_{\Mult(\cH \otimes \bC^N, \cH \otimes \bC^M)}
    \le
    \left\|
    \begin{bmatrix}
      \psi_{11} \\ \vdots \\ \psi_{M 1} \\ \psi_{12} \\ \vdots \\ \psi_{M N}
    \end{bmatrix}
    \right\|_{\Mult(\cH, \cH \otimes \bC^{N M})}.
  \end{equation*}
\end{prop}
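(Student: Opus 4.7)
The plan is to use Lemma \ref{lem:JM} to reduce the matrix-norm inequality to a column-norm one, and then derive the required matrix-valued kernel positivity by sandwiching the given $NM$-column positivity between a carefully chosen pair of auxiliary linear maps. Concretely, write $c = \|C\|_{\Mult(\cH, \cH \otimes \bC^{NM})}$, where $C$ is the $NM$-column on the right of the inequality. By Lemma \ref{lem:JM} it suffices to prove $\|\Psi \Phi\|_{\Mult(\cH, \cH \otimes \bC^M)} \le c$ for every contractive column multiplier $\Phi \in \Mult_1(\cH, \cH \otimes \bC^N)$, which by Lemma \ref{lem:multipliers_basic} amounts to the $M \times M$ matrix-valued kernel positivity $K(z,w)(c^2 I_M - (\Psi \Phi)(z)(\Psi \Phi)(w)^*) \ge 0$.

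For each $v \in \bC^N$ introduce the linear map $A_v : \bC^{NM} \to \bC^M$ given by $(A_v x)_i = \sum_j v_j x_{(i,j)}$, where $\bC^{NM}$ is indexed by the pairs $(i,j)$ matching the listing of the entries of $C$. By construction $A_v C(z) = \Psi(z) v$ pointwise, so the $z$-dependent map $A(z) := A_{\Phi(z)}$ satisfies $A(z) C(z) = (\Psi \Phi)(z)$; and a short coordinate computation of $A_v^*$ yields the key identity
\begin{equation*}
A_v A_w^* = \langle v, w\rangle\, I_M,
\end{equation*}
so that $A(z) A(w)^* = \langle \Phi(z), \Phi(w)\rangle\, I_M$. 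From Lemma \ref{lem:multipliers_basic} and the definition of $c$ we have $K(z,w)(c^2 I_{NM} - C(z) C(w)^*) \ge 0$; conjugating by $A$ via Lemma \ref{lem:positive_conjugation} then produces
\begin{equation*}
K(z,w)\bigl(c^2 \langle \Phi(z), \Phi(w)\rangle\, I_M - (\Psi \Phi)(z)(\Psi \Phi)(w)^*\bigr) \ge 0.
\end{equation*}

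To finish, the contractivity $\|\Phi\| \le 1$ and Lemma \ref{lem:multipliers_basic} give the scalar positivity $K(z,w)(1 - \langle \Phi(z), \Phi(w)\rangle) \ge 0$; tensoring with $c^2 I_M$ and adding to the preceding display cancels the inner-product term and yields $K(z,w)(c^2 I_M - (\Psi \Phi)(z)(\Psi \Phi)(w)^*) \ge 0$, which is the required positivity. The main obstacle is guessing the right definition of $A_v$: the design is forced by the identity $A_v A_w^* = \langle v, w\rangle I_M$, so that the scalar $\langle \Phi(z), \Phi(w)\rangle$ arising from the sandwich is exactly what is absorbed by $\|\Phi\| \le 1$.
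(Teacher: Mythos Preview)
Your argument has the right architecture and is essentially the same as the paper's proof (both use Lemma~\ref{lem:JM} to reduce to bounding $\|\Psi\Phi\|$ for contractive column multipliers $\Phi$, and your map $A_{\Phi(z)}$ is exactly the pointwise value of the paper's block-diagonal matrix with $\Phi^T$ on the diagonal). However, there is a genuine gap in the last step, and it is precisely the place where the column-row hypothesis should enter---which you never invoke.

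You claim that ``the contractivity $\|\Phi\|\le 1$ and Lemma~\ref{lem:multipliers_basic} give the scalar positivity $K(z,w)(1-\langle\Phi(z),\Phi(w)\rangle)\ge 0$.'' This is not what Lemma~\ref{lem:multipliers_basic} yields for a contractive \emph{column} multiplier $\Phi\in\Mult_1(\cH,\cH\otimes\bC^N)$: that lemma gives the $N\times N$ matrix positivity $K(z,w)(I_N-\Phi(z)\Phi(w)^*)\ge 0$. The scalar condition $K(z,w)(1-\langle\Phi(z),\Phi(w)\rangle)\ge 0$ is exactly the statement that the \emph{row} $\Phi^T$ is a contractive multiplier, since $\Phi^T(z)(\Phi^T(w))^*=\langle\Phi(z),\Phi(w)\rangle$. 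Passing from the former to the latter is precisely the column-row property with constant $1$, which is a hypothesis of the proposition. Once you insert this one line, your proof is complete and matches the paper's, phrased at the kernel level rather than the operator level.
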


\begin{proof}
  We will compute the norm of the matrix on the left with the help of Lemma \ref{lem:JM}.
  To this end, let
  \begin{equation*}
    \Phi =
    \begin{bmatrix}
      \varphi_1 \\ \vdots \\ \varphi_N
    \end{bmatrix} \in \Mult_1(\mathcal{H}, \mathcal{H} \otimes\mathbb{C}^N).
  \end{equation*}
  Using commutativity of the multiplication in $\Mult(\mathcal{H})$, we find that
  \begin{gather*}
  \begin{bmatrix}
    \psi_{1 1} & \cdots & \psi_{1 N} \\
    \psi_{2 1} & \cdots & \psi_{2 N} \\
    \vdots  & \ddots & \vdots \\
    \psi_{M 1} & \cdots & \psi_{M N}
  \end{bmatrix}
    \begin{bmatrix}
      \varphi_1 \\ \varphi_2 \\ \vdots \\ \varphi_N
    \end{bmatrix} \\
    =
    \begin{bmatrix}
      \varphi_1 & \cdots & \varphi_N & 0 & \cdots & 0 & \cdots & 0 & \cdots & 0 \\
      0 & \cdots & 0 & \varphi_1 & \cdots & \varphi_N & \cdots & 0 & \cdots & 0 \\
      \vdots & \vdots & \vdots & \vdots & \vdots & \vdots & \ddots & \vdots & \vdots & \vdots \\
      0 & \cdots & 0 & 0 & \cdots & 0 & \cdots & \varphi_1 & \cdots & \varphi_N
    \end{bmatrix}
    \begin{bmatrix}
      \psi_{11} \\ \vdots \\ \psi_{1 N} \\ \psi_{21} \\ \vdots \\ \psi_{M N}
    \end{bmatrix}.
  \end{gather*}
  Here, the matrix on the left is the $M \times M N$ block diagonal matrix whose diagonal blocks
  are all equal to the row
  $
  \begin{bmatrix}
    \varphi_1 & \cdots & \varphi_N
  \end{bmatrix}$.
  Since $\mathcal{H}$ satisfies the column-row property with constant $1$ by assumption, the matrix on the left,
  being a direct sum of the rows
  $\begin{bmatrix}
    \varphi_1 & \cdots & \varphi_N
  \end{bmatrix}$, has multiplier norm at most $1$.
  Thus, Lemma \ref{lem:JM} implies that
  \begin{equation*}
    \left\|
  \begin{bmatrix}
    \psi_{1 1} & \cdots & \psi_{1 N} \\
    \psi_{2 1} & \cdots & \psi_{2 N} \\
    \vdots  & \ddots & \vdots \\
    \psi_{M 1} & \cdots & \psi_{M N}
  \end{bmatrix}
  \right\|_{\Mult}
    \le \left\|
    \begin{bmatrix}
      \psi_{11} \\ \vdots \\ \psi_{1 N} \\ \psi_{21} \\ \vdots \\ \psi_{M N}
    \end{bmatrix}
    \right\|_{\Mult}
    =
    \left\|
    \begin{bmatrix}
      \psi_{11} \\ \vdots \\ \psi_{M 1} \\ \psi_{12} \\ \vdots \\ \psi_{M N}
    \end{bmatrix}
    \right\|_{\Mult},
  \end{equation*}
  where in the last step, we used the elementary fact that the column norm is invariant under permutations.
\end{proof}

The following simple example shows that in general, it is not true that
a sequence of operators that forms both a row and a column contraction also forms
contractive matrices.

\begin{exa}
  Consider the scaled $2 \times 2$ matrix units
  \begin{equation*}
    E_{11} = \frac{1}{\sqrt{2}}
    \begin{bmatrix}
      1 & 0\\
      0 & 0
    \end{bmatrix},
    E_{12} =\frac{1}{\sqrt{2}}
    \begin{bmatrix}
      0 & 1\\
      0 & 0
    \end{bmatrix},
    E_{21} =\frac{1}{\sqrt{2}}
    \begin{bmatrix}
      0 & 0\\
      1 & 0
    \end{bmatrix},
    E_{22} =\frac{1}{\sqrt{2}}
    \begin{bmatrix}
      0 & 0\\
      0 & 1
    \end{bmatrix}.
  \end{equation*}
  One easily checks that
  \begin{equation*}
    \left\|
    \begin{bmatrix}
      E_{11} & E_{12} & E_{2 1} & E_{2 2}
    \end{bmatrix} \right\|
    = 1
    = \left\|
    \begin{bmatrix}
      E_{11} \\ E_{12} \\ E_{2 1} \\ E_{2 2}
    \end{bmatrix} \right\|,
  \end{equation*}
  yet
  \begin{equation*}
    \left\|
    \begin{bmatrix}
      E_{1 1} & E_{12} \\ E_{21} & E_{22}
    \end{bmatrix} \right\| = \sqrt{2}.
  \end{equation*}
\end{exa}

\subsection{Proof of the main result}

We are almost ready to put everything together to prove the main result.
We isolate one last lemma, which will be useful in the inductive proof.
It is an application of the Schur complement technique.

\begin{lem}
  \label{lem:Schur_complement}
  Let $F = E \cup \{0\} \subset \mathbb{B}_d$ be a finite set and let $\mathcal{E},\mathcal{F}$ be Hilbert spaces.
  \begin{enumerate}[label=\normalfont{(\alph*)}]
    \item 
  A function $\Phi: F \to B(\mathcal{E},\mathcal{F})$ with $\Phi(0) = 0$
  is a contractive multiplier of $H^2_d \big|_F$ if and only if
  \begin{equation*}
    (z,w) \mapsto \frac{1}{1 - \langle z,w \rangle} \Big( I_{\mathcal{F}} - \Phi(z) \Phi(w)^* \Big) - I_{\mathcal{F}}
    \ge 0
  \end{equation*}
  on $E \times E$.
\item Let $\Psi: F \to B(\mathcal{E},\mathbb{C}^d)$ be a function and suppose that $\Psi \big|_{E}$ is a contractive multiplier of $H^2_d \big|_{E}$. Then
  $\mathbf{z} \Psi$ is a contractive row multiplier of $H^2_d \big|_F$.
  \end{enumerate}
\end{lem}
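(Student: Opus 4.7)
For part (a), the plan is to apply the Schur complement technique to the kernel matrix furnished by Lemma \ref{lem:multipliers_basic}. By that lemma, $\Phi$ is a contractive multiplier of $H^2_d\big|_F$ if and only if the $B(\mathcal{F})$-valued kernel
\[
(z,w) \mapsto \frac{1}{1-\langle z,w\rangle}\bigl(I_\mathcal{F} - \Phi(z)\Phi(w)^*\bigr)
\]
is positive on $F \times F$. Since $0 \in F$ and $\Phi(0) = 0$, for every $z \in F$ the value of this kernel at $(z,0)$ and at $(0,z)$ equals $I_\mathcal{F}$, and its value at $(0,0)$ is also $I_\mathcal{F}$. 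Listing the points of $F = E \cup \{0\}$ with $0$ in the first position, the big operator matrix therefore takes the block form
\[
\begin{bmatrix} I_\mathcal{F} & B^* \\ B & A \end{bmatrix},
\]
where the rectangular block $B$ consists of copies of $I_\mathcal{F}$ and $A = \bigl[\tfrac{1}{1-\langle z,w\rangle}(I_\mathcal{F} - \Phi(z)\Phi(w)^*)\bigr]_{z,w \in E}$. Since the $(0,0)$ block is strictly positive, the standard Schur complement criterion reduces positivity of the whole matrix to positivity of $A - B I_\mathcal{F}^{-1} B^*$, whose $(z,w)$ block is exactly $\tfrac{1}{1-\langle z,w\rangle}(I_\mathcal{F} - \Phi(z)\Phi(w)^*) - I_\mathcal{F}$. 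This gives the claimed equivalence.

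For part (b), the observation is that $(\mathbf{z}\Psi)(0) = 0$, so part (a) applies to $\Phi := \mathbf{z}\Psi$. Using $\mathbf{z}\mathbf{w}^* = \langle z,w\rangle$, a short algebraic rearrangement gives
\[
\frac{1}{1-\langle z,w\rangle}\bigl(1 - \mathbf{z}\Psi(z)\Psi(w)^*\mathbf{w}^*\bigr) - 1
= \mathbf{z}\,\left[\frac{1}{1-\langle z,w\rangle}\bigl(I_d - \Psi(z)\Psi(w)^*\bigr)\right]\mathbf{w}^*
\]
on $E \times E$. By Lemma \ref{lem:multipliers_basic}, the bracketed $B(\mathbb{C}^d)$-valued kernel is positive on $E \times E$ because $\Psi\big|_E$ is a contractive multiplier of $H^2_d\big|_E$. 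Conjugating by the $M_{1,d}$-valued function $z \mapsto \mathbf{z}$ and applying Lemma \ref{lem:positive_conjugation} shows that the scalar-valued function on the right is positive on $E \times E$, which is precisely the Schur complement condition from part (a) for $\mathbf{z}\Psi$. Hence $\mathbf{z}\Psi$ is a contractive row multiplier of $H^2_d\big|_F$.

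The only delicate point is bookkeeping with operator blocks in the Schur complement argument; once the $(0,0)$ block is identified as $I_\mathcal{F}$, the rest is routine. Part (b) then follows mechanically by combining part (a) with Lemma \ref{lem:positive_conjugation}.
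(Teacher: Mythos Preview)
Your proposal is correct and follows essentially the same approach as the paper's proof: for part (a) you take the Schur complement of the $(0,0)$ block $I_{\mathcal{F}}$ in the Pick matrix (the paper places $0$ last rather than first, but this is immaterial), and for part (b) you conjugate the positive kernel $\frac{1}{1-\langle z,w\rangle}(I_d-\Psi(z)\Psi(w)^*)$ by $\mathbf{z}$ and invoke Lemma~\ref{lem:positive_conjugation}, exactly as the paper does. Your displayed algebraic identity in part (b) makes the computation a bit more explicit than the paper's phrasing, but the content is identical.
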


\begin{proof}
  (a)
  Let $E = \{\lambda_1,\ldots,\lambda_{n-1} \}$ and
  set $\lambda_n = 0$, so that $F = \{\lambda_1,\ldots,\lambda_n\}$.
  An application of Lemma \ref{lem:multipliers_basic} shows that
  $\Phi$ is a contractive multiplier of $H^2_d \big|_F$ if and only if
  \begin{equation}
    \label{eqn:Schur_complement}
    \Big[
    \frac{1}{1 - \langle \lambda_i,\lambda_j \rangle} ( I_{\mathcal{F}}- \Phi(\lambda_i) \Phi(\lambda_j)^* ) \Big]_{i,j=1}^n \ge 0.
  \end{equation}
  Since $\Phi(0) = 0$, each entry in the last row and in the last column of this matrix is equal to $I_{\mathcal{F}}$.
  Taking the Schur complement of the lower right corner
  (see, for instance, \cite[Lemma 7.27]{AM02}), we see that \eqref{eqn:Schur_complement}
  holds if and only if
  \begin{equation*}
    \Big[ \frac{1}{1 - \langle \lambda_i,\lambda_j \rangle} ( I_{\mathcal{F}}- \Phi(\lambda_i) \Phi(\lambda_j)^* ) - I_{\mathcal{F}} \Big]_{i,j=1}^{n-1} \ge 0,
  \end{equation*}
  which proves part (a).

  (b)
  Since $\Psi|_E$ is a contractive multiplier of $H^2_d \big|_E$, we find that
  \begin{equation*}
    \frac{1}{1 - \langle z,w \rangle }( I_d - {\Psi}(z) {\Psi}(w)^*) \ge 0
  \end{equation*}
  on $E \times E$ by Lemma \ref{lem:multipliers_basic}.
  Multiplying this relation with $
  \begin{bmatrix}
    z_1 & \cdots & z_d
  \end{bmatrix}$
  on the left and with
  $\begin{bmatrix}
    w_1 & \cdots & w_d
  \end{bmatrix}^*$ on the right and using
  the identity $\frac{\langle z,w \rangle }{1 - \langle z,w \rangle} = \frac{1}{1 - \langle z,w \rangle} - 1$,
  we find with the help of Lemma \ref{lem:positive_conjugation} that
  \begin{equation*}
    \frac{1}{1 - \langle z,w \rangle}  \Big( 1 - \Phi(z) (\Phi(w))^* \Big) - 1 \ge 0
  \end{equation*}
  on $E \times E$.
  Thus, part (a) shows that $\Phi$ is a contractive row multiplier of $H^2_d \big|_F$.
\end{proof}

\begin{rem}
  The complete Pick property of $H^2_d$ can be used to obtain a shorter, but somewhat less explicit proof of part (b) of Lemma \ref{lem:Schur_complement}.
  Indeed, if $\Psi: F \to B(\mathcal{E},\mathbb{C}^d)$ is a function with the property
  that $\Psi \big|_{E}$ is a contractive multiplier of $H^2_d \big|_{E}$, then the complete Pick property of $H^2_d$
  implies that there exists a contractive multiplier $\widehat \Psi: \mathbb{B}_d \to B(\mathcal{\mathcal{E}},\mathbb{C}^d)$
  of $H^2_d$  satisfying $\widehat{\Psi} \big|_{E} = \Psi \big|_{E}$.
  Moreover, $\mathbf{z} \widehat{\Psi} \big|_{F} = \mathbf{z} \Psi$, as the two functions agree on $E$ and at the origin.
  Since $\mathbf{z}$ is a contractive row multiplier of $H^2_d$,
  the product $\mathbf{z} \widehat \Psi$ is contractive row multiplier of $H^2_d$,
  hence $\mathbf{z} \Psi = \mathbf{z} \widehat{\Psi} \big|_F$
  is a contractive row multiplier of $H^2_d \big|_F$.
\end{rem}

We are now ready to prove Theorem \ref{thm:main}, which we restate.

\begin{thm}
  Every normalized complete Pick space satisfies the column-row property with constant $1$.
\end{thm}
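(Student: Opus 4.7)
The plan is to invoke Lemma \ref{lem:reduction_finite}, which reduces Theorem \ref{thm:main} to proving that $H^2_d \big|_F$ satisfies the column-row property with constant $1$ for every $d \in \mathbb{N}$ and every finite $F \subset \mathbb{B}_d$. I would then proceed by induction on $|F|$, following the Schur-algorithm strategy outlined in Subsection \ref{ss:sketch}. The base case $|F| = 1$ is immediate, since on a single point every multiplier is a scalar, and for scalars the column and row norms of a tuple both agree with its $\ell^2$-norm.

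For the inductive step, fix $n \ge 2$, suppose the result has been established for all sets of cardinality less than $n$, let $F \subset \mathbb{B}_d$ satisfy $|F| = n$, and let $\Phi = (\varphi_1, \ldots, \varphi_N)^T$ be a contractive column multiplier of $H^2_d \big|_F$. The goal is to show that $\Phi^T$ is a contractive row multiplier. First I would apply Lemma \ref{lem:conformal_domain} with a conformal automorphism of $\mathbb{B}_d$ sending $0$ to some chosen point of $F$, reducing to the case $0 \in F$; the same lemma applies to row and column multipliers alike, so the desired conclusion transforms covariantly. Next, picking $\vartheta \in \Aut(\mathbb{B}_N)$ with $\vartheta(\Phi(0)) = 0$, Lemma \ref{lem:conformal_range}(b) ensures $\vartheta \circ \Phi$ is still a contractive column multiplier, while the corresponding row action $\widetilde{\vartheta}(w) := \vartheta(w^T)^T$ satisfies $(\vartheta \circ \Phi)^T = \widetilde{\vartheta} \circ \Phi^T$. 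By Lemma \ref{lem:conformal_range}(a), $\Phi^T$ is a contractive row multiplier if and only if $\widetilde{\vartheta} \circ \Phi^T$ is; hence I may further assume $\Phi(0) = 0$.

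Having arranged $0 \in F$ and $\Phi(0) = 0$, I would apply the factorization of Proposition \ref{prop:factorization} to obtain multipliers $\psi_{i j} \in \Mult(H^2_d \big|_F)$ with $\varphi_j = \sum_{i=1}^d z_i \psi_{i j}$, such that the column $\Psi$ collecting all $d N$ entries $\psi_{i j}$ has multiplier norm at most $1$. Rearranging these entries as a $d \times N$ matrix $\widetilde{\Psi} = [\psi_{i j}]$ yields the identity $\Phi^T = \mathbf{z}\, \widetilde{\Psi}$. Restricting to $E := F \setminus \{0\}$, the column $\Psi \big|_E$ is still contractive. Since $|E| = n-1$, the inductive hypothesis gives the column-row property for $H^2_d \big|_E$ with constant $1$, and Proposition \ref{prop:column_to_matrix} then upgrades this to the column-matrix property with constant $1$, so that $\widetilde{\Psi} \big|_E$ has multiplier norm at most $1$. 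Finally, Lemma \ref{lem:Schur_complement}(b) promotes this to the conclusion that $\mathbf{z}\, \widetilde{\Psi} = \Phi^T$ is a contractive row multiplier of $H^2_d \big|_F$, completing the induction.

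The main obstacle I anticipate is the bookkeeping in the range reduction to $\Phi(0) = 0$: one must verify that the automorphism of $\mathbb{B}_N$ used to kill $\Phi(0)$, which naturally acts on column vectors, is compatible under transposition with a companion automorphism acting on row vectors, so that the row-multiplier question for $\Phi^T$ genuinely converts to the row-multiplier question for $(\vartheta \circ \Phi)^T$. Once this transpose-equivariance is in place, the inductive engine formed by Proposition \ref{prop:factorization}, Proposition \ref{prop:column_to_matrix}, and Lemma \ref{lem:Schur_complement}(b) fits together cleanly.
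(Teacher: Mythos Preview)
Your proposal is correct and follows essentially the same route as the paper's proof: the same reduction via Lemma~\ref{lem:reduction_finite}, the same induction on $|F|$, and the same sequence of conformal normalizations (Lemmas~\ref{lem:conformal_domain} and~\ref{lem:conformal_range}), factorization (Proposition~\ref{prop:factorization}), column-to-matrix upgrade (Proposition~\ref{prop:column_to_matrix}), and Schur-complement step (Lemma~\ref{lem:Schur_complement}(b)). Your explicit treatment of the transpose-equivariance via $\widetilde{\vartheta}(w)=\vartheta(w^T)^T$ is slightly more careful than the paper, which simply invokes Lemma~\ref{lem:conformal_range} and leaves this point implicit.
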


\begin{proof}
  By Lemma \ref{lem:reduction_finite}, it suffices to show that $H^2_d \big|_F$ satisfies
  the column-row property with constant $1$ for all $d \in \mathbb{N}$ and all finite sets $F \subset \mathbb{B}_d$.
  As remarked at the beginning of Subsection \ref{ss:red}, it is also sufficient
  to consider columns of a finite length $N \in \mathbb{N}$.
  We fix $d \in \mathbb{N}$ for the remainder of the proof
  and we prove the statement by induction on $n = |F|$.

  The base case $n=1$ is easy, as the multiplier norm
  in a reproducing kernel Hilbert space on a singleton is simply the norm at the singleton.
  More explicitly, suppose that $F = \{\lambda_1\}$ and let $\Phi$ be a contractive
  column multiplier of $H^2_d \big|_F$,
  say  $\Phi \in \Mult_1(H^2_d \big|_F, H^2_d \big|_F \otimes \mathbb{C}^N)$.
  Then
  \begin{equation*}
    \frac{1}{1 - \|\lambda_1\|^2} (I_N - \Phi(\lambda_1) \Phi(\lambda_1)^*) \ge 0
  \end{equation*}
  by Lemma \ref{lem:multipliers_basic},
  which is equivalent to saying that $\|\Phi(\lambda_1)\|_{B(\mathbb{C}, \mathbb{C}^N)} \le 1$.
  Thus, $\|\Phi^T(\lambda_1)\|_{B(\mathbb{C}^N,\mathbb{C})} \le 1$, so that
  \begin{equation*}
    \frac{1}{1 - \|\lambda_1\|^2} (1 - \Phi^T(\lambda_1) (\Phi^T(\lambda_1))^*) \ge 0
  \end{equation*}
  and hence $\Phi^T \in \Mult_1(H^2_d \big|_F \otimes \mathbb{C}^N, H^2_d \big|_F)$,
  again by Lemma \ref{lem:multipliers_basic}, i.e.\ $\Phi^T$ is a contractive row multiplier.
  This finishes the proof in the case $n=1$.

  Next, let $n \ge 2$ and suppose that we already know that for each subset $E \subset \bB_d$
  with $|E| \le n-1$, the space $H^2_d \big|_{E}$ has the column-row property
  with constant $1$.
  Let $F \subset \mathbb{B}_d$ with $|F| = n$
  and let $\Phi$ be a contractive column multiplier of $H^2_d \big|_F$, say
  $\Phi \in \Mult_1(H^2_d \big|_F, H^2_d \big|_F \otimes \mathbb{C}^N)$.
  Our goal is to show that $\Phi^T$ is a contractive row multiplier, i.e.\ that
  $\Phi^T \in \Mult_1(H^2_d \big|_F \otimes \mathbb{C}^N, H^2_d)$.

  First, we apply the conformal automorphism step of the Schur algorithm.
  Recall that $\Aut(\mathbb{B}_d)$ acts transitively on $\mathbb{B}_d$ (see \cite[Theorem 2.2.3]{Rudin08}),
  so there exists $\theta \in \Aut(\mathbb{B}_d)$ with $0 \in \theta^{-1}(F)$.
  Lemma \ref{lem:conformal_domain} therefore shows that by replacing $F$ with $\theta^{-1}(F)$
  and $\Phi$ with $\Phi \circ \theta$, we may assume without loss of generality that $0 \in F$.
  The fact that $\Phi$ is a contractive column
  multiplier implies that $\|\Phi(\lambda)\| \le 1$ for all $\lambda \in F$, just as in the proof of the base case $n=1$.
  Thus, Lemma \ref{lem:conformal_range} shows that by replacing $\Phi$ with $\theta \circ \Phi$
  for a suitable element of $\Aut(\mathbb{B}_N)$, we may assume that $\Phi(0) = 0$.

  Next, we apply the factorization step of the Schur algorithm.
  Let $\varphi_1,\ldots,\varphi_N$ be the coordinate functions of $\Phi$.
  In our setting, Proposition \ref{prop:factorization} implies that there
  exists a contractive column multiplier
  \begin{equation*}
    \Psi =
    \begin{bmatrix}
      \psi_{11} \\ \vdots \\ \psi_{d 1} \\ \psi_{12} \\ \vdots \\ \psi_{d N}
    \end{bmatrix} \in \Mult_1(H^2_d \big|_F, H^2_d \big|_F \otimes \mathbb{C}^{d N})
  \end{equation*}
  such that
  \begin{equation*}
    \Phi =
    \begin{bmatrix}
      \varphi_1 \\ \vdots \\ \varphi_N
    \end{bmatrix}
    =
    \begin{bmatrix}
      \mathbf{z} & 0 & \cdots & 0 \\
      0 & \mathbf{z} & \cdots & 0 \\
      \vdots & \ddots & \ddots & \vdots \\
      0 & 0 & \cdots & \mathbf{z}
    \end{bmatrix}
    \begin{bmatrix}
      \psi_{11} \\ \vdots \\ \psi_{d 1} \\ \psi_{12} \\ \vdots \\ \psi_{d N}
    \end{bmatrix}.
  \end{equation*}
  Let
  \begin{equation*}
    \widetilde \Psi
  = \begin{bmatrix}
    \psi_{1 1} & \cdots & \psi_{1 N} \\
    \psi_{2 1} & \cdots & \psi_{2 N} \\
    \vdots  & \ddots & \vdots \\
    \psi_{d 1} & \cdots & \psi_{d N}
  \end{bmatrix},
  \end{equation*}
  so that
  \begin{equation*}
    \Phi^T =
    \begin{bmatrix}
      \varphi_1 & \cdots & \varphi_N
    \end{bmatrix}
    = \mathbf{z} \widetilde \Psi.
  \end{equation*}

  To apply the inductive hypothesis, let us write $F = E \cup \{0\}$, where $|E| = n-1$.
  Notice that $\Psi \big|_{E}$ is in particular a contractive column multiplier of $H^2_d \big|_{E}$.
  By induction hypothesis, the irreducible complete Pick space
  $H^2_d \big|_{E}$ satisfies the column-row property with constant $1$.
  Proposition \ref{prop:column_to_matrix} therefore shows that the matrix $\widetilde \Psi \big|_{E}$ is
  a contractive $B(\mathbb{C}^N, \mathbb{C}^d)$-valued multiplier of $H^2_d \big|_{E}$.
  Lemma \ref{lem:Schur_complement} (b) now implies that $\Phi^T = \mathbf{z} \widetilde \Psi$
  is a contractive row multiplier of $H^2_d \big|_F$, as desired.
\end{proof}

Let us illustrate the proof above with a simple example in which the row norm is strictly
smaller than the column norm.

\begin{exa} Let $d=2$ and consider
  \begin{equation*}
    \Phi
    = \frac{1}{\sqrt{2}}
    \begin{bmatrix}
      z_1 \\ z_2
    \end{bmatrix}.
  \end{equation*}
  Then $\|\Phi\|_{\Mult(H^2_2, H^2_d \otimes \mathbb{C}^2)} = 1$, whereas
  $\|\Phi^T\|_{\Mult(H^2_2 \otimes \mathbb{C}^2, H^2_2)} = \frac{1}{\sqrt{2}}$.
  Indeed, it is a fundamental property of the Drury--Arveson space that
  the coordinate functions form a row contraction, so by the basic $\sqrt{n}$-bound
  mentioned in the introduction,
  \begin{equation*}
    \|\Phi\|_{\Mult} \le \sqrt{2} \|\Phi^T\|_{\Mult} \le 1,
  \end{equation*}
  and applying $\Phi$ to the constant function $1 \in H^2_2$ shows that
  $\|\Phi\|_{\Mult} \ge 1$, so equality holds throughout.

  Notice that $\Phi(0) = 0$, so we may apply Proposition \ref{prop:factorization}.
  In this case, a factorization is simply
  \begin{equation*}
    \Phi =
    \begin{bmatrix}
      z_1 & z_2 & 0 & 0 \\
      0 & 0 & z_1 & z_2
    \end{bmatrix}
    \begin{bmatrix}
      \frac{1}{\sqrt{2}} \\ 0 \\ 0 \\ \frac{1}{\sqrt{2}}
    \end{bmatrix}.
  \end{equation*}
  Proceeding as in the above proof, we see that
  \begin{equation*}
    \Phi^T =
    \begin{bmatrix}
      z_1 & z_2
    \end{bmatrix}
    \begin{bmatrix}
      \frac{1}{\sqrt{2}} & 0 \\
      0 & \frac{1}{\sqrt{2}}
    \end{bmatrix}.
  \end{equation*}
  Observe that the square matrix in the
  factorization of $\Phi^T$ has norm $\frac{1}{\sqrt{2}}$, whereas the
  column in the factorization of $\Phi$ has norm $1$.
  Thus, we can see the decrease in multiplier norm in the factorization in this case.
  In this simple example, there is no need to restrict
  to finite subsets of the ball, as the multiplier becomes constant after one step of the Schur
  algorithm. Reversing the steps above, we also see where the argument breaks down
  when going from rows to columns.
%
\end{exa}

Combining Theorem \ref{thm:main} with Proposition \ref{prop:column_to_matrix},
it follows immediately that complete Pick spaces in fact satisfy the ``column-matrix property''.

\begin{cor}
  \label{cor:column-matrix}
  Let $\mathcal{H}$ be a normalized complete Pick space and let
  let $\psi_{i j} \in \Mult(\cH)$ for $1 \le i \le M$ and $1 \le j \le N$. Then
  \begin{equation*}
    \left\|
  \begin{bmatrix}
    \psi_{1 1} & \cdots & \psi_{1 N} \\
    \psi_{2 1} & \cdots & \psi_{2 N} \\
    \vdots  & \ddots & \vdots \\
    \psi_{M 1} & \cdots & \psi_{M N}
  \end{bmatrix}
     \right\|_{\Mult(\cH \otimes \bC^N, \cH \otimes \bC^M)}
    \le
    \left\|
    \begin{bmatrix}
      \psi_{11} \\ \vdots \\ \psi_{M 1} \\ \psi_{12} \\ \vdots \\ \psi_{M N}
    \end{bmatrix}
    \right\|_{\Mult(\cH, \cH \otimes \bC^{N M})}. \qed
  \end{equation*}
\end{cor}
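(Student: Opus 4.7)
The plan is to observe that this corollary is an immediate consequence of the two results that precede it. Proposition \ref{prop:column_to_matrix} already establishes exactly the desired inequality, but under the standing hypothesis that $\mathcal{H}$ is an irreducible complete Pick space satisfying the column-row property with constant $1$. Theorem \ref{thm:main} supplies precisely this missing hypothesis for every normalized complete Pick space (and normalized spaces are irreducible by definition, as noted in Subsection \ref{ss:CNP}).

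Concretely, the argument I would write has just one step: given multipliers $\psi_{ij} \in \Mult(\mathcal{H})$ for $1 \le i \le M$, $1 \le j \le N$, invoke Theorem \ref{thm:main} to conclude that $\mathcal{H}$ satisfies the column-row property with constant $1$, and then apply Proposition \ref{prop:column_to_matrix} to the $\psi_{ij}$ to obtain the stated inequality between the multiplier norm of the $M \times N$ matrix and the multiplier norm of the column of length $MN$.

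There is no real obstacle here, since all the work has already been carried out: Proposition \ref{prop:column_to_matrix} was proved in the preceding subsection by using the column-row property to bound the norm of a block-diagonal row-of-rows multiplier, followed by an application of Lemma \ref{lem:JM}. The corollary's role is simply to package that implication with the newly established fact that the column-row hypothesis is automatic for normalized complete Pick spaces.
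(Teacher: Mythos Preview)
Your proposal is correct and matches the paper's own treatment exactly: the corollary is stated with a \qed and the surrounding text simply says it follows immediately by combining Theorem \ref{thm:main} with Proposition \ref{prop:column_to_matrix}. There is nothing to add.
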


\subsection{Spaces with a complete Pick factor}
\label{ss:CNP_factor}

In recent years, several results about complete Pick spaces have been generalized to spaces
whose reproducing kernel has a complete Pick factor; see for instance \cite{AHM+17c,CHS20} and \cite[Section 4]{AHM+17}.
We show that the column-row property for complete Pick spaces generalizes in a similar fashion.

Throughout this subsection, we will assume the following setting. Let $\mathcal{H}_K$ and $\mathcal{H}_S$ be two reproducing kernel Hilbert spaces on $X$
with reproducing kernels $K$ and $S$, respectively. Assume that $\mathcal H_S$ is a normalized complete Pick space and that $K/S \ge 0$.
Then the positive kernel $K/S$ may be factored as
\begin{equation*}
  (K/S)(z,w) = G(z) G(w)^*,
\end{equation*}
where $G: X \to B(\mathcal{G},\mathbb{C})$ for some auxiliary Hilbert space $\mathcal{G}$.

A basic example of this setting occurs when $\mathcal{H}_S$ is the Hardy space and $\mathcal{H}_K$ is the Bergman space on the disc.
In the references cited above,
results about $\Mult(\mathcal{H}_S)$ were generalized to $\Mult(\mathcal{H}_S,\mathcal{H}_K)$.
The following lemma makes it possible to carry out this generalization for the column-row property;
it is essentially a vector valued version of \cite[Proposition 4.10]{AHM+17}.

\begin{lem}
  \label{lem:pair_factor}
  Assume the setting above and let $\mathcal{E},\mathcal{F}$ be Hilbert spaces.
  The following are equivalent for a function $\Phi: X \to \mathcal{B}(\mathcal{E},\mathcal{F})$:
  \begin{enumerate}[label=\normalfont{(\roman*)}]
    \item The function $\Phi$ is a contractive multiplier from $\mathcal{H}_S \otimes \mathcal{E}$ to $\mathcal{H}_K \otimes \mathcal{F}$.
    \item There exists a contractive multiplier $\Psi$ from $\mathcal{H}_S \otimes \mathcal{E}$ to $\mathcal{H}_S \otimes \mathcal{G} \otimes \mathcal{F}$
      so that $\Phi = (G \otimes I_{\mathcal{F}}) \Psi$.
  \end{enumerate}
\end{lem}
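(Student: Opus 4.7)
The plan is to prove the two implications separately, in both cases relying on the kernel characterization of contractive multipliers (Lemma \ref{lem:multipliers_basic}) and the identity $K(z,w) = S(z,w) G(z) G(w)^*$.

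For the direction (ii) $\Rightarrow$ (i), suppose $\Psi$ is a contractive multiplier from $\mathcal{H}_S \otimes \mathcal{E}$ to $\mathcal{H}_S \otimes \mathcal{G} \otimes \mathcal{F}$. Lemma \ref{lem:multipliers_basic} yields
\[
  S(z,w)\bigl(I_{\mathcal{G} \otimes \mathcal{F}} - \Psi(z)\Psi(w)^*\bigr) \ge 0.
\]
I would apply Lemma \ref{lem:positive_conjugation} with the conjugating function $f(z) = G(z) \otimes I_{\mathcal{F}} : \mathcal{G} \otimes \mathcal{F} \to \mathcal{F}$. Using the identity $(G(z) \otimes I_{\mathcal{F}})(G(w) \otimes I_{\mathcal{F}})^* = G(z)G(w)^* I_{\mathcal{F}}$ (since $G(z)G(w)^*$ is scalar), $K = S \cdot G G^*$, and $\Phi = (G \otimes I_{\mathcal{F}}) \Psi$, this rewrites as
\[
  K(z,w) I_{\mathcal{F}} - S(z,w) \Phi(z)\Phi(w)^* \ge 0,
\]
which via Lemma \ref{lem:multipliers_basic} is exactly the statement that $\Phi$ is a contractive multiplier from $\mathcal{H}_S \otimes \mathcal{E}$ to $\mathcal{H}_K \otimes \mathcal{F}$.

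For the harder direction (i) $\Rightarrow$ (ii), I would run the above calculation in reverse. Starting from the kernel positivity supplied by Lemma \ref{lem:multipliers_basic} and substituting $K(z,w) = S(z,w) G(z)G(w)^*$, we obtain
\[
  S(z,w)\bigl[(G(z) \otimes I_{\mathcal{F}})(G(w) \otimes I_{\mathcal{F}})^* - \Phi(z)\Phi(w)^*\bigr] \ge 0.
\]
This is precisely the hypothesis of Leech's theorem for the normalized complete Pick space $\mathcal{H}_S$, in the vector-valued form used in the proof of Proposition \ref{prop:factorization} (\cite[Theorem 8.57]{AM02}), with $A(z) = G(z) \otimes I_{\mathcal{F}}$ viewed as a function $X \to B(\mathcal{G} \otimes \mathcal{F}, \mathcal{F})$ and $B(z) = \Phi(z)$. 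Leech's theorem then produces a contractive multiplier $\Psi \in \Mult_1(\mathcal{H}_S \otimes \mathcal{E}, \mathcal{H}_S \otimes \mathcal{G} \otimes \mathcal{F})$ satisfying $\Phi = (G \otimes I_{\mathcal{F}}) \Psi$, which is the content of (ii).

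There is no real technical obstacle; the proof is essentially a translation of the kernel inequality for $\Phi$ into the kernel inequality needed to invoke Leech. The only thing to be careful about is the bookkeeping of Hilbert space factors, in particular the identification of the ``column'' $G(z) \otimes I_{\mathcal{F}}$ as a bounded operator from $\mathcal{G} \otimes \mathcal{F}$ onto $\mathbb{C} \otimes \mathcal{F} \cong \mathcal{F}$ so that the factorization $\Phi = (G \otimes I_{\mathcal{F}}) \Psi$ makes sense at the level of operator-valued functions on $X$.
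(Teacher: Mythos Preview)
Your proposal is correct and follows essentially the same route as the paper: both directions hinge on the identity $K(z,w)I_{\mathcal{F}} = S(z,w)(G(z)\otimes I_{\mathcal{F}})(G(w)\otimes I_{\mathcal{F}})^*$, with (ii)$\Rightarrow$(i) reducing to the fact that $G$ is a contractive multiplier from $\mathcal{H}_S\otimes\mathcal{G}$ to $\mathcal{H}_K$ (your kernel computation makes this explicit), and (i)$\Rightarrow$(ii) being an application of Leech's theorem for the complete Pick kernel $S$. Your version is slightly more detailed in the easy direction, but the argument is the same.
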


\begin{proof}
  (ii) $\Rightarrow$ (i) This is immediate from the fact that $G$ is a contractive multiplier
  from $\mathcal{H}_S \otimes \mathcal{G}$ to $\mathcal{H}_K$, which in turn follows from Lemma \ref{lem:multipliers_basic}.

  (i) $\Rightarrow$ (ii) Let $\widetilde G(z) = G(z) \otimes I_{\mathcal{F}}$. The definition of $G$ and Lemma \ref{lem:multipliers_basic} show that
  \begin{equation*}
    S(z,w) (\widetilde G(z) \widetilde G(w) -  \Phi(z) \Phi(w)^*)
    = K(z,w) I_{\mathcal{F}} - S(z,w) \Phi(z) \Phi(w)^* \ge 0
  \end{equation*}
  as a function of $(z,w)$. Since $\mathcal{H}_S$ is a complete Pick space, Leech's theorem
  \cite[Theorem 8.57]{AM02} yields the desired multiplier $\Psi$ of $\mathcal{H}_S$.
\end{proof}

We are now ready to generalize the column-row property and also the column-matrix property to pairs of spaces.
The column-row property corresponds to the case $M=1$ below.

\begin{thm}
  \label{thm:column_matrix_pairs}
  Assume the setting of Subsection \ref{ss:CNP_factor}.
  Let $\varphi_{i j} \in \Mult(\cH_S,\mathcal{H}_K)$ for $1 \le i \le M$ and $1 \le j \le N$. Then
  \begin{equation*}
    \left\|
  \begin{bmatrix}
    \varphi_{1 1} & \cdots & \varphi_{1 N} \\
    \varphi_{2 1} & \cdots & \varphi_{2 N} \\
    \vdots  & \ddots & \vdots \\
    \varphi_{M 1} & \cdots & \varphi_{M N}
  \end{bmatrix}
     \right\|_{\Mult(\cH_S \otimes \bC^N, \cH_K \otimes \bC^M)}
    \le
    \left\|
    \begin{bmatrix}
      \varphi_{11} \\ \vdots \\ \varphi_{M 1} \\ \varphi_{12} \\ \vdots \\ \varphi_{M N}
    \end{bmatrix}
    \right\|_{\Mult(\cH_S, \cH_K \otimes \bC^{N M})}.
  \end{equation*}
\end{thm}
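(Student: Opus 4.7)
The strategy is to apply Lemma \ref{lem:pair_factor} to reduce the statement to an operator-valued column-to-matrix inequality inside the complete Pick space $\mathcal{H}_S$ alone, and then to repeat the argument of Proposition \ref{prop:column_to_matrix}, which remains valid with $\mathcal{G}$-valued entries since these still commute multiplicatively with the scalar-valued multipliers of $\mathcal{H}_S$.

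After rescaling, I may assume that the column norm on the right-hand side equals $1$. Applying Lemma \ref{lem:pair_factor} with $\mathcal{E} = \mathbb{C}$ and $\mathcal{F} = \mathbb{C}^{NM}$ to the column arrangement of the $\varphi_{ij}$ produces a contractive $\Psi \in \Mult_1(\mathcal{H}_S, \mathcal{H}_S \otimes \mathcal{G} \otimes \mathbb{C}^{NM})$ such that the column is recovered as $(G \otimes I_{\mathbb{C}^{NM}}) \Psi$. Using the identification $\mathcal{G} \otimes \mathbb{C}^{NM} \cong \mathcal{G}^{NM}$, I may write $\Psi$ as a column of $\mathcal{G}$-valued multipliers $\Psi_{ij}$ of $\mathcal{H}_S$ satisfying $\varphi_{ij} = G \Psi_{ij}$. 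Re-assembling these into the $M \times N$ matrix $\widetilde\Psi$, which is a multiplier from $\mathcal{H}_S \otimes \mathbb{C}^N$ to $\mathcal{H}_S \otimes \mathcal{G} \otimes \mathbb{C}^M$, one factors the original scalar matrix as $(G \otimes I_{\mathbb{C}^M}) \widetilde\Psi$. Because $G$ is a contractive multiplier from $\mathcal{H}_S \otimes \mathcal{G}$ to $\mathcal{H}_K$, the tensored multiplier $G \otimes I_{\mathbb{C}^M}$ is also contractive, so it suffices to prove $\|\widetilde\Psi\|_{\Mult} \le 1$.

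I would establish this by mimicking the proof of Proposition \ref{prop:column_to_matrix}. For an arbitrary column $\Phi = (\varphi_1, \ldots, \varphi_N)^T \in \Mult_1(\mathcal{H}_S, \mathcal{H}_S \otimes \mathbb{C}^N)$, commutativity between the scalar $\varphi_k$ and the $\mathcal{G}$-valued $\Psi_{ij}$ yields
\[
  \widetilde\Psi \, \Phi = \mathbf{D} \, \Psi',
\]
where $\mathbf{D}$ is the $M \times MN$ block diagonal scalar multiplier whose diagonal blocks are all equal to the row $[\varphi_1 \ \cdots \ \varphi_N]$, and $\Psi'$ is a reordering of the entries of $\Psi$ (so $\|\Psi'\|_{\Mult} = \|\Psi\|_{\Mult} \le 1$). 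By Theorem \ref{thm:main}, the row $[\varphi_1 \ \cdots \ \varphi_N]$ is a contractive multiplier of $\mathcal{H}_S$, whence $\mathbf{D}$ is contractive; amplification by $I_{\mathcal{G}}$ preserves this, so $\|\widetilde\Psi \Phi\|_{\Mult} \le 1$. Taking the supremum over such $\Phi$ and invoking the straightforward extension of Lemma \ref{lem:JM} in which the target Hilbert space is allowed to carry an additional tensor factor---its proof uses the Jury--Martin factorization of \cite{JM18} only on the domain and goes through verbatim---yields $\|\widetilde\Psi\|_{\Mult} \le 1$.

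The main difficulty is bookkeeping: keeping the tensor factors $\mathcal{G}$, $\mathbb{C}^N$, $\mathbb{C}^M$, $\mathbb{C}^{NM}$ straight across the reshapings, and verifying that the $\mathcal{G}$-valued analogues of Proposition \ref{prop:column_to_matrix} and Lemma \ref{lem:JM} really do go through without change. Conceptually, the proof merely combines Lemma \ref{lem:pair_factor} (which strips off $\mathcal{H}_K$ via Leech's theorem in $\mathcal{H}_S$) with the column-row property of $\mathcal{H}_S$ supplied by Theorem \ref{thm:main}.
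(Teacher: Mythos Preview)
Your proposal is correct and follows essentially the same approach as the paper: both apply Lemma \ref{lem:pair_factor} to factor each $\varphi_{ij}$ as $G\Psi_{ij}$ with $\Psi$ a contractive column multiplier of $\mathcal{H}_S$, and then show that the $M\times N$ matrix of the $\mathcal{G}$-valued $\Psi_{ij}$ is contractive. The only difference lies in that final step: the paper avoids re-proving anything by approximating each $\Psi_{ij}$ by a finite column of scalar multipliers and applying Corollary \ref{cor:column-matrix} directly, whereas you re-run the proof of Proposition \ref{prop:column_to_matrix} (and the supporting Lemma \ref{lem:JM}) in the $\mathcal{G}$-valued setting. Both work; the paper's route is shorter since it recycles Corollary \ref{cor:column-matrix} as a black box, while yours is more self-contained but requires checking that the auxiliary lemmas extend, which they do for the reasons you state.
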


\begin{proof}
  Let $\Phi$ be the column of the $\varphi_{ij}$ on the right, and assume that $\Phi$ has multiplier norm $1$.
  The implication (i) $\Rightarrow$ (ii) of Lemma \ref{lem:pair_factor} yields a contractive multiplier $\Psi$
  from $\mathcal{H}_S$ to $\mathcal{H}_S \otimes \mathcal{G} \otimes \mathbb{C}^{MN}$ so that
  $\Phi = (G \otimes I_{MN}) \Psi$. Write $\Psi$ as a column of multipliers $\Psi_{ij}$ from $\mathcal{H}_S$
  to $\mathcal{H}_S \otimes \mathcal{G}$ so that $\varphi_{ij} = G \Psi_{i j}$ for all $1 \le i \le M, 1 \le j \le N$.
  Then
  \begin{equation*}
  \begin{bmatrix}
    \varphi_{1 1} & \cdots & \varphi_{1 N} \\
    \varphi_{2 1} & \cdots & \varphi_{2 N} \\
    \vdots  & \ddots & \vdots \\
    \varphi_{M 1} & \cdots & \varphi_{M N}
  \end{bmatrix}
  =
  \begin{bmatrix}
    G & 0 & \cdots & 0 \\
    0 & G & \cdots & 0 \\
    \vdots & \ddots & \ddots & \vdots \\
    0 & 0 & \cdots & G
  \end{bmatrix}
  \begin{bmatrix}
    \Psi_{1 1} & \cdots & \Psi_{1 N} \\
    \Psi_{2 1} & \cdots & \Psi_{2 N} \\
    \vdots  & \ddots & \vdots \\
    \Psi_{M 1} & \cdots & \Psi_{M N}
  \end{bmatrix}.
  \end{equation*}
  Since $\Psi$ is a contractive multiplier of $\mathcal{H}_S$, the column-matrix property
  of $\mathcal{H}_S$ (Corollary \ref{cor:column-matrix}), combined with an obvious approximation argument
  to approximate each $\Psi_{ij}$ by a finite column, shows that the matrix on the right is a contractive
  multiplier of $\mathcal{H}_S$. Thus, the trivial direction (ii) $\Rightarrow$ (i) of Lemma \ref{lem:pair_factor}
  shows that the matrix on the left is a contractive multiplier from $\mathcal{H}_S$ to $\mathcal{H}_K$.
\end{proof}

Note that in the proof above, we used the column-matrix property of $\mathcal{H}_S$ even in the case $M=1$,
i.e. when only showing the column-row property for the pair $(\mathcal{H}_S,\mathcal{H}_K)$.

\section{Further applications}
\label{sec:applications}

\subsection{Weak product spaces}

We already observed in the introduction that combining the main result, Theorem \ref{thm:main}, with known results
in the literature yields several results about weak product spaces.
We now collect a few more of these consequences.

The \emph{Smirnov class} of a normalized complete Pick space $\mathcal{H}$ is defined to be
\begin{equation*}
  N^+(\mathcal{H}) = \Big\{ \frac{\varphi}{\eta}: \varphi, \eta \in \Mult(\mathcal{H}), \eta \text{ cyclic} \Big\}.
\end{equation*}
Recall that $\eta \in \Mult(\mathcal{H})$ is said to be \emph{cyclic} if the multiplication operator $M_{\eta}$
on $\mathcal{H}$ has dense range. In \cite{AHM+17a}, it was shown that $\mathcal{H} \subset N^+(\mathcal{H})$
for any normalized complete Pick space $\mathcal{H}$.
Moreover, \cite[Corollary 3.4]{AHM+18} shows that if $\mathcal{H}$ satisfies
the column-row property, then $\mathcal{H} \odot \mathcal{H} \subset N^+(\mathcal{H})$.
Thus, in combination with Theorem \ref{thm:main}, we obtain this inclusion for all normalized complete Pick spaces.
Notice that since $N^+(\mathcal{H})$ is an algebra, the inclusion
$\mathcal{H} \odot \mathcal{H} \subset N^+(\mathcal{H})$ also follows from the description
$\mathcal{H} \odot \mathcal{H} = \{f \cdot g: f, g \in \mathcal{H}\}$ of Theorem \ref{thm:wp_factorization}.
In fact, \cite[Theorem 3.3]{AHM+18} yields more precise information.
To put the next result into perspective, it is useful to recall that if $\psi \in \Mult(\mathcal{H})$
with $\psi \neq 1$, then $1 - \psi$ and $(1- \psi)^2$ are cyclic, see \cite[Lemma 2.3]{AHM+17a}.

\begin{thm}
  Let $\mathcal{H}$ be a complete Pick space that is normalized at $z_0$
  and let $h \in \mathcal{H} \odot \mathcal{H}$
  with $\|h\|_{\mathcal{H} \odot \mathcal{H}} \le 1$. Then there exist $\varphi,\psi \in \Mult(\mathcal{H})$
  with $\|\varphi\|_{\Mult(\mathcal{H})} \le 1, \|\psi\|_{\Mult(\mathcal{H})} \le 1$, $\psi(z_0) = 0$
  such that
  \begin{equation*}
    h = \frac{\varphi}{(1 - \psi)^2}.
  \end{equation*}
\end{thm}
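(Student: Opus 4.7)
The plan is to invoke \cite[Theorem 3.3]{AHM+18}, which establishes precisely this representation under the hypothesis that $\mathcal{H}$ is a normalized complete Pick space satisfying the column-row property \emph{with constant $1$}. Now that Theorem \ref{thm:main} verifies this hypothesis for every normalized complete Pick space, the conclusion follows at once. This is also why the parenthetical remark preceding the statement highlights the cyclicity of $(1-\psi)^2$: it is the missing ingredient that makes the right-hand side an a priori well-defined element of $N^+(\mathcal{H})$.

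To sketch the proof of the cited theorem: starting from $\|h\|_{\mathcal{H}\odot\mathcal{H}} \le 1$, I would first apply Theorem \ref{thm:wp_factorization} (itself a consequence of Theorem \ref{thm:main}) to factor $h = fg$ with $f,g \in \mathcal{H}$ and $\|f\|\,\|g\| \le 1$; after rescaling, $\|f\|, \|g\| \le 1$. Next, I would invoke the Smirnov representation of \cite{AHM+17a} to write
\begin{equation*}
  f = \frac{\varphi_1}{1-\psi_1}, \qquad g = \frac{\varphi_2}{1-\psi_2},
\end{equation*}
with $\varphi_i,\psi_i \in \Mult_1(\mathcal{H})$ and $\psi_i(z_0) = 0$, yielding $h = \varphi_1\varphi_2/((1-\psi_1)(1-\psi_2))$.

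The main obstacle is the last step: consolidating the two distinct denominators into a single factor $(1-\psi)^2$ with $\psi \in \Mult_1(\mathcal{H})$, $\psi(z_0)=0$, while keeping the resulting numerator $\varphi := h(1-\psi)^2$ in the closed unit ball of $\Mult(\mathcal{H})$. A natural candidate is the convex combination $\psi = (\psi_1 + \psi_2)/2$, which evidently satisfies $\|\psi\|_{\Mult(\mathcal{H})} \le 1$ and $\psi(z_0) = 0$; the content then lies in showing that the multiplier
\begin{equation*}
  \varphi = \varphi_1 \varphi_2 \cdot \frac{(1-\psi)^2}{(1-\psi_1)(1-\psi_2)}
\end{equation*}
is indeed contractive. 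I expect this verification to rely crucially on the column-row constant being \emph{exactly} $1$ rather than merely finite: any slack would propagate through the weak-product factorization and the Smirnov representation into a strictly larger bound on $\|\varphi\|_{\Mult(\mathcal{H})}$, breaking the stated conclusion. This is consistent with the remark made earlier in the introduction after Theorem \ref{thm:extreme_point}, where the sharpness of the constant $1$ was emphasized as essential for several applications.
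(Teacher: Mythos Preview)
Your high-level strategy---invoke \cite[Theorem 3.3]{AHM+18} together with Theorem \ref{thm:main}---matches the paper, but there is a small misquotation you should fix. As stated, \cite[Theorem 3.3]{AHM+18} does \emph{not} yield $\varphi \in \Mult(\mathcal{H})$ with $\|\varphi\|_{\Mult(\mathcal{H})} \le 1$; it only yields $\varphi \in \Mult(\mathcal{H}\odot\mathcal{H})$ with $\|\varphi\|_{\Mult(\mathcal{H}\odot\mathcal{H})} \le 1$. The paper closes this gap in two ways: the primary route applies Theorem \ref{thm:wp_mult} (the isometric identification $\Mult(\mathcal{H}\odot\mathcal{H}) = \Mult(\mathcal{H})$, itself a consequence of Theorem \ref{thm:main}); the alternative, which is closer to what you intend, is to observe that if one re-runs the \emph{proof} of \cite[Lemma 3.2 and Theorem 3.3]{AHM+18} with the column-row constant equal to $1$, the stronger conclusion $\varphi \in \Mult_1(\mathcal{H})$ emerges directly. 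Either route completes the argument; your first paragraph, once this correction is made, suffices.

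Your second and third paragraphs are a speculative reconstruction of how \cite[Theorem 3.3]{AHM+18} might be proved, and---as you yourself acknowledge---they contain an unresolved step (consolidating the two denominators into $(1-\psi)^2$ while keeping $\varphi$ contractive). This sketch is not needed for the present proof, and the specific route you outline (factor via Theorem \ref{thm:wp_factorization}, then apply Smirnov to each factor, then take $\psi = (\psi_1+\psi_2)/2$) is not how the cited argument actually goes; you should drop it rather than leave an admitted gap in the write-up.
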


\begin{proof}
  This is the statement of \cite[Theorem 3.3]{AHM+18}, the only difference being that in \cite[Theorem 3.3]{AHM+18},
  one has $\varphi \in \Mult(\mathcal{H} \odot \mathcal{H})$ with $\|\varphi\|_{\Mult(\mathcal{H} \odot \mathcal{H})}\le 1$. Thus, the result follows, for instance, from the equality $\Mult(\mathcal{H} \odot \mathcal{H}) = \Mult(\mathcal{H})$ of Theorem \ref{thm:wp_mult}.

  Alternatively, examination of the proof of Lemma 3.2 and Theorem 3.3 of \cite{AHM+18} shows
  that if $\mathcal{H}$ satisfies the column-row property with constant $1$, then one
  obtains that $\varphi \in \Mult(\mathcal{H})$ with $\|\varphi\|_{\Mult(\mathcal{H})} \le 1$,
  so the result also follows directly with the help of Theorem \ref{thm:main} independently of the results of \cite{CHar}.
\end{proof}

Nehari's theorem shows that the dual space of $H^1$ is the space of all symbols of bounded Hankel operators
on $H^2$. This can be generalized to weak product spaces of normalized complete Pick spaces $\mathcal{H}$.
In \cite[Subsection 2.2]{AHM+18}, a space $\Han(\mathcal{H})$ of symbols of bounded Hankel
operators on $\mathcal{H}$ is defined, and it is shown that the dual space of $\mathcal{H} \odot \mathcal{H}$
is isomorphic via a conjugate linear isometry to $\Han(\mathcal{H})$.
The definition of $\Han(\mathcal{H})$ is somewhat involved.
More concrete is the space $\mathcal{X}(\mathcal{H})$ of all those $b \in \mathcal{H}$
for which the densely defined bilinear form
\begin{equation*}
  \mathcal{H} \times \mathcal{H} \to \mathbb{C}, \quad
  (\varphi, f) \mapsto \langle \varphi f , b \rangle \quad (\varphi \in \Mult(\mathcal{H}), f \in \mathcal{H}),
\end{equation*}
is bounded, equipped with the norm of the bilinear form.
(Recall that the kernel functions of a normalized complete Pick space are multipliers,
hence $\Mult(\mathcal{H})$ is densely contained in $\mathcal{H}$.)
Explicitly, $b \in \mathcal{X}(\mathcal{H})$ if and only if there exists $C \ge 0$
so that
\begin{equation*}
  |\langle \varphi f ,b \rangle | \le C \|\varphi\|_{\mathcal{H}} \|f\|_{\mathcal{H}}
  \quad \text{ for all} \quad
\varphi \in \Mult(\mathcal{H}), f \in \mathcal{H}.
\end{equation*}
In \cite[Theorem 2.6]{AHM+18}, it is shown that in the presence
of the column-row property, one has $\Han(\mathcal{H}) = \mathcal{X}(H)$.
Thus, in combination with Theorem \ref{thm:main}, we obtain the following
version of Nehari's theorem.

\begin{thm}
  Let $\mathcal{H}$ be a normalized complete Pick space. Then there is a conjugate
  linear isometric isomorphism $(\mathcal{H} \odot \mathcal{H})^* = \mathcal{X}(\mathcal{H})$.
  On the dense subspace $\mathcal{H}$ of $\mathcal{H} \odot \mathcal{H}$,
  the action of an element $b \in \mathcal{X}(\mathcal{H})$ on $f \in \mathcal{H}$
  is given by $\langle f,b \rangle_{\mathcal{H}}$. \qed
\end{thm}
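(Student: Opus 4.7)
The plan is to deduce this as a direct corollary of already-referenced results. The excerpt cites two pieces of machinery from \cite{AHM+18}: first, there is an unconditional conjugate linear isometric isomorphism $(\mathcal{H}\odot\mathcal{H})^{*}\cong \Han(\mathcal{H})$ established in \cite[Subsection 2.2]{AHM+18}, where the pairing is given on the dense subspace $\mathcal{H}\subset\mathcal{H}\odot\mathcal{H}$ by the inner product with the Hankel symbol; second, \cite[Theorem 2.6]{AHM+18} identifies $\Han(\mathcal{H}) = \mathcal{X}(\mathcal{H})$ \emph{conditional on the column-row property} for $\mathcal{H}$.

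The plan is therefore simply to remove the hypothesis. By Theorem \ref{thm:main}, every normalized complete Pick space satisfies the column-row property with constant $1$, so the hypothesis of \cite[Theorem 2.6]{AHM+18} is satisfied automatically. Composing the two isomorphisms, we obtain a conjugate linear isometric identification $(\mathcal{H}\odot\mathcal{H})^{*}\cong \mathcal{X}(\mathcal{H})$, as desired.

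To verify the description of the pairing on the dense subspace $\mathcal{H}\subset\mathcal{H}\odot\mathcal{H}$, I would recall that the isomorphism $(\mathcal{H}\odot\mathcal{H})^{*}\cong\Han(\mathcal{H})$ of \cite{AHM+18} sends a bounded linear functional $\Lambda$ to the unique element $b\in\Han(\mathcal{H})$ such that $\Lambda(f) = \langle f,b\rangle_{\mathcal{H}}$ for $f$ in the dense subspace $\mathcal{H}$, and that the identification $\Han(\mathcal{H}) = \mathcal{X}(\mathcal{H})$ in \cite[Theorem 2.6]{AHM+18} is the identity map on the underlying elements of $\mathcal{H}$. Thus the composite pairing is the stated one.

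There is no genuine obstacle here: all the nontrivial analysis is carried out in \cite{AHM+18}, and the only input needed beyond that is Theorem \ref{thm:main}, which has already been established. The proof is a one-line invocation of these results.
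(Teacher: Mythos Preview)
Your proposal is correct and matches the paper's approach exactly: the theorem is stated with a \qed\ and no explicit proof, the surrounding text making clear it follows by combining the isometric identification $(\mathcal{H}\odot\mathcal{H})^*\cong\Han(\mathcal{H})$ from \cite[Subsection 2.2]{AHM+18} with \cite[Theorem 2.6]{AHM+18} (which gives $\Han(\mathcal{H})=\mathcal{X}(\mathcal{H})$ under the column-row property) and Theorem~\ref{thm:main}. Your added verification of the pairing on $\mathcal{H}$ is a reasonable elaboration of what the paper leaves implicit.
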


If $b \in \Han(\mathcal{H}) = \mathcal{X}(\mathcal{H})$, then the associated Hankel operator $H_b$
is the unique bounded linear operator $H_b: \mathcal{H} \to \overline{\mathcal{H}}$ satisfying
\begin{equation*}
  \langle H_b f, \overline{\varphi} \rangle_{\overline{\mathcal{H}}}
  = \langle \varphi f,b \rangle_{\mathcal{H}} \quad (\varphi \in \Mult(\mathcal{H}), f \in \mathcal{H}),
\end{equation*}
see \cite[Subsection 2.2]{AHM+18}.
Here $\overline{\mathcal{H}}$ denotes the conjugate Hilbert space.
It is easy to see that the kernel of every Hankel operator is multiplier invariant, i.e.\
invariant under $M_\varphi$ for all $\varphi \in \Mult(\mathcal{H})$.
In \cite[Corollary 3.8]{AHM+18}, it was shown that conversely,
every closed multiplier invariant subspace of $\mathcal{H}$ is an intersection of kernels of Hankel operators,
provided that $\mathcal{H}$ satisfies the column-row property. Thus, we obtain the following consequence.

\begin{thm}
  Let $\mathcal{H}$ be a normalized complete Pick space and let $\mathcal{M} \subset \mathcal{H}$ be a closed
  multiplier invariant subspace. Then there exists a sequence $(b_n)$ in $\mathcal{X}(\mathcal{H})$ so that
  \begin{equation*}
    \pushQED{\qed}
    \mathcal{M} = \bigcap_n \ker H_{b_n}. \qedhere
    \popQED
  \end{equation*} 
\end{thm}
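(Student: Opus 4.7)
The plan is essentially to cite and combine, with no new work needed. The statement is formally identical to \cite[Corollary 3.8]{AHM+18}, except that the conclusion there is phrased in terms of the more abstract Hankel symbol space $\Han(\mathcal{H})$, and the hypothesis there requires the column-row property as an assumption on $\mathcal{H}$.

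First I would invoke Theorem \ref{thm:main}, which removes the column-row hypothesis: every normalized complete Pick space automatically satisfies the column-row property with constant $1$, so the hypotheses of \cite[Corollary 3.8]{AHM+18} are met. Applying that corollary to the given closed multiplier invariant subspace $\mathcal{M}$ yields a sequence $(b_n)$ in $\Han(\mathcal{H})$ so that $\mathcal{M} = \bigcap_n \ker H_{b_n}$.

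Second, I would translate the membership $b_n \in \Han(\mathcal{H})$ into membership $b_n \in \mathcal{X}(\mathcal{H})$ using the Nehari-type identification $\Han(\mathcal{H}) = \mathcal{X}(\mathcal{H})$. This identification is exactly what was recorded in the preceding theorem of this subsection (which relies on \cite[Theorem 2.6]{AHM+18} together with Theorem \ref{thm:main}); it is the only non-cosmetic ingredient beyond the corollary itself.

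There is no real obstacle here: both the multiplier-invariant decomposition and the Nehari identification have already been set up, and the role of Theorem \ref{thm:main} is purely to validate the column-row hypothesis that they share. The proof should therefore be one short paragraph consisting of the two citations above, concluded by the identification $\Han(\mathcal{H}) = \mathcal{X}(\mathcal{H})$ to land the sequence $(b_n)$ in the correct space.
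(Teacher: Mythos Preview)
Your proposal is correct and matches the paper's approach exactly: the paper states this theorem with an inline \qed, deriving it from \cite[Corollary 3.8]{AHM+18} together with Theorem \ref{thm:main}, with the identification $\Han(\mathcal{H}) = \mathcal{X}(\mathcal{H})$ already established in the preceding discussion.
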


\subsection{Interpolating sequences}

Let $\mathcal{H}$ be a normalized complete Pick space on $X$ with reproducing kernel $K$.
A sequence $(z_n)$ in $X$ is said to
\begin{itemize}
  \item[(IS)] be
    an \emph{interpolating sequence} for $\Mult(\mathcal{H})$ if
the evaluation map
\begin{equation*}
  \Mult(\mathcal{H}) \to \ell^\infty, \quad \varphi \mapsto (\varphi(z_n)),
\end{equation*}
is surjective;
\item[(C)]
  satisfy the \emph{Carleson measure condition} if there exists $C \ge 0$ so that
\begin{equation*}
  \sum_{n} \frac{|f(z_n)|^2}{K(z_n,z_n)} \le C \|f\|^2_{\mathcal{H}}
  \quad \text{ for all } f \in \mathcal{H};
\end{equation*}
\item[(WS)] be \emph{weakly separated} if there exists $\varepsilon > 0$ so that that for all $n \neq m$, there exists $\varphi \in \Mult_1(\mathcal{H})$ with $\varphi(z_n) = \varepsilon$
and $\varphi(z_m) = 0$.
\end{itemize}
The weak separation condition can be rephrased in terms of a (pseudo-)metric derived from the reproducing kernel $K$.
For this translation and background on interpolating sequences, see \cite[Chapter 9]{AM02}.
Carleson showed that for $\mathcal{H} = H^2$
a sequence satisfies (IS) if and only if it satisfies (C) and (WS).
This result was extended to all normalized complete Pick spaces in \cite{AHM+17}, using
the solution of the Kadison--Singer problem due to Marcus, Spielman and Srivastava \cite{MSS15}.
In \cite[Remark 3.7]{AHM+17}, it was observed that if $\mathcal{H}$ satisfies the column-row
property, then a simpler proof is possible. For the convenience of the reader,
we present the relevant part of the argument in which the column-row property enters.

\begin{thm}
  In every normalized complete Pick space $\mathcal{H}$, the equivalence
  (IS) $\Leftrightarrow$ (C) + (WS) holds. In this case, there exists a bounded linear right-inverse
  of the evaluation map $\Mult(\mathcal{H}) \to \ell^\infty$.
\end{thm}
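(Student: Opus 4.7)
The plan is to handle the two directions separately. The direction (IS) $\Rightarrow$ (C) + (WS) is classical and does not use the column-row property: (WS) is immediate, since interpolating the sequence $(\delta_{n,m})_n \in \ell^\infty$ for each fixed $m$ yields the required separating multiplier; and (C) follows from a Shapiro--Shields style argument, combining the surjectivity of the evaluation map with the open mapping theorem to produce a bounded right-inverse, then testing it against the appropriate $\ell^\infty$-sequences to recover the Carleson measure condition. So the substance is in the reverse implication, and it is here that Theorem \ref{thm:main} enters.

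For the reverse direction (C) + (WS) $\Rightarrow$ (IS) with a bounded right-inverse, I would proceed in four stages. First, for each finite set $F = \{z_1,\ldots,z_N\}$ of interpolation nodes, use (WS) together with the complete Pick property (i.e.\ finite Pick interpolation on CNP spaces) to produce multipliers $\rho_n^{(F)} \in \Mult(\mathcal{H})$ satisfying $\rho_n^{(F)}(z_m) = \delta_{nm}$ for $n,m \in F$. Second, use the Carleson measure condition (C) to show that, after an appropriate normalization, the column
\begin{equation*}
  \begin{bmatrix} M_{\rho_1^{(F)}} \\ \vdots \\ M_{\rho_N^{(F)}} \end{bmatrix}
\end{equation*}
has multiplier norm bounded by some constant $C$ independent of $F$. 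This step uses Lemma \ref{lem:multipliers_basic} to reduce the column multiplier norm to a positivity condition on a kernel, which is in turn controlled by the Carleson embedding of $\mathcal{H}$ into $\ell^2$.

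Third, apply Theorem \ref{thm:main} (with constant $1$) to conclude that the corresponding row $[\rho_1^{(F)}, \ldots, \rho_N^{(F)}]$ is a bounded row multiplier of $\mathcal{H}$, with the same constant $C$. Fourth, let $F$ increase to $\mathbb{N}$ and extract weak-$*$ limits (using weak-$*$ compactness of the closed unit ball in $\Mult(\mathcal{H})$ together with a diagonal argument) to obtain an infinite sequence $(\rho_n)$ with $\rho_n(z_m) = \delta_{nm}$ and whose row defines a bounded multiplier of $\mathcal{H} \otimes \ell^2 \to \mathcal{H}$.

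Finally, I would construct the bounded linear right-inverse $T : \ell^\infty \to \Mult(\mathcal{H})$. Given $a \in \ell^\infty$, I would set $T(a) = \sum_n a_n \rho_n$, with convergence and boundedness interpreted via the row multiplier. Here the clean approach is to exploit Corollary \ref{cor:column-matrix}: the diagonal operator $\mathrm{diag}(a)$ on $\ell^2$ has norm $\|a\|_\infty$, so composing the row multiplier with it gives again a bounded multiplier, and Lemma \ref{lem:multipliers_basic} then translates the resulting kernel positivity into the statement that $T(a)$ is a multiplier of norm at most $C \|a\|_\infty$. The identity $T(a)(z_n) = a_n$ is automatic from the construction. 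The main obstacle I anticipate is this last step: the row estimate is naturally an $\ell^2$-type estimate, and upgrading it to an $\ell^\infty$-type extension in $\Mult(\mathcal{H})$ (rather than just in $\mathcal{H}$) is the delicate point, which is precisely why one needs the constant $1$ in Theorem \ref{thm:main} rather than a larger constant.
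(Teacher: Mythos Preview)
Your overall outline is close to the paper's, but there is a genuine gap in the final step, and one misconception.

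The paper does not build the column multiplier by finite approximation and weak-$*$ limits; it simply invokes the Agler--\mcc\ theorem \cite[Theorem 9.46(c)]{AM02}, which from (C) + (WS) directly produces a bounded column $\Phi \in \Mult(\mathcal{H}, \mathcal{H} \otimes \ell^2)$ with $\Phi(z_n) = e_n$. Your stages 1--4 are essentially an attempt to reprove this result, and your description of stage 2 (``controlled by the Carleson embedding'') glosses over the actual argument, which requires both (WS) and (C) in a non-trivial way.

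The real gap is your right-inverse. You propose $T(a) = \sum_n a_n \rho_n$, but a row bound on $(\rho_n)$ only tells you that $\sum_n \rho_n g_n \in \mathcal{H}$ for $(g_n) \in \mathcal{H} \otimes \ell^2$; applying it with $g_n = a_n f$ requires $a \in \ell^2$, not $a \in \ell^\infty$. Composing the row with $\Delta(a)$ just gives another row $[a_1 \rho_1, a_2 \rho_2, \ldots]$, not a scalar multiplier, so your proposed fix via Corollary \ref{cor:column-matrix} does not close the gap. The paper's device is to use \emph{both} the column and the row: one sets
\[
  T(w) = \Phi^T \Delta(w) \Phi,
\]
which is a composition of three bounded multiplication operators $\mathcal{H} \to \mathcal{H} \otimes \ell^2 \to \mathcal{H} \otimes \ell^2 \to \mathcal{H}$ and hence a bounded scalar multiplier. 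Pointwise this is $\sum_n w_n \rho_n(z)^2$, and since $\rho_n(z_m) = \delta_{nm}$ one still has $T(w)(z_m) = w_m$.

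Finally, your last sentence is mistaken: the constant $1$ plays no role here. Any finite column-row constant $c$ would give $\|T(w)\|_{\Mult} \le c \|\Phi\|^2_{\Mult} \|w\|_\infty$, which is all that is needed. The place in the paper where constant $1$ is essential is Theorem \ref{thm:extreme_point}, not the interpolation theorem.
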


\begin{proof}
  It is well known that every interpolating sequence is weakly separated and satisfies
  the Carleson measure condition; see \cite[Chapter 9]{AM02}.

  Conversely, suppose that $(z_n)$ is weakly separated and satisfies the Carleson measure condition.
  A theorem of Agler and \mcc\ \cite[Theorem 9.46 (c)]{AM02} shows that
  there exists a bounded column multiplier $\Phi \in \Mult(\mathcal{H}, \mathcal{H} \otimes \ell^2)$
  so that $\Phi(z_n) = e_n$, the $n$-th standard basis vector of $\ell^2$, for all $n \in \mathbb{N}$.
  Theorem \ref{thm:main} implies that the transposed multiplier $\Phi^T \in \Mult(\mathcal{H} \otimes \ell^2, \mathcal{H})$ is bounded as well. Let $\Delta: \ell^\infty \to B(\ell^2)$ be the embedding via diagonal operators and
  define
  \begin{equation*}
    T: \ell^\infty \to \Mult(\mathcal{H}), \quad w \mapsto \Phi^T \Delta(w) \Phi.
  \end{equation*}
  Observe that if $w \in \ell^\infty$, then $T(w)$ is indeed a multiplication operator, and that
  \begin{equation*}
    T(w)(z_n) = e_n^T \Delta(w) e_n = w_n
  \end{equation*}
  for all $n \in \mathbb{N}$. Thus, $(z_n)$ satisfies (IS), and $T$ is the desired right-inverse of the evaluation map.
\end{proof}

\subsection{de Branges--Rovnyak spaces and extreme points}

In this subsection, we prove Theorem \ref{thm:extreme_point}.
We require the following special case of a result of Jury and Martin \cite{JuryPC,Jury17}
regarding extreme points of the unit ball of an operator algebra.
The proof below is a simplification of their proof.

\begin{lem}[Jury--Martin]
  \label{lem:JM_extreme}
  Let $\mathcal{H}$ be a Hilbert space, let $A,B \in B(\mathcal{H})$ and suppose that
  \begin{equation*}
    \Big\|
  \begin{bmatrix}
    B \\ A
  \end{bmatrix} \Big\| \le 1
  \quad \text{ and } \quad
  \|
  \begin{bmatrix}
    B & A
  \end{bmatrix} \| \le 1.
  \end{equation*}
  Then $\|B \pm \frac{1}{2} A^2 \| \le 1$.
\end{lem}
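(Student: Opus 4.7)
The plan is to prove both estimates $\|B \pm \frac{1}{2}A^2\| \le 1$ simultaneously by controlling the matrix coefficients $|\langle (B \pm \frac{1}{2} A^2)x, y\rangle|$ for arbitrary unit vectors $x, y \in \mathcal{H}$, exploiting the row and column hypotheses in a coordinated way together with an elementary scalar inequality.

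First, I would translate the hypotheses into pointwise estimates. The column condition is equivalent to $B^*B + A^*A \le I$, yielding $\|Ax\|^2 \le 1 - \|Bx\|^2$ for every unit $x$, while the row condition is equivalent to $BB^* + AA^* \le I$, yielding $\|A^*y\|^2 \le 1 - \|B^*y\|^2$ for every unit $y$. In particular $\|B\| \le 1$, so the quantities $a := \|Bx\|$ and $b := \|B^*y\|$ both lie in $[0,1]$. Moreover, Cauchy--Schwarz gives $|\langle Bx, y\rangle| \le \min(a,b)$.

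The key estimate is then obtained by rewriting $\langle A^2 x, y\rangle = \langle Ax, A^* y\rangle$ and applying Cauchy--Schwarz:
\begin{equation*}
\bigl|\bigl\langle (B \pm \tfrac{1}{2}A^2)x, y\bigr\rangle\bigr|
\le |\langle Bx, y\rangle| + \tfrac{1}{2}\|Ax\|\,\|A^*y\|
\le \min(a,b) + \tfrac{1}{2}\sqrt{(1-a^2)(1-b^2)}.
\end{equation*}
The decisive point is that the splitting $\langle Ax, A^*y\rangle$ lets me invoke the column condition on the first factor and the row condition on the second, so that both hypotheses are used in a symmetric fashion.

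It then suffices to verify the scalar inequality $\min(a,b) + \frac{1}{2}\sqrt{(1-a^2)(1-b^2)} \le 1$ for $a,b \in [0,1]$. Assuming without loss of generality $a \le b$, I would use the crude bound $\sqrt{(1-a^2)(1-b^2)} \le 1-a^2$, after which the left-hand side is at most $a + \frac{1}{2}(1-a^2) = 1 - \frac{1}{2}(1-a)^2 \le 1$. I do not anticipate a serious obstacle: the delicate aspect is recognizing that splitting $\langle A^2 x, y\rangle$ as an inner product of $Ax$ with $A^*y$ is exactly what allows both hypotheses to enter, and that the resulting scalar inequality is tight precisely at the constant $\frac{1}{2}$.
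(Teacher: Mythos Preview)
Your argument is correct. The translation of the column and row hypotheses into the inequalities $\|Ax\|^2 \le 1 - \|Bx\|^2$ and $\|A^*y\|^2 \le 1 - \|B^*y\|^2$ is standard, the splitting $\langle A^2 x,y\rangle = \langle Ax, A^*y\rangle$ is exactly the right move, and the scalar inequality $\min(a,b) + \tfrac{1}{2}\sqrt{(1-a^2)(1-b^2)} \le 1$ is verified correctly via $a + \tfrac{1}{2}(1-a^2) = 1 - \tfrac{1}{2}(1-a)^2$.

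The paper's proof is genuinely different and more algebraic: rather than estimating sesquilinear forms, it exhibits $B \pm \tfrac{1}{2}A^2$ as a product of four explicit contractions. Concretely, one writes down block operators
\[
  X = \begin{bmatrix} 1 & 0 & 0 \\ 0 & B & A \end{bmatrix}, \qquad
  Y = \begin{bmatrix} 0 & B \\ 1 & 0 \\ 0 & \pm A \end{bmatrix},
\]
each of which is contractive by the row (respectively column) hypothesis, and then compresses $XY$ by the unit vectors $\tfrac{1}{\sqrt{2}}\begin{bmatrix}1 & 1\end{bmatrix}$ on each side to obtain $B \pm \tfrac{1}{2}A^2$. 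Your approach has the advantage of making the role of the constant $\tfrac{1}{2}$ completely transparent through the scalar inequality, and requires no clever block-matrix bookkeeping. The paper's factorization, on the other hand, is coordinate-free and would carry over verbatim to more abstract settings (for instance, $A$ and $B$ in an arbitrary operator algebra), whereas your estimate is tied to the Hilbert-space inner product.
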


\begin{proof}
  Since the column and the row have norm at most one, so do
  \begin{equation*}
    X =
    \begin{bmatrix}
      1 & 0 & 0 \\
      0 & B & A
    \end{bmatrix}
    \quad
    \text{ and }
    \quad
    Y =
    \begin{bmatrix}
      0 & B \\
      1 & 0 \\
      0 &  \pm A
    \end{bmatrix}.
  \end{equation*}
  Hence
  \begin{equation*}
    \begin{bmatrix}
      \frac{1}{\sqrt{2}} & \frac{1}{\sqrt{2}}
    \end{bmatrix}
    X Y
    \begin{bmatrix}
      \frac{1}{\sqrt{2}} \\ \frac{1}{\sqrt{2}}
    \end{bmatrix}
    = \frac{1}{2}
    \begin{bmatrix}
      1 & 1
    \end{bmatrix}
    \begin{bmatrix}
      0 & B \\
      B & \pm A^2
    \end{bmatrix}
    \begin{bmatrix}
      1 \\ 1
    \end{bmatrix}
    = B \pm \frac{1}{2} A^2
  \end{equation*}
  has norm at most $1$ as well.
\end{proof}

We can now prove Theorem \ref{thm:extreme_point}, which we restate for the reader's convenience.
Recall that a multiplier $b$ of a reproducing kernel Hilbert space $\mathcal{H}$
of norm at most $1$ is said to be column extreme if there does not exist $a \in \Mult(\mathcal{H}) \setminus \{0\}$ so that
\begin{equation*}
  \Big\|
  \begin{bmatrix}
    b \\ a
  \end{bmatrix} \Big\|_{\Mult(\mathcal{H},\mathcal{H} \otimes \mathbb{C}^2)} \le 1.
\end{equation*}

\begin{thm}
  Let $\mathcal{H}$ be a normalized complete Pick space and let $b$ belong to the closed unit ball of $\Mult(\mathcal{H})$.
  Then $b$ is an extreme point of the closed unit ball of $\Mult(\mathcal{H})$ if and only if $b$ is column extreme.
\end{thm}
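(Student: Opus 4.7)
The plan is to prove both directions by contrapositive, which cleanly splits the proof into an elementary parallelogram-law computation for one implication and a substantive argument invoking Theorem~\ref{thm:main} and Lemma~\ref{lem:JM_extreme} for the other.

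For the direction ``column extreme $\Rightarrow$ extreme'', I would assume $b$ is not extreme, so $b = \tfrac{1}{2}(b_1+b_2)$ with $b_1, b_2$ distinct elements of the closed unit ball of $\Mult(\mathcal{H})$. Setting $a = \tfrac{1}{2}(b_1 - b_2) \in \Mult(\mathcal{H}) \setminus \{0\}$, the parallelogram law applied to $M_{b_1} f \pm M_{b_2} f$ yields
\begin{equation*}
  \|M_b f\|^2 + \|M_a f\|^2 = \tfrac{1}{2}\bigl(\|M_{b_1} f\|^2 + \|M_{b_2} f\|^2\bigr) \le \|f\|^2
\end{equation*}
for every $f \in \mathcal{H}$. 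This gives $M_b^* M_b + M_a^* M_a \le I$, i.e., $[b;a]$ is a contractive column multiplier with $a \neq 0$, so $b$ is not column extreme.

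For the direction ``extreme $\Rightarrow$ column extreme'', I would assume instead that there exists $a \in \Mult(\mathcal{H}) \setminus \{0\}$ with
\begin{equation*}
  \Big\|
  \begin{bmatrix}
    M_b \\ M_a
  \end{bmatrix}
  \Big\| \le 1.
\end{equation*}
Invoking Theorem~\ref{thm:main} produces the row bound $\bigl\| \begin{bmatrix} M_b & M_a \end{bmatrix} \bigr\| \le 1$, and then Lemma~\ref{lem:JM_extreme} delivers $\|M_b \pm \tfrac{1}{2} M_a^2\| \le 1$, so $b \pm \tfrac{1}{2} a^2$ are both contractive multipliers. Since $a$ is a non-zero multiplier, it is non-zero as a scalar function on $X$, hence $a^2$ is also non-zero. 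Therefore
\begin{equation*}
  b = \tfrac{1}{2}\bigl[(b + \tfrac{1}{2} a^2) + (b - \tfrac{1}{2} a^2)\bigr]
\end{equation*}
realizes $b$ as the midpoint of two distinct contractive multipliers, so $b$ is not extreme.

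The main obstacle, and indeed the whole point of the application, is that Lemma~\ref{lem:JM_extreme} demands \emph{both} a contractive column and a contractive row. Theorem~\ref{thm:main} with constant precisely $1$ is exactly the tool needed to promote the column hypothesis to the required row bound; a column-row constant strictly greater than $1$ would break the $\|M_b \pm \tfrac{1}{2} M_a^2\| \le 1$ conclusion and thus the entire extreme-point argument. This is why the sharp constant in the main theorem is essential here.
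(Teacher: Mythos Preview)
Your proof is correct and follows essentially the same approach as the paper. Both directions are argued by contrapositive; the hard direction invokes Theorem~\ref{thm:main} and then Lemma~\ref{lem:JM_extreme} exactly as the paper does, and your parallelogram-law computation for the easy direction is just a direct reformulation of the paper's unitary-matrix factorization (both encode the same identity $\|M_b f\|^2 + \|M_a f\|^2 = \tfrac{1}{2}(\|M_{b+a} f\|^2 + \|M_{b-a} f\|^2)$).
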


\begin{proof}
  The ``if'' part was already shown by Jury and Martin \cite{JM16}.
  We provide a variant of their argument in the spirit of the proof of Lemma \ref{lem:JM_extreme}.
  Suppose that $b$ is not an extreme point of the closed unit ball of $\Mult(\mathcal{H})$. Then there exists $a \in \Mult(\mathcal{H}) \setminus \{0\}$
  so that $\|b \pm a\|_{\Mult(\mathcal{H})} \le 1$. Therefore,
  \begin{equation*}
    \begin{bmatrix}
      b \\ a
    \end{bmatrix}
    =
    \begin{bmatrix}
    \frac{1}{\sqrt{2}} & 
    \frac{1}{\sqrt{2}} \\
    \frac{1}{\sqrt{2}} &
    - \frac{1}{\sqrt{2}}
    \end{bmatrix}
    \begin{bmatrix}
      b + a & 0 \\ 0 & b - a
    \end{bmatrix}
    \begin{bmatrix}
      \frac{1}{\sqrt{2}} \\ \frac{1}{\sqrt{2}}
    \end{bmatrix}
  \end{equation*}
  has multiplier norm at most $1$, because the scalar square matrix is unitary. Hence $b$ is not column extreme.

  Conversely, suppose that $b$ is not column extreme.
  By definition, there exists $a \in \Mult(\mathcal{H}) \setminus \{0\}$ so that
  \begin{equation*}
    \Big\|
    \begin{bmatrix}
      b \\ a
    \end{bmatrix} \Big\|_{\Mult(\mathcal{H},\mathcal{H} \otimes \mathbb{C}^2)} \le 1.
  \end{equation*}
  Theorem \ref{thm:main} implies that
  \begin{equation*}
    \|
    \begin{bmatrix}
      b & a
    \end{bmatrix} \|_{\Mult(\mathcal{H} \otimes \mathbb{C}^2, \mathcal{H})} \le 1,
  \end{equation*}
  hence $\|b \pm \frac{1}{2} a^2\|_{\Mult(\mathcal{H})} \le 1$ by Lemma \ref{lem:JM_extreme}.
  Since $\Mult(\mathcal{H})$ is an algebra of functions, it does not contain
  any non-zero nilpotent elements, so $a^2 \neq 0$. Thus, $b$ is not an extreme point of the closed unit ball
of $\Mult(\mathcal{H})$.
\end{proof}

\section{Counterexamples and questions}

\subsection{Failure of the complete column-row property}

We briefly discuss how Theorem \ref{thm:main} (the column-row property) and Corollary \ref{cor:column-matrix} (the column-matrix property)
can be interpreted in the theory of operator spaces and how a natural generalization of these results fails.

Let $N \in \mathbb{N}$ and let
\begin{equation*}
  \mathcal{M}^C = \Mult(\mathcal{H}, \mathcal{H} \otimes \mathbb{C}^N)
  \quad \text{ and } \quad
  \mathcal{M}^R = \Mult(\mathcal{H} \otimes \mathbb{C}^N, \mathcal{H})
\end{equation*}
be the space of column, respectively row, multipliers with $N$ components, both equipped
with the multiplier norm.
(One could also allow infinite columns and rows, but we restrict to columns and rows of a fixed finite length
for simplicity.)
Theorem \ref{thm:main} shows that the transpose mapping $T: \mathcal{M}^C \to \mathcal{M}^R$ is contractive.

Identifying a multiplier with its multiplication operator, we see that
$\mathcal{M}^C$ and $\mathcal{M}^R$ are in fact concrete operator spaces.
In particular, for each $n,m \in \mathbb{N}$, there is a natural norm
on $M_{n,m}(\mathcal{M}^C)$ and $M_{n,m}(\mathcal{M}^R)$, via
the identifications
\begin{equation*}
  M_{n,m}(\mathcal{M}^C)= \Mult(\mathcal{H} \otimes \mathbb{C}^m, \mathcal{H} \otimes \mathbb{C}^N \otimes \mathbb{C}^n)
\end{equation*}
and
\begin{equation*}
  M_{n,m}(\mathcal{M}^R)= \Mult(\mathcal{H} \otimes \mathbb{C}^N \otimes \mathbb{C}^m, \mathcal{H} \otimes \mathbb{C}^n).
\end{equation*}
It is not hard to see that Corollary \ref{cor:column-matrix} is equivalent to the assertion that for each $n \in \mathbb{N}$, the induced map
\begin{equation*}
  T^{(n,1)} : M_{n,1}(\mathcal{M}^C) \to M_{n,1}(\mathcal{M}^R),
\end{equation*}
defined by applying $T$ entrywise, is contractive.

But if $N \ge 2$, it is very easy to see that $T$ is not completely contractive, i.e.\ that
\begin{equation*}
  T^{(n,m)}: M_{n,m}(\mathcal{M}^C) \to M_{n,m}(\mathcal{M}^R)
\end{equation*}
is not contractive for all $n,m \in \mathbb{N}$.
This is simply because of the failure of complete contractivity of the ordinary transpose map.
Indeed, suppose for simplicity that $N=2$ and consider the element
\begin{equation*}
  \begin{bmatrix}
    \begin{bmatrix}
      1 \\ 0
    \end{bmatrix}
    &
    \begin{bmatrix}
      0 \\ 1
    \end{bmatrix}
  \end{bmatrix} \in M_{1,2}(\mathcal{M}^C),
\end{equation*}
which has norm $1$, but
\begin{equation*}
  T^{(1,2)} \Big(
  \begin{bmatrix}
    \begin{bmatrix}
      1 \\ 0
    \end{bmatrix}
    &
    \begin{bmatrix}
      0 \\ 1
    \end{bmatrix}
  \end{bmatrix}
\Big) =
\begin{bmatrix}
  \begin{bmatrix}
    1 & 0
  \end{bmatrix}
  &
  \begin{bmatrix}
    0 & 1
  \end{bmatrix}
\end{bmatrix} \in M_{1,2}(\mathcal{M}^R),
\end{equation*}
which has norm $\sqrt{2}$.

\subsection{Failure of the column-row property in other reproducing kernel Hilbert spaces}
\label{ss:other_RKHS}

While Theorem \ref{thm:main} applies to complete Pick spaces, there are other reproducing kernel Hilbert
spaces that satisfy the column-row poperty. For instance, subspaces of $L^2$-spaces, such as (weighted) Bergman spaces or the Hardy space
on the ball or the polydisc,
trivially satisfy the column-row property with constant $1$.
Nevertheless, there are counterexamples.
We construct a family of rotationally invariant spaces of holomorphic functions on the unit disc that do not satisfy the column-row property with constant $1$.

Let $\alpha > 1$ and for $n \ge 0$, define
\begin{equation*}
  a_n =
  \begin{cases}
    1 & \text{ if } n \neq 2 \\
    \frac{1}{\alpha} & \text{ if } n = 2.
  \end{cases}
\end{equation*}
Let $\mathcal{H}$ be the reproducing kernel Hilbert space on $\mathbb{D}$ with kernel
\begin{equation}
  \label{eqn:kernel}
  K(z,w) = \sum_{n=0}^\infty a_n (z \overline{w})^n.
\end{equation}
It is well known that the monomials $(z^n)_{n=0}^\infty$ form an orthogonal basis of $\mathcal{H}$ with
\begin{equation*}
  \|z^n\|^2 = \frac{1}{a_n} \quad (n \in \mathbb{N});
\end{equation*}
see, for example, \cite[Section 2.1]{CM95}.
Thus, $\mathcal{H} = H^2$, with equivalent but not equal norms.

We will show that the row multiplier norm of the pair $(z,z^2)$ exceeds the column multiplier norm.
Using the fact the monomials form an orthogonal basis, it is easy to check that
\begin{equation*}
  \Big\|
  \begin{bmatrix}
    z \\ z^2
  \end{bmatrix} \Big\|^2_{\Mult}
  = \sup_{n \in \mathbb{N}}
  \frac{1}{\|z^n\|^2}
  \Big\|
  \begin{bmatrix}
    z \\ z^2
  \end{bmatrix} z^n \Big\|^2
  = \sup_{n \in \mathbb{N}} \frac{a_n}{a_{n+1}} + \frac{a_n}{a_{n+2}} = 1 + \alpha,
\end{equation*}
where we used that $\alpha > 1$ in the last step. On the other hand,
\begin{equation*}
  \|
  \begin{bmatrix}
    z & z^2
  \end{bmatrix} \|_{\Mult}^2
  \ge \frac{
    \Big\|
  \begin{bmatrix}
    z & z^2
  \end{bmatrix}
  \begin{bmatrix}
    z \\1
  \end{bmatrix} \Big\|^2
  }{ \Big\|
\begin{bmatrix}
  z \\ 1
\end{bmatrix} \Big\|^2}
= \frac{1}{2} \|2 z^2\|^2 = 2 \alpha.
\end{equation*}
Therefore,
\begin{equation*}
  \frac{
  \|
  \begin{bmatrix}
    z & z^2
  \end{bmatrix} \|_{\Mult}^2
}{
  \Big\|
  \begin{bmatrix}
    z \\ z^2
\end{bmatrix} \Big\|^2_{\Mult}} \ge \frac{2\alpha}{1 + \alpha} > 1.
\end{equation*}
Morever, observe that the last ratio tends to $2$ as $\alpha \to \infty$, which corresponds to the basic $\sqrt{n}$-bound
mentioned in introduction.

It is not hard to check directly that $\mathcal{H}$ is not a complete Pick space.
Indeed, if the reproducing kernel $K$ is of the form \eqref{eqn:kernel}, then $\mathcal{H}$ is a complete Pick space
if and only if the coefficients $(b_n)_{n=1}^\infty$ defined by the power series identity
\begin{equation*}
  \sum_{n=1}^\infty b_n t^n = 1- \frac{1}{\sum_{n=0}^\infty a_n t^n}
\end{equation*}
are all non-negative; see \cite[Theorem 7.33]{AM02}. A small computation shows that
\begin{equation*}
  b_2 = a_2 - a_1^2 = \frac{1}{\alpha} - 1 < 0,
\end{equation*}
so $\mathcal{H}$ is not a complete Pick space.

This example can be easily modified to obtain, for each constant $c > 1$, a reproducing
kernel Hilbert space $\mathcal{H}$ on the unit disc that does not satisfy the column-row property
with constant $c$. Explicitly, one replaces the condition $n=2$ in the definition of $(a_n)$ with the condition
$n=k$ for some large natural number $k$, and compares the column and row norms of the tuple $(z,z^2,\ldots,z^k)$.
Moreover, by taking a direct sum of such examples, one obtains a single reproducing kernel Hilbert space
(on the disjoint union of countably many copies of the unit disc) that admits sequences of multipliers
that induce bounded column, but unbounded row multiplication operators. We omit the details.

\subsection{Questions}

While most of the known applications of the column-row property occur within the realm of complete Pick spaces, one may still ask the following
natural question.

\begin{quest}
  Which reproducing kernel Hilbert spaces satisfy the column-row property (or the column-matrix property)?
\end{quest}

As mentioned in Subsection \ref{ss:other_RKHS}, weighted Bergman spaces and the Hardy space on the ball or the polydisc
satsify the column-row property, but are typically not complete Pick spaces.

For a concrete example, let $\mathcal{H}_a$ be the reproducing kernel Hilbert space on $\mathbb{B}_d$ with kernel
$(1 - \langle z,w \rangle)^{-a}$. Then $\mathcal{H}_a$ is a complete Pick space for $0 < a \le 1$,
and $\Mult(\mathcal{H}_a) = H^\infty(\mathbb{B}_d)$ completely isometrically for $a \ge d$, so $\mathcal{H}_a$
satisfies the column-row property for $a \in (0,1] \cup [d, \infty)$.

\begin{quest}
Does $\mathcal{H}_a$ satisfy the column row property for $a \in (1,d)$?
\end{quest}

Naturally, one can define the column-row property not just in the context of multiplier algebras, but more generally for any operator space.
By slight abuse of terminology, we therefore ask:
\enlargethispage{\baselineskip}

\begin{quest}
  Which operator spaces satisfy the column-row property?
\end{quest}

\bibliographystyle{amsplain}
\bibliography{literature}

\end{document}